\theoremstyle{plain}
\newtheorem{thm}{Theorem}[section]
\newtheorem*{thm*}{Theorem}
\newtheorem*{cor*}{Corollary}
\newtheorem{prop}[thm]{Proposition}
\newtheorem{lem}[thm]{Lemma}
\newtheorem{cor}[thm]{Corollary}
\newtheorem*{claim*}{Claim}
\theoremstyle{definition}
\newtheorem{defn}[thm]{Definition}
\newtheorem{ex}[thm]{Example}
\newtheorem{remark}[thm]{Remark}
\newtheorem*{case*}{Case}
\newtheorem{setting}[thm]{Setting}
\newtheorem{prob}[thm]{Problem}
\theoremstyle{remark}
\numberwithin{equation}{thm}
\newtheorem*{ac}{Acknowledgments}
\def\GCD{\operatorname{GCD}}
\def\Ext{\operatorname{Ext}}
\def\Im{\operatorname{Im}}
\def\Ker{\operatorname{Ker}}
\def\bbZ{\mathbb{Z}}
\def\Hom{\operatorname{Hom}}
\def\mod{\mathrm{mod}}
\def\m{\mathfrak m}
\def\n{\mathfrak n}
\newcommand{\rma}{\mathrm{a}}
\newcommand{\rmc}{\mathrm{c}}
\newcommand{\rme}{\mathrm{e}}
\newcommand{\rmf}{\mathrm{f}}
\newcommand{\rmo}{\mathrm{o}}
\newcommand{\rmr}{\mathrm{r}}
\newcommand{\rmH}{\mathrm{H}}
\newcommand{\rmI}{\mathrm{I}}
\newcommand{\calX}{\mathcal{X}}
\newcommand{\fkc}{\mathfrak{c}}
\newcommand{\mapright}[1]{%
\smash{\mathop{%
\hbox to 1cm{\rightarrowfill}}\limits^{#1}}}
\newcommand{\mapleft}[1]{%
\smash{\mathop{%
\hbox to 1cm{\leftarrowfill}}\limits_{#1}}}
\def\AGL{\operatorname{AGL}}
\def\ch{\operatorname{ch}}
\def\gr{\mbox{\rm gr}}
\title[Ulrich ideals in numerical semigroup rings of small multiplicity]
{Ulrich ideals in numerical semigroup rings \\ of small multiplicity}
\author[Naoki Endo]{Naoki Endo}
\address{Department of Mathematics, Faculty of Science, Tokyo University of Science, 1-3 Kagurazaka, Shinjuku, Tokyo 162-8601, Japan}
\email{nendo@rs.tus.ac.jp}
\urladdr{https://www.rs.tus.ac.jp/nendo/}
\author[Shiro Goto]{Shiro Goto}
\address{Department of Mathematics, School of Science and Technology, Meiji University, 1-1-1 Higashi-mita, Tama-ku, Kawasaki 214-8571, Japan}
\email{shirogoto@gmail.com}
\thanks{2020 {\em Mathematics Subject Classification.} 13A15, 13H15, 13H10.}
\thanks{{\em Key words and phrases.} Ulrich ideal, totally reflexive module, numerical semigroup,  semigroup ring, Golod ring}
\thanks{The first author was partially supported by JSPS Grant-in-Aid for Young Scientists 20K14299. The second author was partially supported by JSPS Grant-in-Aid for Scientific Research (C) 21K03211. }
\begin{document}

\maketitle

\setlength{\baselineskip} {15.3pt}

\begin{abstract}
Ulrich ideals in numerical semigroup rings of small multiplicity are studied. If the semigroups are three-generated but not symmetric, the semigroup rings are Golod, since the Betti numbers of the residue class fields of the semigroup rings form an arithmetic progression; therefore, these semigroup rings are G-regular (\cite{greg}), possessing no Ulrich ideals. Nevertheless, even in the three-generated case, the situation is different, when the semigroups are symmetric. We shall explore this phenomenon, describing an explicit system of generators, that is the normal form of generators,  for the Ulrich ideals in  the numerical semigroup rings of multiplicity at most 3. As the multiplicity is greater than $3$, in general the task of determining all the Ulrich ideals seems formidable, which we shall experience, analyzing one of the  simplest examples of semigroup rings of multiplicity $4$.
\end{abstract}


{\footnotesize \tableofcontents}

\section{Introduction}\label{intro}

This paper aims at the  study of Ulrich ideals in numerical semigroup rings of small multiplicity.

\vspace{0.3em}

Let $H$ be a numerical semigroup and let $k[[H]]$ be the semigroup ring over a field $k$. Let $\calX_{k[[H]]}$ denote the set of Ulrich ideals in $k[[H]]$. We then naturally attain the following. 
\begin{prob}\label{p}
Determine the set $\calX_{k[[H]]}$.
\end{prob}
\noindent
The present purpose is to report a few partial solutions for Problem \ref{p}, especially in the case where $H$ has small multiplicity $\rme(H)= \min~[H \setminus \{0\}]$.

The notion of Ulrich ideal on which we focus throughout this paper is one of the modifications of that of {\it stable} maximal ideal introduced  in 1971 by his famous paper \cite{L} of J. Lipman. The present modification was formulated by S. Goto, K. Ozeki, R. Takahashi, K.-i. Watanabe, and K.-i. Yoshida \cite{GOTWY} in 2014, where the authors developed the basic theory, revealing that the Ulrich ideals of Cohen-Macaulay local rings enjoy a beautiful structure theorem  for minimal free resolutions.

In order to explain our aim as well as our motivation more precisely, let $(A, \m) $ be a Cohen-Macaulay local ring with $\dim A=d \ge 0$ and $I$ an $\m$-primary ideal of $A$. We throughout assume that $I$ contains a parameter ideal $Q$ of $A$ as a reduction; hence $I^{n+1} = QI^n$ for some $n \ge 0$. This assumption is  naturally satisfied, if the residue class field $A/\m$ is infinite, or if $A$ is analytically irreducible and of dimension one; for example, $A = k[[H]]$,  the semigroup ring of a numerical semigroup $H$ over a field $k$.

\begin{defn} (\cite[Definition 1.1]{GOTWY})\label{2.1}
We say that $I$ is  an {\it Ulrich} ideal of $A$, if the following conditions are satisfied.
\begin{enumerate}[$(1)$]
\item $I \ne Q$,  $I^2=QI$ and
\item $I/I^2$ is a free $A/I$-module.
\end{enumerate}
\end{defn}

\noindent
We notice that Condition $(1)$ of the definition is satisfied if and only if the associated graded ring $\gr_I(A) = \bigoplus_{n\ge 0} I^n/I^{n+1}$ is a Cohen-Macaulay ring with $\rma(\gr_I(A))=1-d$, where $\rma(\gr_I(A))$ denotes the $\rma$-invariant of $\gr_I(A)$ (\cite[Definition 3.1.4]{GW}). Thus, Condition $(1)$ is independent of the choice of reductions $Q$ of $I$. When $I=\m$, Condition $(2)$ is automatically satisfied, while Condition $(1)$ is equivalent to saying that $A$ is not regular but of minimal multiplicity. One finds the general basic results on Ulrich ideals in the fundamental paper \cite{GOTWY}. For example, provided $I, J$ are Ulrich ideals of $A$, $I=J$ if and only if for some $i \ge 0$ $$\operatorname{Syz}_A^i(I) \cong \operatorname{Syz}_A^i(J)$$
as an $A$-module, where $\operatorname{Syz}_A^i(I)$ (resp. $\operatorname{Syz}_A^i(J))$ stands for the $i$-th syzygy module of $I$ (resp. $J$) in a minimal free resolution of the $A$-module $I$ (resp. $J$).

Let $I$ be an $\m$-primary ideal of $A$ and assume that $I^2=QI$. Then, since $Q/QI$ is a free $A/I$-module of rank $d$, the exact sequence
$$
0 \to Q/QI \to I/I^2 \to I/Q \to 0
$$
of $A/I$-modules readily shows that $I/I^2$ is a free $A/I$-module if and only if so is $I/Q$. Therefore, provided that $I$ is minimally generated by $d+1$ elements, the latter condition is equivalent to saying that $I/Q \cong A/I$ as an $A/I$-module, or equivalently $Q:_AI =I$. On the other hand, if $I$ is an Ulrich ideal, by \cite{GOTWY, GTT2} we get the equality 
$$
(\mu_A(I)-d)\cdot\rmr(A/I)= \rmr(A),
$$
where $\mu_A(I)$ (resp. $\rmr(*)$) denotes the number of generators of $I$ (resp. the Cohen-Macaulay type). Therefore
$$d+1 \le \mu_A(I) \le d + \rmr(A),$$
so that when $A$ is a Gorenstein ring, that is the case where $\rmr(A)=1$, every Ulrich ideal $I$ is generated by $d+1$ elements (if it exists), whence $I$ is a {\it good} ideal of $A$ in the sense of \cite{GIW}. As is shown in \cite{GOTWY, GTT2}, all the Ulrich ideals with the extreme number $\mu_A(I)=d+1$ of generators possess finite G-dimension, and their minimal free resolutions have a very restricted form, so that they are eventually periodic of period one.

Let us explain this phenomenon in the case of dimension one. We now assume that $(A, \m)$ is a Cohen-Macaulay local ring with $\dim A=1$ and that $I$ is an Ulrich ideal of $A$. Therefore, $I$ is an $\m$-primary ideal of $A$, and $I^2 = aI$ for some $a \in I$, such that $I \ne (a)$ but $I/(a)$ is a free $A/I$-module.  We assume that $I$ is minimally generated by {\it two} elements, say $I=(a, b)$ with $b \in I$, and write $b^2 = ac$ for some $c \in I$. We then have, since $I/(a) \cong A/I$, that $(a):_Ab = I$, and the minimal free resolution of $I$ has the following form
$$
\ \ \cdots \longrightarrow A^{\oplus 2} \overset{
\begin{pmatrix}
-b & -c\\
a & b
\end{pmatrix}}{\longrightarrow}
A^{\oplus 2} \overset{
\begin{pmatrix}
-b & -c\\
a & b
\end{pmatrix}}{\longrightarrow}A^{\oplus 2} \overset{\begin{pmatrix}
a & b
\end{pmatrix}}{\longrightarrow} I \longrightarrow 0
$$
(\cite[Example 7.3]{GOTWY}). In particular,  $I$ is a {\it totally reflexive} $A$-module, that is $I$ is a  reflexive $A$-module, $\Ext_A^p(I, A) =(0)$, and $\Ext_A^p(\Hom_A(I, A), A) = (0)$ for all $p >0$ (\cite[Proposition 4.6]{GIT}). We then clearly have that $I =J$, once $$\operatorname{Syz}_A^i(I) \cong \operatorname{Syz}_A^i(J)$$ for some $i \ge 0$, provided $I,J$ are Ulrich ideals of $A$.

It seems reasonable to expect that behind the behavior of Ulrich ideals and their existence also, there is hidden some ample information about the structure of the base rings. For example, if $A$ has finite Cohen-Macaulay representation type, then $A$ contains only finitely many Ulrich ideals  (\cite{GOTWY}). In a one-dimensional non-Gorenstein almost Gorenstein local ring, the only possible Ulrich ideal is the maximal ideal (\cite[Theorem 2.14]{GTT2}). In \cite{GIT}  the authors explored the ubiquity of Ulrich ideals in a {\it $2$-AGL} rings (one of the generalizations of Gorenstein local rings of dimension one), and showed that the existence of two-generated Ulrich ideals provides a rather strong restriction on the structure of the base local rings (\cite[Theorem 4.7]{GIT}). Nevertheless, even for the one-dimensional Cohen-Macaulay local rings, in general we lack an explicit and physical list of Ulrich ideals contained inside those rings, which possibly prevents further developments of the study of Ulrich ideals. In order to supply the lack, continuing the work \cite{GIT}, the present research particularly focus on and investigate the question of how many and how ample two-generated Ulrich ideals are contained in a given {\it numerical semigroup ring}, which is  a prototype of Cohen-Macaulay local rings of dimension one.  As we shall  show in the following, although the task is rather tough and the statements of the results are seemingly complicated, we are able to describe {\it all} the Ulrich ideals in certain specific numerical semigroup rings. The list which we will give could enrich the known class of Ulrich ideals, providing numerous concrete examples of totally reflexive modules, as well.

In order to explain how this paper is organized, we turn our attention to the following specific setting. Let $a_1, a_2, \ldots, a_\ell \in \Bbb Z$ be positive integers such that $\mathrm{GCD}~(a_1, a_2, \ldots, a_\ell)=1$. We set
$$
H = \left<a_1, a_2, \ldots, a_\ell\right>=\left\{\sum_{i=1}^\ell c_ia_i ~\middle|~ 0 \le c_i \in \Bbb Z~\text{for~all}~1 \le i \le \ell \right\}
$$
and call it the {\it numerical semigroup} generated by $\{a_i\}_{1 \le i \le \ell}$. The reader may consult the book \cite{RG} for the fundamental results on numerical semigroups. Let $V = k[[t]]$ denote the formal power series ring over a field $k$, and set 
$$
k[[H]] = k[[t^{a_1}, t^{a_2}, \ldots, t^{a_\ell}]] \subseteq V,
$$
which we call the {\it semigroup ring} of $H$ over $k$. The ring  $A=k[[H]]$ is a Noetherian  integral local domain and  $V$ is a birational module-finite extension of $A$, so that  $\overline{A} = V$ (here $\overline{A}$ denotes the integral closure of $A$ in its quotient field), $\dim A = 1$, and the maximal ideal of $A$ is given by $\m = (t^{a_1},t^{a_2}, \ldots, t^{a_\ell} )$. Let 
$$
\rmc(H) = \min \{n \in \Bbb Z \mid m \in H~\text{for~all}~m \in \Bbb Z~\text{such~that~}m \ge n\}
$$
and set $\rmf(H) = \rmc(H) -1$. We then have $A:V = t^{\rmc(H)}V$, and 
$$\rmf(H) = \max ~(\Bbb Z \setminus H)$$ which is called the {\it Frobenius number} of $H$. Let $$\operatorname{e}(H)=\min~[H \setminus \{0\}].$$ Notice that $\operatorname{e}(H)$ coincides with the multiplicity of $A$ with respect to $\m$. Let $\calX_A$ stand for the set of Ulrich ideals in $A$. The ring $A=k[[H]]$ contains only finitely many Ulrich ideals generated by monomials in $t$ (\cite{GOTWY}), and naturally, the present research is more interested in Ulrich ideals which are not generated by monomials in $t$.

With this notation, in Section $2$ we summarize some basic properties of Ulrich ideals in $A$. It is rather difficult to pinpoint the members of $\calX_A$, and to overcome the difficulty, we need a new method to make the list of Ulrich ideals, which we will discuss in Section 2 (Theorem \ref{2.8}). Section 3 is devoted to make a complete list of $\calX_A$ in the case where $\operatorname{e}(H)=3$ (Theorem \ref{e=3}). Our proof is elementary, but rather long, so that it  will be divided into several steps. In Section $4$, we explore Ulrich ideals in the numerical semigroup ring $A =k[[t^4,t^{13}]]$ (Theorem \ref{e=4}). As is well-known, three-generated non-symmetric numerical semigroups are of a special kind (\cite{Herzog}). We will show in Section 5 that for every three-generated non-symmetric numerical semigroup $H$, the Betti numbers of the residue class field of the ring $A=k[[H]]$ form an arithmetic progression, whence $A$ is a Golod ring (Corollary \ref{4.2}),  so that $\calX_A= \emptyset$ (\cite{greg}). On the other hand, every three-generated symmetric numerical semigroup is obtained by {\it gluing}.  Thanks to this fact, we shall partially answer in Section 5, for three-generated symmetric numerical semigroups $H$, the question of whether $\calX_{k[[H]]}$ is empty or not (Proposition \ref{5.6}).

\section{Two-generated Ulrich ideals in core subrings of $V=k[[t]]$}

Let $k$ be a field and let $V = k[[t]]$ denote the formal power series ring over $k$. Let $A$ be a $k$-subalgebra of $V$. Then, following \cite{EGMY}, we say that $A$ is a {\it core}  of $V$, if $t^c V \subseteq A$ for some $c \gg 0$. The semigroup rings $k[[H]]$ of numerical semigroups $H$ are typical examples of cores of $V$. Nevertheless, cores of $V$ do not necessarily arise as semigroup rings. Let us note one of the simplest examples.

\begin{ex}[cf. {\cite[Example 2.1]{EGMY}}]\label{Example 2.1} 
Let $A = k[t^2 + t^3] + t^4V$. Then $A \ne k[[H]]$ for any numerical semigroup $H$.
\end{ex}
\noindent
If $I$ is an Ulrich ideal in the semigroup ring $A$ of a numerical semigroup, the blowing-up ring $A^I = \bigcup_{n \ge 0}[I^n:I^n]$ of $A$ with respect to $I$ is again a core of $V$, which is, however, not necessarily a semigroup ring. In despite of the disadvantage, inside the core $A^I$  there is contained ample information about the characteristic of $I$, which might enable us, for example, to describe a precise  system of generators of $I$. Keeping this anticipation, we shall summarize below some preliminary results about cores and their Ulrich ideals.

Let $A$ be a core of $V$ and suppose that $t^{c_0} V \subseteq A$ with an integer $c_0 \gg 0$. We then have  
$$
k[[t^{c_0}, t^{c_0+1}, \ldots, t^{2c_0-1}]] \subseteq A \subseteq V, 
$$
so that $V$ is a birational module-finite extension of $A$. Hence, $V = \overline{A}$, and $A$ is a one-dimensional Cohen-Macaulay integral complete local domain. We have $V/\n \cong A/\m$, where $\m$ (resp. $\n=tV$) stands for the maximal  ideal of $A$ (resp. $V$). Let $\rmo(*)$ denote the $\n$-adic valuation (or the order function) of $V$ and set
$$
v(A)= \{\rmo(f) \mid 0 \ne f \in A\}.
$$
Then, $H=v(A)$ is called the {\it value semigroup} of $A$, which is indeed a numerical semigroup, because $c_0, c_0+1 \in H$. Let $\fkc = A:V$ denote the conductor of $A$. Then, $\fkc = t^{\rmc(H)}V$, since $t^{c_0}V \subseteq \fkc$.  We have $\rme(H) = \rme(A)$, where $\rme(A)$ denotes the multiplicity of $A$ with respect to $\m$.


\begin{setting}
Let $I$ be a fixed two-generated  Ulrich ideal of $A$. Let $f, g \in I$ such that $I = (f, g)$ and $I^2=fI$. We consider the $A$-subalgebra $$A^I=\bigcup_{n \ge 0}[I^n : I^n]$$
of $V$, where the colon $$I^n:I^n =\{x \in \operatorname{Q}(A) \mid xI^n \subseteq I^n \}$$ is considered inside the quotient field of $A$. We then have $A^I = I:I$ since $I^{n+1}=f^{n}I$ for all $n \ge 0$, so that $$A^I = f^{-1}I =A + A{\cdot}\frac{g}{f}.$$  We set $a = \rmo(f)$, $b = \rmo(g)$, and $c = \rmc(H)$. Notice that $a$ is an invariant of $I$, since $IV=fV=t^aV$ (see Lemma \ref{2.4} also).
\end{setting}

\begin{lem}\label{2.3}
$t^{c - (b-a)} V \cap A \subseteq I$ and $\fkc \subseteq I$. 
\end{lem}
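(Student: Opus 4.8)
The plan is to reduce both containments to the single identity $A : A^I = I$ of fractional ideals, after which the two statements follow from elementary estimates on the valuation $\rmo$. Throughout I use that $b = \rmo(g) \ge a = \rmo(f)$, which is immediate from $g \in I$ and $IV = fV = t^a V$; hence $b - a \ge 0$ and the integer $c - (b-a)$ is at most $c$.

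First I would compute the colon module $A : A^I$ inside $\operatorname{Q}(A)$. Since the Setting records $A^I = A + A\cdot\frac{g}{f}$ as an $A$-module with generators $1$ and $\frac{g}{f}$, an element $x \in \operatorname{Q}(A)$ satisfies $x A^I \subseteq A$ if and only if both $x \in A$ and $x\cdot\frac{g}{f} \in A$; the latter condition says exactly $xg \in (f)$. Hence
$$
A : A^I = \{x \in A \mid xg \in (f)\} = (f) :_A g.
$$
Now I invoke the Ulrich property of $I$: because $I = (f,g)$ is a two-generated Ulrich ideal of the one-dimensional ring $A$, we have $(f) :_A g = I$, which is the identity $Q :_A I = I$ recorded in Section~\ref{intro}. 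Therefore $A : A^I = I$, and the whole lemma becomes a matter of exhibiting the relevant elements inside $A : A^I$.

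For the first containment, take $x \in t^{c-(b-a)}V \cap A$. Then $\rmo\!\left(x\cdot\tfrac{g}{f}\right) = \rmo(x) + (b-a) \ge (c-(b-a)) + (b-a) = c$, so $x\cdot\frac{g}{f} \in t^c V = \fkc \subseteq A$. Since also $x \in A$, we conclude $x A^I \subseteq A$, i.e. $x \in A : A^I = I$; this proves $t^{c-(b-a)}V \cap A \subseteq I$. The second containment is then formal: because $b \ge a$ we have $\fkc = t^c V \subseteq t^{c-(b-a)}V$, and $\fkc \subseteq A$, so $\fkc \subseteq t^{c-(b-a)}V \cap A \subseteq I$.

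The only genuinely delicate point is the computation of $A : A^I$: one must form the colon inside $\operatorname{Q}(A)$, note that $A : A^I \subseteq A$ (this is what forces the condition $x \in A$, using $1 \in A^I$), and correctly unwind membership $x A^I \subseteq A$ into the two scalar conditions via the $A$-module generators $1$ and $\frac{g}{f}$. Once $A : A^I = I$ is in hand, the valuation estimates are routine, and the two displayed containments drop out immediately.
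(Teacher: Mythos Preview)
Your proof is correct and follows essentially the same route as the paper's: both arguments show that for $x \in t^{c-(b-a)}V \cap A$ one has $x\cdot\frac{g}{f} \in t^cV = \fkc \subseteq A$, and then invoke the identity $(f):_A g = I$ coming from the Ulrich condition $I/(f)\cong A/I$. Your only difference is packaging this as $A:A^I = I$ before applying the valuation estimate, whereas the paper applies the estimate first and then quotes $A:_A\frac{g}{f} = (f):_A g = I$; the content is the same.
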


\begin{proof}
We have 
$
A^I = A + A\frac{g}{f}
$ in $V$. Therefore, because $\frac{g}{f}V =t^{b-a}V$ and $\fkc = t^cV$, we get
$$
\left(t^{c-(b-a)} V \cap A\right)\frac{g}{f} \subseteq \left(t^{c-(b-a)} V\right)\frac{g}{f} = t^cV =\fkc \subseteq A.
$$
Hence,
$$t^{c-(b-a)} V \cap A \subseteq A:_A\frac{g}{f} = (f):_Ag = I,$$ where the last equality follows from the fact that $I/(f) \cong A/I$ as an $A/I$-module. Since $\fkc = t^cV \subseteq t^{c - (b-a)} V \cap A$, the second assertion follows.
\end{proof}

\begin{lem}\label{2.4}
$
a = 2{\cdot}\ell_A(A/I).
$
\end{lem}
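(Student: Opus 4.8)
The plan is to reduce the whole statement to the single colength computation $\ell_A(A/fA) = a$ and then split that length along the filtration $fA \subseteq I \subseteq A$ (here $fA = (f)$). Since $I$ is a two-generated Ulrich ideal we have $I \ne (f)$, and, as already recorded in the proof of Lemma \ref{2.3}, the quotient $I/fA$ is isomorphic to $A/I$ as an $A/I$-module. Additivity of length would then give
\[
\ell_A(A/fA) = \ell_A(A/I) + \ell_A(I/fA) = 2\,\ell_A(A/I),
\]
so that the lemma follows at once once I know $\ell_A(A/fA) = a = \rmo(f)$.

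To establish $\ell_A(A/fA) = a$ I would pass to the normalization $V = \overline{A}$, which is a finite birational extension with $\ell_A(V/A) < \infty$ precisely because $A$ is a core of $V$ (equivalently $\fkc = A:V \ne (0)$). First I would record that $\ell_A(V/fV) = a$: indeed $fV = t^{a}V$ because $\rmo(f) = a$, and the filtration $V \supsetneq tV \supsetneq \cdots \supsetneq t^{a}V$ has successive quotients $t^{i}V/t^{i+1}V \cong V/\n \cong A/\m$, each of length one over $A$, whence $\ell_A(V/t^{a}V) = a$. Next, since $f$ is a nonzerodivisor, multiplication by $f$ gives an $A$-isomorphism $V/A \cong fV/fA$, so $\ell_A(fV/fA) = \ell_A(V/A)$. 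Computing $\ell_A(V/fA)$ in two ways, along the chains $fA \subseteq A \subseteq V$ and $fA \subseteq fV \subseteq V$, I would obtain
\[
\ell_A(A/fA) = \ell_A(V/fA) - \ell_A(V/A) = \bigl(\ell_A(V/fV) + \ell_A(fV/fA)\bigr) - \ell_A(V/A) = a,
\]
where every term is finite thanks to $\ell_A(V/A) < \infty$, and the finite summand $\ell_A(V/A)$ cancels.

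The one genuinely delicate point is this identity $\ell_A(A/fA) = \rmo(f)$; the rest is bookkeeping with the isomorphism $I/fA \cong A/I$. The main obstacle will be that $\ell_A(A/fA)$ is \emph{not} literally the valuation of $f$ for an arbitrary one-dimensional domain, so one must honestly exploit that $\overline{A} = V$ is a discrete valuation ring of finite colength over $A$ — this is exactly where the core hypothesis enters, through the finiteness of $\ell_A(V/A)$. Once that cancellation is justified, the desired equality $a = 2\,\ell_A(A/I)$ is immediate.
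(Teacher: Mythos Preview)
Your argument is correct. The two proofs differ mainly in packaging: the paper computes $\ell_A(I/fI)$ and uses the isomorphism $I/fI \cong (A/I)^{\oplus 2}$, reaching $\ell_A(I/fI)=a$ via the chain of multiplicities $\rme^0_{(f)}(I)=\rme^0_{(f)}(A)=\rme^0_{(f)}(V)=\rmo(f)$, whereas you compute $\ell_A(A/fA)$ and use the filtration $fA\subseteq I\subseteq A$ together with $I/(f)\cong A/I$. Your normalization argument for $\ell_A(A/fA)=a$ is exactly the elementary content of the paper's equality $\rme^0_{(f)}(A)=\rme^0_{(f)}(V)$, just unwound by hand rather than invoked as a fact about additivity of multiplicities on modules of finite length. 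What you gain is self-containment (no appeal to multiplicity theory); what the paper gains is brevity and a formulation that immediately generalizes to higher-dimensional settings.
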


\begin{proof}
Let $\rme^0_{(f)}(M)$ denote, for each finitely generated $A$-module $M$,  the multiplicity  of $M$  with respect to the parameter ideal $(f)$ of $A$. Then, $\rme^0_{(f)}(I)=\ell_A(I/fI)$, since $I$ is a maximal Cohen-Macaulay $A$-module (here $\ell_A(*)$ denotes the length), while $$\rme^0_{(f)}(I)=\rme^0_{(f)}(A)=\rme^0_{(f)}(V) = \rmo(f).$$ Therefore, $a =2{\cdot}\ell_A(A/I)$, because $I/fI \cong (A/I)^{\oplus 2}$ as an $A$-module.
\end{proof}

We furthermore have the following.

\begin{lem}\label{lemma0}
The following assertions hold true.
\begin{enumerate}[$(1)$]
\item $0 < a \le b < a + c$.
\item $2b-a \in H$.
\item If $a \ge c$, then $\rme(A) = 2$ and $I=\fkc$.
\end{enumerate}
\end{lem}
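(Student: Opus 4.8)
The plan is to establish the three assertions in order, exploiting the structural identities recorded in the Setting together with Lemmas \ref{2.3} and \ref{2.4}.

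For assertion $(1)$, I would argue as follows. Since $f, g \in I$ and $I$ is $\m$-primary with $f$ a parameter, we have $a = \rmo(f) > 0$ immediately, and $a \le b$ follows after normalizing the choice of generators: because $I = (f,g)$ and $IV = t^aV$, every element of $I$ has order $\ge a$, so in particular $\rmo(g) = b \ge a$. The substantive inequality is the upper bound $b < a + c$. Here I would use Lemma \ref{2.3}, which gives $\fkc = t^cV \subseteq I$. Suppose for contradiction that $b \ge a + c$, equivalently $b - a \ge c$. Then $\frac{g}{f} \in t^{b-a}V \subseteq t^cV = \fkc \subseteq A$, so $g \in fA \subseteq (f)$, contradicting the fact that $I = (f,g)$ is minimally generated by two elements (if $g \in (f)$ then $I = (f)$ would be principal). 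Thus $b < a + c$ as claimed.

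For assertion $(2)$, the key relation is $I^2 = fI$, which we exploit by examining $g^2$. Since $g \in I$, we have $g^2 \in I^2 = fI \subseteq fV = t^aV$, and more precisely $g^2 = f \cdot h$ for some $h \in I \subseteq A$. Taking orders: $\rmo(g^2) = 2b$ and $\rmo(fh) = a + \rmo(h)$, so $\rmo(h) = 2b - a$. Since $h \in A$ is nonzero (as $g \ne 0$ in a domain), its order $2b - a$ lies in the value semigroup $H = v(A)$, giving $2b - a \in H$.

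For assertion $(3)$, assume $a \ge c$. I would first show $\fkc \subseteq I \subseteq \fkc$ forces $I = \fkc$ by squeezing. Note $t^{c-(b-a)}V \cap A \subseteq I$ from Lemma \ref{2.3}; since $a \ge c$ and $b \ge a$, we have $b - a \ge 0$ and want to relate $c - (b-a)$ to the relevant orders. The cleaner route uses Lemma \ref{2.4}: $a = 2\ell_A(A/I)$, combined with $\ell_A(A/\fkc) = \ell_A(V/\fkc) - \ell_A(V/A) = c - \ell_A(V/A)$ and $\ell_A(A/\fkc) \ge \ell_A(A/I)$ since $\fkc \subseteq I$. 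The condition $a \ge c$ then pins down $\ell_A(A/I)$ tightly enough to force $\rme(A) = \rme(H) = 2$; indeed minimal multiplicity considerations show $H = \langle 2, c \rangle$-type, and the containment $\fkc \subseteq I$ together with the length count $\ell_A(A/I) = a/2 \ge c/2$ forces equality $I = \fkc$. I expect this third part to be the main obstacle, since it requires converting the order inequality $a \ge c$ into a precise statement about the multiplicity; the careful bookkeeping of lengths via $\ell_A(A/I) = a/2$ and the structure of $H$ forced by $\rme(A) = 2$ is where the argument must be handled with the most care.
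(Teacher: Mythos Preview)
Your proofs of $(1)$ and $(2)$ are correct and essentially the same as the paper's: for $(1)$ the paper also argues that $g/f\in V$ gives $b\ge a$, and that $g\notin fA$ forces $g/f\notin\fkc$, hence $b-a<c$; for $(2)$ the paper simply says $g^2\in fI$ implies $2b-a\in H$.

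For $(3)$, however, you are overcomplicating what is in fact the easiest part, and your sketch does not actually close. The direct route (which is what the paper does) is to observe that $a\ge c$ and $b\ge a\ge c$ force $f,g\in t^cV=\fkc$, so $I=(f,g)\subseteq\fkc$; combined with $\fkc\subseteq I$ from Lemma~\ref{2.3} this gives $I=\fkc$ immediately, with no length bookkeeping required. Once $I=\fkc=t^cV$, the ideal $I$ is a principal $V$-ideal, hence $I\cong V$ as $A$-modules, and therefore $2=\mu_A(I)=\mu_A(V)=\rme(A)$. Your proposed route through $\ell_A(A/\fkc)=c-\ell_A(V/A)$ and the inequality $a/2\ge c/2$ only yields $\ell_A(V/A)\le c/2$, which is not strong enough by itself to pin down $\rme(A)=2$ or to force $I=\fkc$; the ``tight enough to force'' step is where your argument is a genuine gap rather than just informal.
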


\begin{proof}
Because $fV = IV$, we have $\frac{g}{f} \in V$, whence $b-a \ge 0$. Since $I$ is minimally generated by $f$ and $g$, we get $g \not\in fA$, whence $\frac{g}{f} \not\in \fkc$. Thus $b-a = \rmo(\frac{g}{f}) < c$. Therefore
$$
0 < a \le b < a + c. 
$$
Because $g^2 \in I^2 = fI$, we get $2b-a \in H$. Assume that $a \ge c$. Then $f, g \in \fkc=t^cV$, whence $I \subseteq \fkc$. Consequently, Lemma \ref{2.3} forces $I = \fkc$, so that $I = fV$, because $I$ is an ideal of $V$ and $IV=fV$. Consequently, since $I \cong V$ as an $A$-module, we have $$2=\mu_A(I) = \mu_A(V) = \rme(A)$$
as claimed \end{proof}

Notice that $b-a$ may belong to $H$, since $I = (f, f+g)$. We however have the following.

\begin{prop}\label{2.5}
One can choose the elements $f, g \in I$ so that $b-a \not\in H$.
\end{prop}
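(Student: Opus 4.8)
The plan is to keep the reduction generator $f$ fixed, so that the invariant $a = \rmo(f)$ is unchanged, and to adjust only the second generator $g$, replacing it by $g' = g - hf$ for a suitable $h \in A$. Since $g = g' + hf$, such a replacement preserves $I = (f, g')$, and because $I$ is minimally generated by two elements it follows automatically that $g' \notin (f)$, so $\{f, g'\}$ is again a minimal system of generators. Writing $\xi = \frac{g}{f} \in V$ (note $\frac{g}{f} \in V$ since $IV = fV$), the replacement $g \mapsto g - hf$ changes $\xi$ into $\frac{g'}{f} = \xi - h$, and correspondingly $b - a = \rmo(\xi)$ becomes $\rmo(\xi - h)$. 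Thus the whole statement reduces to a claim about the single element $\xi$: there exists $h \in A$ with $\rmo(\xi - h) \notin H$.

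First I would record that $\xi \notin A$: otherwise $g = \xi f \in (f)$, contradicting the minimality of the generating set $\{f, g\}$. The main step is then an approximation argument using that $A$, being a complete local ring and a module-finite subring of $V$, is closed in $V$ for the $t$-adic topology. Consider the set of orders $\{\rmo(\xi - h) \mid h \in A\}$; all of these are finite, since $\xi - h = 0$ would force $\xi = h \in A$. I claim this set is bounded above. If not, one could pick $h_n \in A$ with $\rmo(\xi - h_n) \to \infty$, whence $h_n \to \xi$ $t$-adically, and closedness of $A$ would give $\xi \in A$, a contradiction. Hence there is $h_0 \in A$ for which $n := \rmo(\xi - h_0)$ is maximal. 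It remains to check $n \notin H$: if instead $n \in H = v(A)$, some element of $A$ would have order $n$, and subtracting from $\xi - h_0$ a suitable $k$-multiple of it (matching the leading coefficient) would strictly increase the order, contradicting the maximality of $n$. Setting $g' = g - h_0 f$ then gives $I = (f, g')$ with $\rmo(g'/f) = n \notin H$, as required.

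The verifications that $\{f, g'\}$ remains a minimal generating set and that cancelling a matched leading term raises the order are routine; the one genuine point is the boundedness of $\{\rmo(\xi - h) \mid h \in A\}$, which is exactly where the completeness (closedness) of $A$ in $V$ is essential. As a consistency check one may observe that, by Lemma \ref{lemma0}(1) applied to the new generators, the resulting gap $n = b - a$ satisfies $n < c$, in agreement with $n$ being a gap of $H$.
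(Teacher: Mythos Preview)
Your proof is correct and follows essentially the same approach as the paper: fix $f$ and replace $g$ by $g - hf$ for suitable $h \in A$, thereby subtracting elements of $A$ from $\xi = g/f$ to push up its order until the order lies outside $H$. The only difference is in how termination is justified: the paper bounds the procedure explicitly by observing that $b_i - a < c$ at every step (since $g_i/f \notin A \supseteq t^cV$), whereas you argue boundedness of $\{\rmo(\xi - h) \mid h \in A\}$ via closedness of $A$ in $V$---but that closedness is precisely the statement $t^cV \subseteq A$, so the two arguments are the same bound seen from slightly different angles.
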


\begin{proof}
Suppose that $b-a \in H$ and choose $\xi \in A$ so that $$\rmo(\xi) = b-a, \ \ \rmo(g-f\xi) > b.$$ Set $g_1 = g - f\xi$ and $b_1 = \rmo(g_1)$. We then have $I = (f, g_1)$ and 
\begin{center}
$0 < a \le b < b_1 < a + c$
\end{center}
where the last inequality comes from the facts that $\frac{g_1}{f} \not\in A$ and that $\fkc =t^cV \subseteq A$. If $b_1-a \in H$, let us choose $g_2 \in I$ so that $I = (f, g_2)$ and $b_1 < b_2=\rmo(g_2)$. Since still $b_2 < a + c$, this procedure will terminate after finitely many steps, which shows that we can eventually choose the elements $f,g \in I$ so that $I = (f,g)$, $I^2=fI$, and $b-a \not\in H$.
\end{proof}

We summarize the above arguments in the following.

\begin{thm}\label{2.8}
Let $A$ be a core of $V$ and let $H=v(A)$. Let $I$ be an Ulrich ideal in $A$ with $\mu_A(I) =2$. Then one can choose elements $f, g \in I$ so that the following conditions are satisfied, where $a = \rmo(f)$, $b=\rmo(g)$, and $c = \rmc(H)$.
\begin{enumerate}[$(1)$]
\item $I=(f, g)$ and $I^2= fI$. 
\item $a, b \in H$ and $0 < a < b < a + c$.
\item $b-a \not\in H$, $2b-a \in H$, and $a = 2\cdot \ell_A(A/I)$.
\item If $a \ge c$, then $\rme(A) = 2$ and $I=\fkc$.
\end{enumerate}
\end{thm}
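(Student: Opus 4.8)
The plan is to assemble Theorem~\ref{2.8} directly from the lemmas already proved, since each clause is essentially a restatement of a result established earlier in this section. The only genuine work is to verify that the various choices of $f,g$ can be made \emph{simultaneously}, and that the hypotheses of each lemma remain valid after these choices.

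First I would invoke Proposition~\ref{2.5} to fix generators $f,g$ of $I$ with $b-a\not\in H$, where $a=\rmo(f)$, $b=\rmo(g)$. This immediately secures the first half of clause~(3), and it also gives clause~(1): that $I=(f,g)$ and $I^2=fI$ are part of the data in the Setting and are preserved by the replacement carried out in the proof of Proposition~\ref{2.5}. For clause~(2), I would note that $a,b\in H$ by definition of the value semigroup $H=v(A)$ (as $\rmo$-values of nonzero elements $f,g\in A$), and that the inequalities $0<a\le b<a+c$ come from Lemma~\ref{lemma0}(1). The sharpening from $a\le b$ to the strict $a<b$ follows because $b-a\not\in H$ rules out $b-a=0$ (since $0\in H$); this is the one place where the choice from Proposition~\ref{2.5} feeds back into clause~(2).

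Next I would complete clause~(3). The relation $2b-a\in H$ is exactly Lemma~\ref{lemma0}(2), which is stated for arbitrary admissible $f,g$ and so survives our particular choice; I should just confirm that the proof of Proposition~\ref{2.5} leaves the condition $g^2\in I^2=fI$ intact, which it does, as the new generators still satisfy $I^2=fI$. The equality $a=2\cdot\ell_A(A/I)$ is precisely Lemma~\ref{2.4}, and since $a=\rmo(f)=\rme^0_{(f)}(A)$ is an invariant of $I$ (independent of the choice of $f$, as remarked in the Setting via $IV=fV=t^aV$), this holds for our chosen $f$ as well. Finally, clause~(4) is a verbatim copy of Lemma~\ref{lemma0}(3).

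The main obstacle—really the only point requiring care—is the compatibility of the choices: one must check that after replacing $g$ by $g_1=g-f\xi$ in Proposition~\ref{2.5} to arrange $b-a\not\in H$, the conclusions of Lemma~\ref{lemma0} (proved for a generic admissible pair) and the invariance statement of Lemma~\ref{2.4} are not disturbed. This is straightforward because $a=\rmo(f)$ is fixed throughout, $I$ and the products $I^2=fI$ are unchanged, and each of Lemmas~\ref{2.3}, \ref{2.4}, and~\ref{lemma0} is formulated for \emph{any} generating pair $(f,g)$ with $I=(f,g)$ and $I^2=fI$. Hence no lemma needs to be reproved; the theorem is simply the conjunction of Lemmas~\ref{2.4}, \ref{lemma0}, and Proposition~\ref{2.5}, and I would present the proof as a short citation-style synthesis of these facts.
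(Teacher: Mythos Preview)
Your proposal is correct and matches the paper's approach exactly: the paper presents Theorem~\ref{2.8} as a summary of the preceding results (introduced with ``We summarize the above arguments in the following'') without a separate proof, so your citation-style synthesis of Lemmas~\ref{2.4} and~\ref{lemma0} together with Proposition~\ref{2.5} is precisely what is intended. Your additional care in checking that the choice made in Proposition~\ref{2.5} is compatible with the other lemmas, and that $b-a\notin H$ upgrades $a\le b$ to $a<b$, is appropriate and correct.
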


Let us give the following, showing how Theorem \ref{2.8} works to pinpoint the elements of the set $\calX_A$ of Ulrich ideals in $A$.

\begin{ex}[{cf. \cite[Theorem 1.7]{GOTWY}, \cite[Theorem 6.1]{GIT}}]\label{ex0}
$\calX_{k[[t^3,t^4]]}= \{(t^4,t^6)\}$ and $\calX_{k[[t^3,t^5]]}= \emptyset$.
\end{ex}

\begin{proof}
Let $A =k[[t^3, t^4]]$ and $I= (t^4,t^6)$. Then, $I^2=t^4I$ and $\fkc = (t^6,t^7, t^8) \subseteq I$. We have $\ell_A(A/I)\le 2$, whence $\ell_A(I/(t^4))\ge 2$ because $\ell_A(A/(t^4))=4$, so that the epimorphism $$\varphi : A/I \to I/(t^4), \ \ \varphi(1~\mod~I)=t^6~\mod~(t^4)$$ of $A$-modules must be an isomorphism. Hence, $I=(t^4,t^6)$ is an Ulrich ideal in $A$. Conversely, let $I \in \calX_A$ and choose $f,g \in I$ so that all the conditions stated in Theorem \ref{2.8} are satisfied. Then, $b-a = 1,2$, or $5$, while $a$ is even. If $a \ge c=6$, then $I=\fkc =(t^6, t^7, t^8)$ by Lemma \ref{lemma0}, so that $\mu_A(I) = 3$, since $I \cong V$ as an $A$-module. This is impossible, because $I$ is two-generated. Therefore, $a= 4$, and $b-a= 1,2$, or $5$. We consider the following table.

\vspace{-0.3em}
$$
\begin{tabular}{|c|c|c|c|}
\hline
$a$ & $4$  & $4$ & $4$ \\ \hline 
$b-a$ & $1$  & $2$ & $5$  \\ \hline 
$b$ & $5$ &  $6$ & $9$ \\ \hline
$2b-a$ & $6$ &  $8$ & $14$ \\ \hline
$6-(b-a)$ & $5$ &  $4$ & $1$ \\ \hline
\end{tabular} \vspace{1em}
$$
Here, the values of the second (resp. the third and the fourth) column indicate the possible values of $b, 2b-a$, and $6 -(b-a)$, when $a =4$ and $b-a = 1, 2, 5$, respectively.  We then have $b-a \ne 1$, since $5 \not\in H$. Suppose that $b-a= 5$. Since $6 -(b-a) =1$, we get $t^3 \in I=(f,g)$ by Lemma \ref{lemma0}, which is impossible, because $\rmo(f)=4$ and $\rmo(g)=9$. Therefore, $a=4$ and $b=6$. Consequently, 
$f= t^4 + \rho$ with $\rho \in \fkc$ and $g \in \fkc$, so that $$I=(f,g)+(t^6, t^7, t^8)=(t^4, t^6,t^7,t^8),$$ because $I \supseteq \fkc$ by Lemma \ref{2.3}. Thus, $I = (t^4, t^6)$, whence $\calX_A=\{(t^4,t^6)\}$.

We similarly conclude that $\calX_{k[[t^3,t^5]]}= \emptyset$, whose proof we would like to leave to the reader.
\end{proof}

\begin{ex}\label{2.9}
Let $A = k[[t^2, t^{2\ell + 1}]]~(\ell \ge 1)$ . Then 
$$
\calX_A = \{(t^{2q}, t^{2\ell + 1}) \mid 1 \le q \le \ell\}. 
$$ 
\end{ex}

\begin{proof}
We have $A = k[[H]]$ for the semigroup $H = \left<2, 2\ell + 1\right>$. Hence, $c = 2\ell$ and  $\fkc = t^{2 \ell}V=(t^{2\ell}, t^{2\ell + 1})$. Let  $I = (t^{2q}, t^{2\ell + 1})$ ($1 \le q  \le \ell$). We set $x=t^2$ and $y = t^{2\ell + 1}$. Then, since $I=(x^q,y)$ and since $$y^2 = x^{2 \ell + 1} = x^q \cdot x^{2 \ell + 1 -q} \in x^q I,$$ we get $I^2 = x^q I$, while $A/I \cong I/(x^q)$ as an $A$-module, because $I/(x^q)$ is a homomorphic image of $A/I$ and 
$$\ell_A(A/I)= \ell_A(I/(x^q))= q. 
$$  Therefore, $I$ is an Ulrich ideal of $A$.

Conversely, let $I$ be an Ulrich ideal of $A$, and choose $f, g \in I$ so that the conditions stated in Theorem \ref{2.8} are satisfied. Then, $0 <b-a \notin H$ and $a \ge 2$ is even. Hence, $b-a  \in \{1,3,5, \cdots, 2\ell-1\}$, so that $b \in H$ and $b$ is odd. Therefore, $b \ge 2\ell + 1$ whence $g \in t^{2\ell} V = \fkc$, so that $$I= (f,g)=(f)+\fkc=(f, t^{2 \ell}, t^{2\ell + 1})$$ where the second equality follows from Lemma \ref{2.3}. Let us write $a = 2q$ ($q \ge 1$). If $q \ge \ell$, then  $f \in \fkc$ also, so that $I = \fkc = (t^{2\ell}, t^{2\ell + 1})$. Assume $q < \ell$, and write $$f = t^{2q} + c_{q+1}t^{2(q + 1)} + c_{q+2}t^{2(q + 2)} + \cdots + c_{\ell-1}t^{2(\ell-1)} + \rho$$ where $c_i \in k$ ($q+1 \le i \le \ell -1$) and $\rho \in \fkc = t^{2\ell}V$. Then, since $1-c_{q+1}t^2$ is invertible in $A$, by replacing $f$ with $f-c_{q+1}t^2f=(1-c_{q+1}t^2)f$ if necessary, we may assume that $c_{q+1} = 0$. By repeating the same procedure for the remaining coefficients $c_i$'s in $f$, we finally obtain
$$
I = (t^{2q} + \rho, t^{2\ell}, t^{2\ell + 1}) =  (t^{2q}, t^{2 \ell}, t^{2\ell + 1}) =  (t^{2q}, t^{2\ell + 1})
$$
as claimed. 
\end{proof}

\begin{remark}
In Example \ref{2.9}, let $1 \le q_1, q_2 \le \ell$. Then $(t^{2q_1}, t^{2\ell + 1})=(t^{2q_2}, t^{2\ell + 1})$ if and only if $q_1=q_2$. 
\end{remark}


\section{Numerical semigroup rings of multiplicity $3$}
Let $H$ be a numerical semigroup with $\rme(H)=3$ and let $A=k[[H]]$ be the semigroup ring of $H$ over a field $k$.

The purpose of this section is to determine all the {\it two-generated} Ulrich ideals in $A$. Since $\rme(H)=3$, $H$ is at most three-generated, and if $H$ is minimally three-generated, the Cohen-Macaulay local ring $A$ has minimal multiplicity, so that $A$ is G-regular (\cite{greg}), containing no two-generated Ulrich ideals. This observation allows us to assume that $H= \left<3, \ell \right>$, where $\ell \ge 4$ is an integer such that $\operatorname{GCD}(3,\ell)=1$. Thanks to Example \ref{ex0}, we may assume that $\ell \ge 7$.

The goal of this section is Theorem \ref{e=3} below. With the following setting we divide the proof into several steps.

\begin{setting}
Let $I$ be an Ulrich ideal of $A=k[[t^3, t^{\ell}]]$ with $\ell \ge 7$ and $\operatorname{GCD}(3,\ell)=1$. Hence, $\mu_A(I) = 2$, because $A$ is a Gorenstein ring. We choose elements $f, g \in I$, so that all the conditions stated in Theorem \ref{2.8} are satisfied. We set $a = \rmo(f)$, $b = \rmo(g)$, and $c =\rmc(H)$. Let $$B= \bigcup_{n\geq 0}\left[I^n: I^n\right], \ \ \xi = \frac{g}{f},\ \ \text{and} \ \ H_1=v(B).$$ Then $B = f^{-1}I=A + A\xi$ with $\mu_A(B)=2$ and $B=k[[t^3,t^{\ell}, \xi]]$ is a Gorenstein local ring (\cite[Corollary 2.6 (b)]{GOTWY}). We have $\rmo(\xi)=b-a$, whence $b-a \in H_1 \setminus H$.  
\end{setting}

Let us note the following.

\begin{lem}\label{2.1}
$\xi \not\in \m B$ but $\xi^2 \in \m B$.
\end{lem}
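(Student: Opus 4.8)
The plan is to pass to the Artinian local ring $B/\m B$ and settle both assertions purely by counting $k$-dimensions, the only inputs being $\mu_A(B)=2$ together with $0<a<b$ (so that $\rmo(\xi)=b-a>0$). Write $\m_B$ for the maximal ideal of $B$. Since $B$ is a core of $V$, its residue field is $B/\m_B=k$, and $\m B\subseteq\m_B$, so $B/\m B$ is a local Artinian $k$-algebra with residue field $k$ and $\dim_k(B/\m B)=\mu_A(B)=2$. Throughout, $\bar\xi$ denotes the image of $\xi$ in $B/\m B$.

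For the first assertion, suppose to the contrary that $\xi\in\m B$. Then $B=A+A\xi\subseteq A+\m B$, hence $B=A+\m B$, and Nakayama's lemma forces $B=A$, contradicting $\mu_A(B)=2$. Equivalently, the images of $1$ and $\xi$ form a $k$-basis of $B/\m B$, so $\bar\xi\neq 0$.

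For the second assertion, observe that the maximal ideal of $B/\m B$ is $\m_B/\m B$, whose $k$-dimension is $\dim_k(B/\m B)-\dim_k(B/\m_B)=2-1=1$. Since $\rmo(\xi)=b-a>0$, the element $\xi$ is a non-unit, i.e.\ $\xi\in\m_B$; hence $\bar\xi$ is a nonzero element of the one-dimensional space $\m_B/\m B$ and therefore spans it. A one-dimensional maximal ideal of an Artinian local ring has vanishing square (if $(\m_B/\m B)^2=\m_B/\m B$, Nakayama would give $\m_B/\m B=0$), so $\bar\xi^{\,2}=0$ in $B/\m B$; that is, $\xi^2\in\m B$.

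I expect no serious obstacle: the only points needing care are the bookkeeping that $\dim_k(\m_B/\m B)=1$ and that $\xi$ is a non-unit of $B$, both immediate from $B/\m_B=k$ and $0<a<b$. As a consistency check one can instead argue by hand: writing $g^2=fh$ with $h\in I=(f,g)$ gives $\xi^2=h/f\in B$, and expressing $h=\alpha f+\beta g$ with $\alpha,\beta\in A$ yields $\xi^2=\alpha+\beta\xi$; comparing orders (using $b-a\notin H$, so the order-$(b-a)$ term of $\beta\xi$ cannot be cancelled by $\alpha$, whose orders lie in $H$) shows $\alpha,\beta\in\m$, which again places $\xi^2$ in $\m B=\m+\m\xi$.
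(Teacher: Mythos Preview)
Your proof is correct and follows essentially the same route as the paper: both pass to the two-dimensional local $k$-algebra $B/\m B$, use $\mu_A(B)=2$ to see that $\bar\xi\neq 0$ and that the one-dimensional maximal ideal $\m_B/\m B$ squares to zero, and conclude $\bar\xi^{\,2}=0$. Your write-up supplies more detail (the Nakayama step for $\xi\notin\m B$, the explicit dimension count, and the alternative order-comparison argument at the end), but the underlying idea is identical.
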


\begin{proof}
Let $\m_B$ be the maximal ideal of $B$. Then, since $\ell_A(B/\m B)=2$ and $B = A + A\xi$, we have $\xi \not\in \m B$ and $(\m_B/\m B)^2=(0)$. Since the maximal ideal $\m_B/\m B$ of $B/\m B$ is principal and generated by the image $\overline{\xi}$ of $\xi$,  we get  ${\overline{\xi}}^2 = 0$ in $B/\m B$.
\end{proof}

\begin{lem}\label{2.2}
Suppose that $H_1 = \left<3, \alpha \right>$ for some $\alpha>0$. Then, $\frac{\ell}{2} \le \alpha < \ell$. 
\end{lem}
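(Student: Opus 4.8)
The plan is to analyze the structure of the blowing-up ring $B = A + A\xi = k[[t^3, t^\ell, \xi]]$ under the hypothesis $H_1 = \langle 3, \alpha \rangle$. Since $B$ is a Gorenstein numerical semigroup ring of multiplicity $3$ (its multiplicity agrees with $\rme(A)=3$ because $A \subseteq B \subseteq V$ share the same value at $t^3$), and since $\langle 3, \alpha\rangle$ is Gorenstein precisely when it is two-generated with $\GCD(3,\alpha)=1$, the hypothesis is consistent. The value semigroup satisfies $H = \langle 3, \ell\rangle \subseteq H_1 = \langle 3, \alpha\rangle$, so in particular $\alpha \le \ell$; and since $B \supsetneq A$ (as $I$ is a genuine Ulrich ideal, $\xi \notin A$), the containment is strict, forcing $\alpha < \ell$. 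This gives the upper bound immediately. The real content is the lower bound $\frac{\ell}{2} \le \alpha$.

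For the lower bound I would exploit Lemma \ref{2.1}, which says $\xi \notin \m B$ but $\xi^2 \in \m B$, together with $\rmo(\xi) = b - a$. First I would pin down $\rmo(\xi)$ in terms of $\alpha$: since $\xi \in B$ with $B = k[[t^3,\xi]]$ of multiplicity $3$ and value semigroup $\langle 3,\alpha\rangle$, and since $\xi \notin A = k[[t^3,t^\ell]]$ while $\xi$ generates the ``new'' part of $H_1$, the order $\rmo(\xi)$ should be exactly $\alpha$ (or at least congruent to $\alpha \pmod 3$ and minimal among elements of $H_1 \setminus H$). The key relation is $2b - a \in H$ from Theorem \ref{2.8}(3), which translates to $2\,\rmo(\xi) = 2(b-a) \in H = \langle 3,\ell\rangle$. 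Writing $\rmo(\xi) = \alpha$, this says $2\alpha \in \langle 3, \ell\rangle$.

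The main step is then to extract the bound from $2\alpha \in H$ combined with $\alpha \notin H$ (which holds because $\alpha = b - a \notin H$, again by Theorem \ref{2.8}(3)). Since $\alpha \notin \langle 3,\ell\rangle$ but $\GCD(3,\alpha)=1$, we have $\alpha \not\equiv 0 \pmod 3$, and $\alpha$ cannot be written using $\ell$ alone in a way that lands in $H$; the only obstruction to membership forces $\alpha$ to be small relative to $\ell$ on the ``wrong'' residue class, yet $2\alpha$ returning to $H$ pushes $\alpha$ up. Concretely, if $2\alpha \in \langle 3,\ell\rangle$ then $2\alpha = 3s + \ell m$ for some integers $s,m \ge 0$; I would argue that $m \ge 1$ is impossible to avoid while keeping $\alpha \notin H$ and $\alpha < \ell$, unless $2\alpha \ge \ell$, which is exactly $\alpha \ge \ell/2$. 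The delicate point — and the step I expect to be the main obstacle — is justifying that $\rmo(\xi)$ equals $\alpha$ precisely (not merely $\alpha + 3\Z$), i.e. that the minimal new generator of $H_1$ is realized in the lowest degree by $\xi$ itself; this requires using that $\m_B/\m B$ is principal generated by $\overline{\xi}$ (from the proof of Lemma \ref{2.1}) to conclude $\xi$ has minimal value among $H_1 \setminus H$. Once this identification is secure, the inequality $\ell/2 \le \alpha < \ell$ follows from the arithmetic of $2\alpha \in H$ and $\alpha \notin H$ with $\alpha < \ell$.
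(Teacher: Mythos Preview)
Your argument for the upper bound $\alpha < \ell$ is fine and matches the paper. The lower bound, however, has two genuine problems.

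First, there is an arithmetic slip: Theorem \ref{2.8}(3) gives $2b - a \in H$, not $2(b-a) \in H$. These differ by $a$, and since subtracting an element of $H$ need not stay in $H$, you cannot conclude $2\,\rmo(\xi) = 2(b-a) \in H$ from $2b-a \in H$. Your subsequent arithmetic (``$2\alpha = 3s + \ell m$ \ldots'') therefore never gets off the ground.

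Second, and more structurally, your plan requires identifying $\rmo(\xi) = \alpha$. This is exactly Proposition \ref{3.3}, which the paper proves \emph{after} the present lemma and whose proof explicitly invokes the bound $\alpha < \ell$ from Lemma \ref{2.2}. So even if you could justify that identification, you would be arguing in a circle.

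The paper avoids both issues by not touching $\xi$ directly. It picks any $\eta \in B$ with $\rmo(\eta)=\alpha$, observes that $B = k[[t^3,\eta]]$ (a short value-semigroup argument), and then, by the same reasoning as in Lemma \ref{2.1}, obtains $\eta^2 \in \m B = (t^3,t^\ell)B$. Writing $\eta^2 = t^3\varphi + t^\ell\psi$ with $\varphi,\psi \in B$ and assuming $2\alpha < \ell$, one reads off $\rmo(\varphi) = 2\alpha - 3 \in H_1$; but $2\alpha - 3$ is precisely the Frobenius number of $H_1 = \langle 3,\alpha\rangle$, a contradiction. The point is that $\eta^2 \in \m B$ gives information relative to $H_1$, not $H$, and that is what makes the order comparison work.
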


\begin{proof}
As $H_1 \supsetneq H$, we have $\alpha \not\in H$. Hence, $\alpha < \ell$, because $\ell \in H_1=\left<3, \alpha \right>$ and $\ell \not\in 3\bbZ$. Choose $\eta \in B$ so that $\rmo(\eta)=\alpha$. We then have $B = k[[t^3, \eta]]$. In fact, let $C = k[[t^3, \eta]]$. Then, $t^nV \subseteq C$ for some $n \gg 0$, and therefore, because $$C \subseteq B   \ \  \text{and} \ \ v(B) = \left<3,\alpha \right> \subseteq v(C),$$ we naturally get $B \subseteq C$, whence $B = k[[t^3,\eta]]$ and $\eta^2 \in \m B=(t^3, t^{\ell})B$ (see the proof of Lemma \ref{2.1}). Consequently, if $2 \alpha < \ell$, then passing to the expression
$$ \eta^2 = t^3 \varphi + t^{\ell}\psi$$
of $\eta^2$ with $\varphi, \psi \in B$, we get $2\alpha -3 = \rmo(\varphi) \in H_1$, which is impossible, because $\rmc(H_1) 
= (3-1)(\alpha-1) = 2\alpha -2$. Thus, $2\alpha \ge \ell$.
\end{proof}

Since $\mu_A(V) = 3$, we have $B \ne V$, whence $1 \not\in H_1$. Therefore, if $2 \in H_1$, then $H_1 = \left<2, 3\right>$, so that Lemma \ref{2.2} forces $\ell \le 6$, which violates the assumption that $\ell \ge 7$. Thus, $2 \not\in H_1$. Consequently, $\rme(H_1)=\min [H_1 \setminus\{0\}] = 3$, and $H_1$ is symmetric by \cite{Kunz}, because $B$ is a Gorenstein ring. Hence $$H_1 = \left<3, \alpha\right> \  \text{for~some}~\alpha \ge 4~\text{such~that}~\alpha \not\equiv 0~\mod~3.$$ We furthermore have the following.

\begin{prop}\label{3.3}
$\alpha = b-a$. Hence $H_1=\left<3,b-a\right>$ and $B = k[[t^3, \xi]]$.
\end{prop}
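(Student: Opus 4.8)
The plan is to determine $\rmo(\xi)=b-a$ by squeezing it between two bounds and showing that both equal $\alpha$. Throughout I would use only the facts already in hand: $B=A+A\xi$, that $\rmo(\xi)=b-a\in H_1\setminus H$, and that $H_1=\langle 3,\alpha\rangle$ with $\alpha\ge 4$, $\alpha\not\equiv 0\pmod 3$, and $\alpha\notin H$ (the latter was observed in the proof of Lemma \ref{2.2}). Notably, this argument does not seem to require the Gorenstein hypothesis on $B$ nor Lemma \ref{2.1}; it is purely a valuation computation.

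First I would establish the lower bound $b-a\ge\alpha$. Since $b-a\notin H$ and every nonnegative multiple of $3$ lies in $H=\langle 3,\ell\rangle$, the integer $b-a$ cannot be divisible by $3$. On the other hand, every element of $H_1=\langle 3,\alpha\rangle$ that is smaller than $\alpha$ is a nonnegative multiple of $3$, so $\alpha$ is the least positive element of $H_1$ not divisible by $3$. As $b-a$ is such an element, we get $b-a\ge\alpha$.

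For the reverse inequality I would argue by contradiction, supposing $\rmo(\xi)=b-a>\alpha$. Choose $\eta\in B$ with $\rmo(\eta)=\alpha$ (possible since $\alpha\in H_1=v(B)$), and write $\eta=p+q\xi$ with $p,q\in A$, using $B=A+A\xi$. If $q=0$, then $\alpha=\rmo(\eta)=\rmo(p)\in v(A)=H$, contradicting $\alpha\notin H$. If $q\ne 0$, then $\rmo(q\xi)=\rmo(q)+\rmo(\xi)\ge\rmo(\xi)>\alpha=\rmo(\eta)$, so reading orders in $p=\eta-q\xi$ gives $\rmo(p)=\alpha$; again $\rmo(p)\in H$ forces the contradiction $\alpha\in H$. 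Hence $b-a\le\alpha$, and combined with the previous paragraph, $b-a=\alpha$. Once $\rmo(\xi)=\alpha$ is known, the subring $C=k[[t^3,\xi]]\subseteq B$ satisfies $v(C)\supseteq\langle 3,\alpha\rangle=H_1=v(B)$ and contains $t^nV$ for $n\gg 0$, so the comparison already used in the proof of Lemma \ref{2.2} yields $C=B$; that is, $B=k[[t^3,\xi]]$ and $H_1=\langle 3,b-a\rangle$.

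The only delicate point is the valuation bookkeeping in the middle step: one must rule out a cancellation of leading terms at order $\alpha$ in $p+q\xi$. This is exactly what the strict inequality $\rmo(\xi)>\alpha$ provides, since it forces $\rmo(q\xi)>\alpha$ whenever $q\ne 0$, so the order of $\eta$ is detected by $p$ alone; the membership $\rmo(p)\in H$ then collides with $\alpha\notin H$. Everything else is formal, so I expect no serious obstacle beyond keeping the order estimates honest.
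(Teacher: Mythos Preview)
Your proof is correct. The overall strategy matches the paper's: both obtain $b-a\ge\alpha$ from $b-a\in H_1\setminus H$ (since every element of $\langle 3,\alpha\rangle$ not divisible by $3$ is at least $\alpha$), and both handle the reverse inequality by picking $\eta\in B$ of order $\alpha$ and deriving a contradiction from a decomposition of $\eta$ under the assumption $b-a>\alpha$. The difference lies in which decomposition is used. You work with the $A$-module splitting $B=A+A\xi$, writing $\eta=p+q\xi$ with $p,q\in A$; since $\rmo(q\xi)\ge b-a>\alpha$, you read off $\rmo(p)=\alpha\in v(A)=H$, contradicting $\alpha\notin H$. The paper instead expands $\eta$ inside the maximal ideal $\m_B=(t^3,t^\ell,\xi)B$, writing $\eta=t^3\varphi+t^\ell\psi+\xi\delta$ with coefficients in $B$, and then needs the inequality $\alpha<\ell$ from Lemma~\ref{2.2} to isolate the $t^3\varphi$ term and conclude $\alpha-3\in H_1$, forcing $\alpha\equiv 0\pmod 3$. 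Your variant is a bit more economical: it bypasses the appeal to $\alpha<\ell$ and lands directly on the contradiction $\alpha\in H$.
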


\begin{proof}
Since $b-a \in H_1 =\left<3,\alpha \right>\setminus H=\left<3, \ell\right>$, we have $b-a \ge \alpha$. We choose $\eta \in B$, so that $\rmo(\eta) = \alpha$. Then, $\eta \in \m_B= (t^3, t^{\ell}, \xi)B$. Therefore, if $b-a > \alpha$, passing to the expression 
$$\eta  = t^3\varphi + t^{\ell}\psi + \xi\delta $$
with $\varphi, \psi, \delta \in B$, we have $\rmo(\varphi) = \alpha - 3 \in H_1=\left<3,\alpha \right>$ since $\alpha < \ell$ by Lemma \ref{2.2}). Therefore, $\alpha \equiv 0 ~\mod~3$, which is absurd. Thus $\alpha = b-a$. See the proof of Lemma \ref{2.2} for the equality $B =k[[t^3,\xi]]$. 
\end{proof}

Here let us draw the shape of the semigroup $H=\left<3,\ell\right>$. The figures might be helpful for the reader to grasp the arguments making progress below. In the following figure, the numbers located in the gray part describe the elements of $H = \left<3, \ell \right>$, according to the four cases: $\ell = 3n+ 1$ where $n$ is odd, $\ell = 3n+ 1$ where $n$ is even, $\ell = 3n+ 2$ where $n$ is odd, and $\ell = 3n+ 2$ where $n$ is even, respectively.

$$
 \ytableausetup{mathmode, boxsize=2.5em}
 \begin{ytableau}
 *(lightgray) 0 & 1 &  2 \\
 *(lightgray) 3 & 4 & 5  \\
  \none[\vdots] & \none[\vdots] & \none[\vdots] \\
 *(lightgray) \scriptstyle 6q   &  \scriptstyle 6q+1 &  \scriptstyle 6q+2\\ 
 *(lightgray) \scriptstyle 6q+3  &   *(lightgray)  \scriptstyle \ell=6q+4 & \scriptstyle 6q+5\\
 *(lightgray)  \scriptstyle 6q+6    &   *(lightgray) \scriptstyle 6q + 7 & \scriptstyle 6q+8\\
     \none[\vdots] & \none[\vdots] & \none[\vdots] \\
      *(lightgray)  \scriptstyle 12q+3  &  *(lightgray)  \scriptstyle 12q+4 &  \scriptstyle 12q+5\\
 *(lightgray)  \scriptstyle 12q+6  &   *(lightgray) \scriptstyle 12q+7 &  *(lightgray) \scriptstyle 12q+8\\
 \end{ytableau} 
  \ \ \ \ \ \ \   
 \ytableausetup{mathmode, boxsize=2.5em}
 \begin{ytableau}
 *(lightgray) 0 & 1 &  2 \\
 *(lightgray) 3 & 4 & 5  \\
  \none[\vdots] & \none[\vdots] & \none[\vdots] \\
 *(lightgray) \scriptstyle 6q-3   &  \scriptstyle 6q-2 &  \scriptstyle 6q-1\\ 
 *(lightgray) \scriptstyle 6q  &   *(lightgray)  \scriptstyle \ell=6q+1 & \scriptstyle 6q+2\\
 *(lightgray)  \scriptstyle 6q+3    &   *(lightgray) \scriptstyle 6q + 4 & \scriptstyle 6q+5\\
     \none[\vdots] & \none[\vdots] & \none[\vdots] \\
      *(lightgray)  \scriptstyle 12q-3  &  *(lightgray)  \scriptstyle 12q-2 &  \scriptstyle 12q-1\\
 *(lightgray)  \scriptstyle 12q  &   *(lightgray) \scriptstyle 12q+1 &  *(lightgray) \scriptstyle 12q+2\\
 \end{ytableau} 
  \ \ \ \ \ \ \ 
 \ytableausetup{mathmode, boxsize=2.5em}
 \begin{ytableau}
 *(lightgray) 0 & 1 &  2 \\
 *(lightgray) 3 & 4 & 5  \\
  \none[\vdots] & \none[\vdots] & \none[\vdots] \\
 *(lightgray) \scriptstyle 6q   &  \scriptstyle 6q+1 &  \scriptstyle 6q+2\\ 
 *(lightgray) \scriptstyle 6q+3  &     \scriptstyle 6q+4 & *(lightgray)\scriptstyle \ell=6q+5\\
 *(lightgray)  \scriptstyle 6q+6    &    \scriptstyle 6q + 7 & *(lightgray) \scriptstyle 6q+8\\
     \none[\vdots] & \none[\vdots] & \none[\vdots] \\
      *(lightgray)  \scriptstyle 12q+6  &    \scriptstyle 12q+7 & *(lightgray)  \scriptstyle 12q+8\\
 *(lightgray)  \scriptstyle 12q+9  &   *(lightgray) \scriptstyle 12q+10 &  *(lightgray) \scriptstyle 12q+11\\
 \end{ytableau}  
  \ \ \ \ \ \ \ 
 \ytableausetup{mathmode, boxsize=2.5em}
 \begin{ytableau}
 *(lightgray) 0 & 1 &  2 \\
 *(lightgray) 3 & 4 & 5  \\
  \none[\vdots] & \none[\vdots] & \none[\vdots] \\
 *(lightgray) \scriptstyle 6q-3   &  \scriptstyle 6q-2 &  \scriptstyle 6q-1\\ 
 *(lightgray) \scriptstyle 6q  &     \scriptstyle 6q+1 & *(lightgray) \scriptstyle \ell=6q+2\\
 *(lightgray)  \scriptstyle 6q+3    &   \scriptstyle 6q + 4 & *(lightgray)\scriptstyle 6q+5\\
     \none[\vdots] & \none[\vdots] & \none[\vdots] \\
      *(lightgray)  \scriptstyle 12q  &  \scriptstyle 12q+1 &  *(lightgray)   \scriptstyle 12q+2\\
 *(lightgray)  \scriptstyle 12q+3  &   *(lightgray) \scriptstyle 12q+4 &  *(lightgray) \scriptstyle 12q+5\\
 \end{ytableau} 
 $$
\vspace{0.1em}
\begin{center}
$\ell = 3n+ 1$, $n$ is odd, \ \   $\ell = 3n+ 1$, $n$ is even, \ \  $\ell = 3n+ 2$, $n$ is odd, \ \  $\ell = 3n+ 2$, $n$ is even,
\end{center}
 \ \ \ and \  $q = \frac{n-1}{2}$ \  \ \  \  \ \ \ \ \  \  \  \ \ and  \  $q = \frac{n}{2}$ \ \ \ \ \ \ \ \ \ \ \ \ \ \ \    and \  $q = \frac{n-1}{2}$ \ \ \ \ \ \ \ \ \ \ \  \ \ and \  $q = \frac{n}{2}$

\vspace{1em}

The proof of Assertion (3) (resp. Assertion (4)) in the following lemma is similar to that of Assertion (2) (resp. Assertion (1)). Let us include brief proofs.

\begin{lem}\label{3.4}
The following assertions hold true. 
\begin{enumerate}[$(1)$]
\item Suppose that $\ell = 3n + 1$ where $n \ge 3$ is odd. Let $q = \frac{n-1}{2}$.
\begin{enumerate}
\item[$(\rm i)$] If $\alpha \equiv 1$ $\mod$ $3$, then $\alpha = 3q+1 + 3j$ for some $1 \le j \le q$. 
\item[$(\rm ii)$] If $\alpha \equiv 2$ $\mod$ $3$, then $\alpha = 3q+2$.
\end{enumerate}
\item Suppose that $\ell = 3n + 1$ where $n \ge 2$ is even. Let $q = \frac{n}{2}$. Then $\alpha = 3q+1 + 3j$ for some $0 \le j \le q-1$. 
\item Suppose that $\ell = 3n + 2$ where $n \ge 3$ is odd. Let $q = \frac{n-1}{2}$. Then $\alpha = 3q+2 + 3j$ for some $1 \le j \le q$. 
\item Suppose that $\ell = 3n + 2$ where $n \ge 2$ is even. Let $q = \frac{n}{2}$.
\begin{enumerate}
\item[$(\rm i)$] If $\alpha \equiv 1$ $\mod$ $3$, then $\alpha = 3q+1$.
\item[$(\rm ii)$] If $\alpha \equiv 2$ $\mod$ $3$, then $\alpha = 3q+2 + 3j$ for some $0 \le j \le q-1$. 
\end{enumerate}
\end{enumerate}
\end{lem}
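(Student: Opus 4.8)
The plan is to squeeze everything out of two facts already in hand: the bound $\tfrac{\ell}{2}\le\alpha<\ell$ of Lemma~\ref{2.2}, and the containment of semigroups. By Proposition~\ref{3.3} we have $H_1=\langle 3,\alpha\rangle$, and since $A\subseteq B$ we get $H=\langle 3,\ell\rangle\subseteq H_1$, i.e. $\ell\in\langle 3,\alpha\rangle$. Hence I would write
$$
\ell = 3s+\alpha t, \qquad s,t\in\Bbb Z_{\ge 0}.
$$
Because $\ell\not\equiv 0\pmod 3$ we must have $t\ge 1$, and because $\alpha\ge\tfrac{\ell}{2}$ we get $\ell\ge\alpha t\ge\tfrac{\ell}{2}t$, forcing $t\le 2$. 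Thus $t\in\{1,2\}$, and the whole lemma reduces to analysing these two possibilities. Reducing the displayed equation modulo $3$ and using $\alpha\not\equiv 0$ shows moreover that $t$ is \emph{determined} by the residue of $\alpha$: one has $t=1$ exactly when $\alpha\equiv\ell\pmod 3$, and $t=2$ exactly when $\alpha\not\equiv\ell\pmod 3$. So the two branches $t=1,2$ are precisely the two admissible residue classes of $\alpha$ in each case.

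First I would dispose of $t=2$. Here $\ell=3s+2\alpha$ with $\alpha\ge\tfrac{\ell}{2}$ forces $s=0$, hence $\alpha=\tfrac{\ell}{2}$; in particular this branch occurs only when $\ell$ is even, which happens exactly in the settings of (1) and (4). Substituting $\ell=6q+4$ gives $\alpha=3q+2$ (so $\alpha\equiv 2$), which is branch (1)(ii); substituting $\ell=6q+2$ gives $\alpha=3q+1$ (so $\alpha\equiv 1$), which is branch (4)(i). When $\ell$ is odd, i.e. in the settings of (2) and (3), the case $t=2$ cannot arise, which is why those two assertions have no residue split.

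For $t=1$ I would set $\alpha=\ell-3s$ and translate $\tfrac{\ell}{2}\le\alpha<\ell$ into a range for $s$: the strict upper bound gives $s\ge 1$, and $\alpha\ge\tfrac{\ell}{2}$ gives $3s\le\tfrac{\ell}{2}$, hence $s\le\lfloor\ell/6\rfloor=q$ (uniformly in all four parametrisations). Since $t=1$ forces $\alpha\equiv\ell\pmod 3$, this branch automatically produces $\alpha\equiv 1$ when $\ell\equiv 1$ (branches (1)(i) and (2)) and $\alpha\equiv 2$ when $\ell\equiv 2$ (branches (3) and (4)(ii)). Finally, rewriting $\alpha=\ell-3s$ in the normal form $3q+1+3j$ or $3q+2+3j$ converts $s\in\{1,\dots,q\}$ into the stated range of $j$ in each case; for instance $\ell=6q+4$ gives $\alpha=3q+1+3(q+1-s)$, so $j=q+1-s$ runs over $\{1,\dots,q\}$, matching (1)(i), and the remaining three are identical computations. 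This is exactly the sense in which, as the preceding remark indicates, (3) follows the template of (2) and (4) follows the template of (1).

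The bookkeeping is entirely routine; the only point needing care is passing from $\tfrac{\ell}{2}\le\alpha$ to the index bound on $j$ when $\ell$ is odd, where one uses $\lceil\ell/2\rceil$ in place of $\ell/2$ (this is what makes the lower $j$ differ between the even and odd settings). I do not expect a genuine obstacle: the one substantive input—that not every non-multiple of $3$ in $[\tfrac{\ell}{2},\ell)$ is admissible—is carried entirely by the containment $\ell\in\langle 3,\alpha\rangle$, which via the bound $t\le 2$ pins $\alpha$ down to the listed arithmetic progressions.
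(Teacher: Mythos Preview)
Your argument is correct and rests on the same two inputs the paper uses: the bound $\tfrac{\ell}{2}\le\alpha<\ell$ from Lemma~\ref{2.2} and the membership $\ell\in\langle 3,\alpha\rangle$. The paper, too, writes $\ell=3\varphi+\alpha\psi$ and rules out $\psi\ge 2$ (resp.\ $\psi=1$) by the same size estimate; the difference is purely organisational. You front-load the observation that $t\in\{1,2\}$ and that $t$ is pinned down modulo~$3$ by the residue of $\alpha$, and then read off all four cases at once, whereas the paper treats each of (1)--(4) separately, in each instance assuming the ``wrong'' residue or the ``wrong'' value of $\alpha$ and deriving a contradiction from $\ell=3\varphi+\alpha\psi$. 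Your packaging is tidier and makes the parallelism between (1)/(4) and (2)/(3) explicit, but the mathematical content is identical. One small point of care: your uniform claim $s\le\lfloor\ell/6\rfloor=q$ is indeed correct in all four parametrisations, but it is worth displaying the four values of $\ell/6$ once so the reader sees immediately why the floor equals $q$ in each case.
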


\begin{proof}
$(1)$~$(\rm i)$~This readily follows from the fact that $3q +2 = \frac{\ell}{2} \le \alpha < \ell = 6q+4$, thanks to Lemma \ref{2.2}.  

$(1)$~$(\rm ii)$~We write $\alpha = 3 \beta + 2$. Then $\beta \ge q$, since $3q+2 \le \alpha$. Assume $\beta > q$. Since $\ell = 6q + 4 \in H \subseteq H_1=\left<3, \alpha \right>$, we have $\ell = 3 \varphi + \alpha \psi$ for some $\varphi \ge 0$ and $\psi \ge 1$. If $\psi \ge 2$, then 
$$
\alpha \psi \ge 2 \alpha = 6\beta + 4 > 6q + 4 = \ell \ge \alpha \psi
$$
which is absurd. Hence, $\psi = 1$, and therefore $\ell = 6q+ 4 = 3 \varphi + ( 3 \beta + 2)$, which is impossible. Thus, $\alpha= 3q+2$.

$(2)$~Since $3q + \frac{1}{2} = \frac{\ell}{2} \le \alpha < \ell = 6q + 1$, it suffices to show $\alpha \equiv 1$ $\mod$ $3$. Assume $\alpha \equiv 2$ $\mod$ $3$ and let $\alpha = 3 \beta + 2$. We write $\ell = 3 \varphi + \alpha \psi$ with $\varphi \ge 0$ and $\psi \ge 1$. If $\psi \ge 2$, then 
$$
\alpha \psi \ge 2 \alpha = 6\beta + 4 > 6q + 1 = \ell \ge \alpha \psi
$$
which is absurd. Hence, $\psi = 1$, so that $\ell =6q+1= 3 \varphi + (3 \beta + 2)$,  which is impossible. Thus, $\alpha \equiv 1$ $\mod$ $3$.

$(3)$~We have $3q + \frac{5}{2} = \frac{\ell}{2} \le \alpha < \ell = 6q + 5$ and it suffices to show $\alpha \equiv 2$ $\mod$ $3$. Assume $\alpha \equiv 1$ $\mod$ $3$ and write $\alpha = 3 \beta + 1$. Notice that $\beta > q$, since  $3q + 3 \le \alpha$. Let us write $\ell = 3 \varphi + \alpha \psi$ with $\varphi \ge 0$ and $\psi \ge 1$. If $\psi \ge 2$, then 
$$
\alpha \psi \ge 2 \alpha = 6\beta + 2 \ge 6(q+1) + 2 > \ell \ge \alpha \psi,
$$
which is absurd.  Hence, $\psi = 1$, so that $\ell = 6q+5 =3 \varphi + (3 \beta + 1)$, which is  impossible. Thus, $\alpha \equiv 2$ $\mod$ $3$.

$(4)$~$(\rm i)$~We write $\alpha = 3 \beta + 1$ and assume that $\beta > q$. Let $\ell = 3 \varphi + \alpha \psi$ with $\varphi \ge 0$ and $\psi \ge 1$. If $\psi \ge 2$, then 
$$
\alpha \psi \ge 2 \alpha = 6\beta + 2 > 6q + 2 = \ell \ge \alpha \psi,
$$
which is absurd. Hence, $\psi = 1$, so that $\ell = 6q+ 5=3 \varphi + ( 3 \beta + 1)$, which impossible. Thus, $\alpha= 3q+1$.

$(4)$~$(\rm ii)$~~This readily follows from the fact that $3q + 1 = \frac{\ell}{2} \le \alpha < \ell = 6q+2$. 
\end{proof}

Combining Proposition \ref{3.3} with Lemma \ref{3.4}, we are able to restrict possible semigroups $H_1$.

\begin{prop}\label{3.5}
\begin{enumerate}[$(1)$]
\item Suppose that $\ell = 3n + 1$ where $n \ge 3$ is odd. Let $q = \frac{n-1}{2}$. Then
\begin{center}
\hspace{-1.5em}
$H_1 = \left<3, b-a\right>$ where $b-a = 3q+2$ or $b-a = 3q+1 + 3j$ for some $1 \le j \le q$. 
\end{center}
\item Suppose that $\ell = 3n + 1$ where $n \ge 2$ is even. Let $q = \frac{n}{2}$. Then
\begin{center}
\hspace{-7.8em}
$H_1 = \left<3, b-a\right>$ where $b-a = 3q+1 + 3j$  for some $0 \le j \le q-1$. 
\end{center}
\item Suppose that $\ell = 3n + 2$ where $n \ge 3$ is odd. Let $q = \frac{n-1}{2}$. Then
\begin{center}
\hspace{-9.5em}
$H_1 = \left<3, b-a\right>$ where $b-a = 3q+2 + 3j$  for some   $1 \le j \le q$. 
\end{center}
\item Suppose that $\ell = 3n + 2$ where $n \ge 2$ is even. Let $q = \frac{n}{2}$. Then
\begin{center}
\hspace{-0.2em}
$H_1 = \left<3, b-a\right>$ where $b-a = 3q+1$ or $b-a = 3q+2 + 3j$ for some $0 \le j \le q-1$. 
\end{center}
\end{enumerate}
\end{prop}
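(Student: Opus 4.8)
The plan is to read Proposition \ref{3.5} as a direct consolidation of the two preceding results, so the proof should amount to a substitution together with a small case-merge rather than any new argument. First I would invoke Proposition \ref{3.3}, which identifies $\alpha$ with $b-a$ and gives $H_1 = \left<3, b-a\right>$; this lets me rewrite every occurrence of $\alpha$ in Lemma \ref{3.4} as $b-a$. The only remaining work is then to match the four cases of Lemma \ref{3.4} against the four cases of the present statement.

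In cases $(2)$ and $(3)$ there is nothing to merge: Lemma \ref{3.4} already produces a single formula for $\alpha$ (namely $\alpha = 3q+1+3j$ with $0 \le j \le q-1$, resp.\ $\alpha = 3q+2+3j$ with $1 \le j \le q$), so substituting $\alpha = b-a$ yields the assertion verbatim. In cases $(1)$ and $(4)$, Lemma \ref{3.4} is split according to the residue of $\alpha$ modulo $3$. Here I would use the fact, established just before Proposition \ref{3.3} from the symmetry of $H_1$ (via \cite{Kunz}), that $\alpha \not\equiv 0 \pmod{3}$; hence $\alpha \equiv 1$ or $\alpha \equiv 2 \pmod{3}$, and these two sub-cases of Lemma \ref{3.4} exhaust all possibilities. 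Forming their disjunction produces exactly the ``or'' statements recorded in $(1)$ and $(4)$: in case $(1)$, $b-a = 3q+2$ (from the $\equiv 2$ branch) or $b-a = 3q+1+3j$ with $1 \le j \le q$ (from the $\equiv 1$ branch); in case $(4)$, $b-a = 3q+1$ (from the $\equiv 1$ branch) or $b-a = 3q+2+3j$ with $0 \le j \le q-1$ (from the $\equiv 2$ branch).

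Since the substantive content was already carried out in Lemmas \ref{2.2}--\ref{3.4}, there is no genuine obstacle here; the proof is pure bookkeeping. The only point demanding a moment's care is to confirm that, in the two merged cases, the resulting disjunction is both complete (no admissible $b-a$ is omitted, which is guaranteed because $\{1,2\}$ exhausts the nonzero residues modulo $3$) and non-redundant (the two branches land in disjoint residue classes, so they cannot overlap). With that verified, the statement follows immediately.
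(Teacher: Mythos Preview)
Your proposal is correct and matches the paper's approach exactly: the paper presents Proposition \ref{3.5} without a separate proof, introducing it with the sentence ``Combining Proposition \ref{3.3} with Lemma \ref{3.4}, we are able to restrict possible semigroups $H_1$,'' which is precisely the substitution-and-merge you describe.
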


The following theorem is the heart of the proof of Theorem \ref{e=3}. The proofs of Assertions (2), (3), and (4) in it  are essentially the same as that of Assertion (1). Nevertheless, because they are subtly different from each other, we would like to note proofs for all of them.

\begin{thm}\label{3.6}
\begin{enumerate}[$(1)$]
\item Suppose that $\ell = 3n + 1$ where $n \ge 3$ is odd. Let $q = \frac{n-1}{2}$. Then
\begin{center}
$(a, b) = (\ell, \ell + 3q + 2)$ or $(a,b) = (6i, \ell + 3i)$ for some $1 \le i \le q$. 
\end{center}
\item Suppose that $\ell = 3n + 1$ where  $n \ge 2$ is even. Let $q = \frac{n}{2}$. Then
\begin{center}
$(a, b) = (6i, \ell + 3i)$ for some $1 \le i \le q$. 
\end{center}
\item Suppose that $\ell = 3n + 2$ where  $n \ge 3$ is odd. Let $q = \frac{n-1}{2}$. Then
\begin{center}
$(a, b) =  (6i, \ell + 3i)$ for some $1 \le i \le q$. 
\end{center}
\item Suppose that $\ell = 3n + 2$ where  $n \ge 2$ is even. Let $q = \frac{n}{2}$. Then
\begin{center}
$(a, b) = (\ell, \ell + 3q + 1)$ or $(a,b=)(6i, \ell + 3i)$ for some $1 \le i \le q$. 
\end{center}
\end{enumerate}
\end{thm}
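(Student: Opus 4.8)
The plan is to reduce Theorem \ref{3.6} to the single length identity $\ell_A(A/I) = a + \alpha - \ell$, where $\alpha = b-a$, after which each of the four cases follows merely by substituting the admissible values of $\alpha$ supplied by Proposition \ref{3.5}. The structural input is already in hand: by Proposition \ref{3.3} we have $H_1 = \left<3, \alpha\right>$ with $\alpha = b-a$ and $B = k[[t^3, \xi]]$, and by the Setting $B = f^{-1}I$, whence $I = fB$. Since $\alpha$ is pinned to an explicit finite list in each case, it remains only to determine $a$ (equivalently $b = a+\alpha$) for each admissible $\alpha$.

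First I would compute $\ell_A(A/I)$ by length additivity along $I = fB \subseteq fV \subseteq V$ and $I \subseteq A \subseteq V$. Because $fV = t^aV$ we have $\ell_A(V/fV) = a$, while multiplication by $f$ yields the $A$-isomorphism $V/B \cong fV/fB$, so $\ell_A(fV/fB) = \ell_A(V/B) = \alpha - 1$, the number of gaps of $H_1 = \left<3,\alpha\right>$. Together with $\ell_A(V/A) = \ell - 1$, the number of gaps of $H = \left<3,\ell\right>$, this gives
$$
\ell_A(A/I) = \ell_A(V/I) - \ell_A(V/A) = (a + (\alpha - 1)) - (\ell - 1) = a + \alpha - \ell.
$$
Feeding this into Lemma \ref{2.4}, namely $a = 2\cdot\ell_A(A/I)$, produces $a = 2(a + \alpha - \ell)$, hence
$$
a = 2(\ell - \alpha), \qquad b = a + \alpha = 2\ell - \alpha.
$$
I would then note that $a$ is automatically even, and that $\frac{\ell}{2} \le \alpha < \ell$ (Lemma \ref{2.2}) forces $0 < a \le \ell < c$, which is exactly the range compatible with $\rme(A) = 3$, since by Lemma \ref{lemma0} any $a \ge c$ would force $\rme(A) = 2$.

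Finally I would carry out the bookkeeping: substitute the lists of Proposition \ref{3.5} into $a = 2(\ell - \alpha)$ and $b = 2\ell - \alpha$. The observation organizing the four cases is that $a = \ell$ occurs precisely when $\alpha = \frac{\ell}{2}$, which is possible only for even $\ell$; this accounts for the exceptional pair $(a,b) = (\ell, \ell + 3q + 2)$ in case $(1)$, where $\ell = 6q+4$, and $(\ell, \ell + 3q + 1)$ in case $(4)$, where $\ell = 6q+2$, and for its absence in cases $(2)$ and $(3)$, where $\ell = 6q+1$ and $\ell = 6q+5$ are odd. The remaining values $\alpha = \ell - 3i$ translate directly into $(a,b) = (6i, \ell + 3i)$, and the index range $1 \le j \le q$ (resp. $0 \le j \le q-1$) in Proposition \ref{3.5} becomes the range $1 \le i \le q$ in each case.

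I expect the main obstacle to be conceptual rather than computational: once the identity $\ell_A(A/I) = a + \alpha - \ell$ is in place, the values of $a$ and $b$ are completely forced, so the real work lies in justifying $I = fB$ and the two gap counts cleanly, and then in the careful—and, across the four cases, subtly different—matching of the index ranges. In particular I would double-check that the formula genuinely reproduces the admissible range of $i$ and does not silently introduce spurious solutions, which is the point at which the even/odd dichotomy for $\ell$ must be invoked.
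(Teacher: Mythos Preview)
Your proposal is correct and takes a genuinely different---and much more efficient---route than the paper. The paper proceeds by brute force: in each of the four cases it lists every even value $a \in H$ with $a < c$, pairs it with each admissible value of $b-a$ from Proposition \ref{3.5}, and then eliminates the bad pairs one at a time using the constraints $b \in H$, $2b-a \in H$, and the containment $t^{c-(b-a)}V \cap A \subseteq I$ of Lemma \ref{2.3}. This requires several pages of tables and case distinctions according to the residue of $a$ modulo $3$.

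Your argument bypasses all of this by exploiting the equality $I = fB$ to compute $\ell_A(A/I)$ directly: the chain $I = fB \subseteq fV \subseteq V$ together with $A \subseteq V$ gives $\ell_A(A/I) = a + (\alpha - 1) - (\ell - 1)$, whence Lemma \ref{2.4} forces $a = 2(\ell - \alpha)$ and $b = 2\ell - \alpha$. Substituting the list of $\alpha$'s from Proposition \ref{3.5} then yields the theorem in a few lines. What your approach buys is a closed formula for $(a,b)$ in terms of $\alpha$, making the four cases a matter of reindexing rather than elimination; what the paper's approach buys is that it never needs the genus computation for $B$, relying only on order comparisons. Your argument is the cleaner one, and the genus identity $\ell_A(V/B) = \alpha - 1$ is immediate from the filtration of $V/B$ by powers of $t$, since $v(B) = H_1$.
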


\begin{proof}
$(1)$~Since $0 < a < c = 2\ell-2 = 12q + 6$ and $a$ is even, we have $a = 6i ~(1 \le i \le 2q)$ or $a = \ell + 6i ~(0 \le i \le q)$. 
We first consider the case where $a = 6i ~(1 \le i \le 2q)$. We look at the following table. 
\vspace{0.3em}
$$
\begin{tabular}{|c|c|c|}
\hline
$a$ & $6i$  & $6i$ \\ \hline 
$b-a$ & $3q+2$  & $3q+1 + 3j~(1 \le j \le q)$  \\ \hline 
$b$ & $3q+2 + 6i$ &  $3q+1 + 6i + 3j~(1 \le j \le q)$ \\ \hline
$2b-a$ & $6q+4 + 6i$ &  $6q+2 + 6i + 6j~(1 \le j \le q)$ \\ \hline
$c-(b-a)$ & $9q + 4$ &  $9q+5 - 3j~(1 \le j \le q)$ \\ \hline
\end{tabular} \vspace{0.3em}
$$
Here, the values of the second (resp. the third) column indicate the possible values of $b, 2b-a$, and $c -(b-a)$, when $a = 6i$ and $b-a = 3q+2$ (resp. $a = 6i$ and $b-a = 3q+1 + 3j$). Our aim is to prove that $(a,b) = (6i, \ell + 3i)$ for some $1 \le i \le q$.

To begin with, we will check that $b-a \ne 3q+2$. Assume the contrary. Then $9q + 6 \in H$ and $b- (9q + 6) = 6(i-q) - 4$. Hence $i \le q$. In fact, if $i > q$, then 
$$
9q + 4-a \in H, 
$$ 
and $9q + 4- a \ge 6q + 4 = \ell$, so that $3q \ge 6i \ge 6(q+1)$. This is absurd. Therefore, $i \le q$. Consequently
$$
b = 3q+2 + 6i \le 9q + 2 < 12 q +5 = c-1,
$$
whence $b \not\in H$, because $b \equiv 2$ $\mod$ $3$. This is absurd and hence $b-a \ne 3q+2$.

Therefore, $b-a =3q+1 + 3j$ for some $1 \le j \le q$, so that
$$
b - (c-(b-a) + 2) = b - (9q+7 - 3j) = 6 (i+j - q - 1).
$$
If $i > q$, then $b > 9q+7 - 3j$. Since $t^{c-(b-a) + 2} \in I$, we have $9q+7 - 3j - 6i \in H$, so that 
$$
9q+7 - 3j - 6i \ge \ell = 6q + 4,
$$
which yields $q \ge j + 2i -1 \ge 2i > 2q$. This is absurd. Thus, $i \le q$.

Because $2b-a \in H$ and $2b-a \equiv 2$ $\mod$ $3$, we have $2b-a \ge 2 \ell = 12 q + 8$. Hence, because $2b-a = 6q+2 + 6i + 6j$, we have $6(i+j - q -1) \ge 0$. In particular, $i+j \ge q+1$. Now assume that $i+j > q+1$. Then, since $b > c-(b-a) + 2$ and $t^{c-(b-a) + 2} \in I$, we get
$$
c-(b-a) + 2 - a \in H \ \ \text{and} \ \ c-(b-a) + 2 - a \equiv 1 \ \mod \ 3
$$
which implies $c-(b-a) + 2 - a \ge \ell = 6q + 4$. Consequently, $q \ge 2i + j-1$, whence $i+j > q + 1 \ge 2i + j$. This is absurd. Thus, $i+j = q + 1$, whence $(a,b)=(6i, \ell + 3i)$ with $1 \le i \le q$.

Secondly,  we consider the case where $a = \ell + 6i ~(0 \le i \le q)$. This time we, have the  following table.
$$
\begin{tabular}{|c|c|c|}
\hline
$a$ & $\ell + 6i$  & $\ell + 6i$ \\ \hline 
$b-a$ & $3q+2$  & $3q+1 + 3j~(1 \le j \le q)$  \\ \hline 
$b$ & $9q+6 + 6i$ &  $9q+5 + 6i + 3j~(1 \le j \le q)$ \\ \hline
$2b-a$ & $12q+8 + 6i$ &  $12q+6 + 6i + 6j~(1 \le j \le q)$ \\ \hline
$\rmc(H)-(b-a)$ & $9q + 4$ &  $9q+5 - 3j~(1 \le j \le q)$ \\ \hline
\end{tabular} \vspace{0.3em}
$$
Suppose $b-a = 3q+2$. We then have 
$$
b- (c-(b-a) + 2) = 6i \in H.
$$ 
If $i > 0$, then $(c-(b-a) + 2) - a \in H$ and $(c-(b-a) + 2) - a < 3q + 2$. This is impossible, because $(c-(b-a) + 2) - a \equiv 2$ $\mod$ $3$. Therefore, $i=0$ and $(a, b)=(\ell, \ell + 3q + 2)$.

Lastly, we assume that $b-a=3q+1 + 3j~(1 \le j \le q)$ and seek a contradiction. Since $i+j > 0$, we have
$$
b- (c-(b-a) + 2) = 6(i+j) -2 >0,
$$
so that $(c-(b-a) + 2)  - a = 3(q-j-2i +1)\in H$. Hence $q \ge j + 2i -1$. On the other hand, since $b \in H$ and $b \equiv 2$ $\mod$ $3$, we get $b \ge 2 \ell = 12 q + 8$. Therefore 
$$
6i + 3j -3q -3 = b - (12 q + 8) \ge 0,
$$
which yields $q \le 2i + j -1$. Thus $q = 2i + j -1$. We then, however, have
$$
c-(b-a) + 1 < a < b \ \  \text{and} \ \ t^{c-(b-a) + 1} \in I=(f,g),
$$
which is impossible. Thus, $b-a \ne 3q+1 + 3j$ for any $1 \le j \le q$, and hence $(a,b)= (\ell, \ell + 3q + 2)$.

$(2)$~We shall prove that $a = 6i$  for some $1 \le i \le q$. Firstly, we assume that $a \equiv 0$ $\mod$ $3$. Hence, $a = 6i~(1 \le i \le 2q-1)$, because $a$ is even. We look at the following table
$$
\begin{tabular}{|c|c|}
\hline
$a$ & $6i$   \\ \hline 
$b-a$ & $3q+1 + 3j~(0 \le j \le q-1)$  \\ \hline 
$b$ &  $3q+1 + 6i + 3j~(0 \le j \le q-1)$ \\ \hline
$2b-a$ &   $6q+2 + 6i + 6j~(0 \le j \le q-1)$ \\ \hline
$c-(b-a)$  &  $9q-1 - 3j~(0 \le j \le q-1)$ \\ \hline
\end{tabular} \vspace{0.3em}
$$
and notice that 
$$
b - (c-(b-a) + 2) = b - (9q+1 - 3j) = 6 (i+j - q).
$$
If $i > q$, then $b > 9q+1 - 3j$. Since $t^{c-(b-a) + 2} \in I$, we get $9q+1 - 3j - 6i \in H$. Therefore
$$
9q+1 - 3j - 6i \ge \ell = 6q + 1,
$$
which yields $q \ge j + 2i \ge 2i > 2q$. This is absurd. Hence, $i \le q$. Because $2b-a \in H$ and $2b-a \equiv 2$ $\mod$ $3$, we have 
$$
2b-a \ge 2 \ell = 12 q + 2.
$$
Therefore, since $2b-a = 6q+2 + 6i + 6j$, we get $6(i+j - q) \ge 0$. In particular, $i+j \ge q$. Suppose now that $i+j > q$. Then, since $b > c-(b-a) + 2$ and $t^{c-(b-a) + 2} \in I$, we have
$$
c-(b-a) + 2 - a \in H \ \ \text{and} \ \ c-(b-a) + 2 - a \equiv 1 \ \mod \ 3,
$$
so that $c-(b-a) + 2 - a \ge \ell = 6q + 1$. Hence $q \ge 2i + j$. Thus $i+j > q  \ge 2i + j$. This is absurd. Consequently, $i+j = q$. Therefore, $(a,b)  = (6i, \ell + 3i)$, and $1 \le i \le q$.

We must show that $a \not\equiv 1~\mod~3$. Assume that $a \equiv 1$ $\mod$ $3$, that is $a = \ell + 3 + 6i$ with $0 \le i \le q-1$. We then have the following table.
$$
\begin{tabular}{|c|c|c|}
\hline
$a$ &  $\ell + 3+ 6i$ \\ \hline 
$b-a$ &  $3q+1 + 3j~(0 \le j \le q-1)$  \\ \hline 
$b$ &   $9q+5 + 6i + 3j~(0 \le j \le q-1)$ \\ \hline
$2b-a$ &   $12q+6 + 6i + 6j~(0 \le j \le q-1)$ \\ \hline
$c-(b-a)$ &   $9q-1 - 3j~(0 \le j \le q-1)$ \\ \hline
\end{tabular} \vspace{0.3em}
$$
Since 
$$
b- (c-(b-a) + 2) = 6(i+j) + 4 >0,
$$
we have $(c-(b-a) + 2) - a = 3(q-j-2i -1)\in H$. Therefore, $q \ge j + 2i +1$. 
Because $b \in H$ and $b \equiv 2$ $\mod$ $3$, we furthermore have $b \ge 2 \ell = 12 q + 2$, whence 
$$
6i + 3j -3q +3 = b - (12 q + 2) \ge 0,
$$
which yields $q \le 2i + j +1$. Thus, $q = 2i + j +1$, and we get
$$
c-(b-a) + 1 < a < b \ \  \text{and} \ \ t^{c-(b-a) + 1} \in I
$$
which is impossible. Consequently, $a \not\equiv 1$ $\mod$ $3$.

$(3)$~Suppose that $a \equiv 0$ $\mod$ $3$. Then $a = 6i~(1 \le i \le 2q+1)$. We consider the table below.
$$
\begin{tabular}{|c|c|}
\hline
$a$ & $6i$   \\ \hline 
$b-a$ & $3q+ 2 + 3j~(1 \le j \le q)$  \\ \hline 
$b$ &  $3q+2 + 6i + 3j~(1 \le j \le q)$ \\ \hline
$2b-a$ &   $6q+4 + 6i + 6j~(1 \le j \le q)$ \\ \hline
$\rmc(H)-(b-a)$  &  $9q+6 - 3j~(1 \le j \le q)$ \\ \hline
\end{tabular} \vspace{0.3em}
$$
Notice that
$$
b - (c-(b-a) + 2) = b - (9q+8 - 3j) = 6 (i+j - q -1). 
$$
If $i > q$, then $b > 9q+8 - 3j$. Since $t^{c-(b-a) + 2} \in I$, we have $9q+8 - 3j - 6i \in H$. Hence
$$
9q+8 - 3j - 6i \ge \ell = 6q + 5,
$$
which yields $q \ge j + 2i -1 \ge 2i > 2q$. This is absurd. Hence, $i \le q$.

Because $2b-a \in H$ and $2b-a \equiv 1$ $\mod$ $3$, we have 
$$
2b-a \ge 2 \ell = 12 q + 10.
$$
Therefore, since $2b-a = 6q+4 + 6i + 6j$, we get $6(i+j - q -1) \ge 0$. In particular, $i+j \ge q+1$. If $i+j > q+1$, then since $b > c-(b-a) + 2$ and $t^{c-(b-a) + 2} \in I$, we have
$$
c-(b-a) + 2 - a \in H \ \ \text{and} \ \ c-(b-a) + 2 - a \equiv 2 \ \mod \ 3,
$$
whence $c-(b-a) + 2 - a \ge \ell = 6q + 5$. Therefore, $q+1 \ge 2i + j$, so that
$$
i+j > q + 1  \ge 2i + j
$$
which is absurd. Consequently, we get $i+j = q+1$, and therefore $(a,b)=(6i,\ell + 3i)$ with $1 \le i \le q$. 

We shall show that $a \not\equiv 2$ $\mod$ $3$. Assume the contrary. We then have $a = \ell + 3 + 6i$ with $0 \le i \le q-1$. We look at the following table.
$$
\begin{tabular}{|c|c|c|}
\hline
$a$ &  $\ell + 3+ 6i$ \\ \hline 
$b-a$ &  $3q+2 + 3j~(1 \le j \le q)$  \\ \hline 
$b$ &   $9q+10 + 6i + 3j~(1 \le j \le q)$ \\ \hline
$2b-a$ &   $12q+12 + 6i + 6j~(1 \le j \le q)$ \\ \hline
$\rmc(H)-(b-a)$ &   $9q+6 - 3j~(1 \le j \le q)$ \\ \hline
\end{tabular} \vspace{0.3em}
$$
Because
$$
b- (c-(b-a) + 2) = 6(i+j) + 2 >0,
$$
we have $(c-(b-a) + 2) - a = 3(q-j-2i)\in H$, whence $q \ge j + 2i$. On the other hand, since $b \in H$ and $b \equiv 1$ $\mod$ $3$, we get $b \ge 2 \ell = 12 q + 10$. Consequently
$$
6i + 3j -3q +3 = b - (12 q + 10) \ge 0,
$$
which yields $q \le 2i + j$. Thus, $q = 2i + j $, so that 
$$
c-(b-a)  < a < b \ \  \text{and} \ \ t^{c-(b-a) } \in I
$$
which is impossible. Thus, $a \not\equiv 2$ $\mod$ $3$.

$(4)$~First let us consider the case where $a \equiv 0$ $\mod$ $3$. Hence, $a = 6i ~(1 \le i \le 2q)$. Look at the following table.
$$
\begin{tabular}{|c|c|c|}
\hline
$a$ & $6i$  & $6i$ \\ \hline 
$b-a$ & $3q+1$  & $3q+2 + 3j~(0 \le j \le q-1)$  \\ \hline 
$b$ & $3q+1+ 6i$ &  $3q+2 + 6i + 3j~(0 \le j \le q-1)$ \\ \hline
$2b-a$ & $6q+2 + 6i$ &  $6q+4 + 6i + 6j~(0 \le j \le q-1)$ \\ \hline
$c-(b-a)$ & $9q + 1$ &  $9q - 3j~(0 \le j \le q-1)$ \\ \hline
\end{tabular} \vspace{0.3em}
$$
Suppose that $b-a = 3q+1$. Then, since
$$
b- (c-(b-a) + 1) = b- (9q + 2) = 6(i-q) - 1,
$$
we get $i \le q$. Indeed, if $i > q$, then $9q + 2-a \in H$, which implies 
$$
9q + 2- a \ge  \ell =6q + 2,
$$
so that $q \ge 2i \ge 2(q+1)$. This is absurd. Hence, $i \le q$. Consequently
$$
b = 3q+1 + 6i \le 9q + 1 < 12 q +1 = c-1,
$$
which yields $b \not\in H$ because $b \equiv 1$ $\mod$ $3$. This is, of course, impossible. Hence, $b-a \ne 3q+1$.

Therefore, $b-a =3q+2 + 3j$ for some $0 \le j \le q-1$, whence
$$
b - (c-(b-a) + 2) = b - (9q+2 - 3j) = 6 (i+j - q).
$$
If $i > q$, then $b > 9q+2 - 3j$. On the other hand, since $t^{c-(b-a) + 2} \in I$, we get $9q+2 - 3j - 6i \in H$. Consequently
$$
9q+2 - 3j - 6i \ge \ell = 6q + 2,
$$
which yields $q \ge j + 2i \ge 2i > 2q$. This is absurd. Thus, $i \le q$.

Since $2b-a \in H$ and $2b-a \equiv 1$ $\mod$ $3$, we have 
$$
2b-a \ge 2 \ell = 12 q + 4.
$$
Therefore, because $2b-a = 6q+4 + 6i + 6j$, we get $6(i+j - q) \ge 0$, whence  $i+j \ge q$. Suppose now that $i+j > q$. Then, since $b > c-(b-a) + 2$ and $t^{c-(b-a) + 2} \in I$, we have
$$
c-(b-a) + 2 - a \in H \ \ \text{and} \ \ c-(b-a) + 2 - a \equiv 2 \ \mod \ 3,
$$
which implies 
$$
c-(b-a) + 2 - a \ge \ell = 6q + 2.
$$
Thus $q \ge 2i + j$, whence $i+j > q \ge 2i + j$. This is absurd. Consequently, $i+j = q$. Hence,  $(a,b)= (6i, \ell + 3i)$ with $1 \le i \le q$.

Lastly, we consider the case where $a \equiv 2$ $\mod$ $3$, that is  $a = \ell + 6i$ for some $0 \le i \le q-1$. This case, we have the following.
$$
\begin{tabular}{|c|c|c|}
\hline
$a$ & $\ell + 6i$  & $\ell + 6i$ \\ \hline 
$b-a$ & $3q+1$  & $3q+2 + 3j~(0 \le j \le q-1)$  \\ \hline 
$b$ & $9q+3 + 6i$ &  $9q+4 + 6i + 3j~(0 \le j \le q-1)$ \\ \hline
$2b-a$ & $12q+4 + 6i$ &  $12q+6 + 6i + 6j~(0 \le j \le q-1)$ \\ \hline
$\rmc(H)-(b-a)$ & $9q + 1$ &  $9q - 3j~(0 \le j \le q-1)$ \\ \hline
\end{tabular} \vspace{0.3em}
$$
Suppose that $b-a = 3q+1$. Then 
$$
b- (c-(b-a) + 2) = 6i \in H.
$$ 
If $i > 0$, then $(c-(b-a) + 2) - a \in H$ and $(c-(b-a) + 2) - a < 3q + 1$. This is impossible, because $(c-(b-a) + 2) - a \equiv 2$ $\mod$ $3$. Therefore, $i=0$ and $(a,b)=(\ell, \ell + 3q + 1)$.

Let us now assume $b-a=3q+2 + 3j~(0 \le j \le q-1)$ and seek a contradiction. Since 
$$
b- (c-(b-a) + 2) = 6(i+j) +2 >0,
$$
we get $(c-(b-a) + 2)  - a = 3(q-j-2i)\in H$. Hence, $q \ge j + 2i$. 

On the other hand, because $b \in H$ and $b \equiv 1$ $\mod$ $3$, we get $b \ge 2 \ell = 12 q + 4$. Therefore 
$$
6i + 3j -3q = b - (12 q + 4) \ge 0,
$$
which yields $q \le 2i + j $. Thus, $q = 2i + j$, and therefore
$$
c-(b-a)  < a < b \ \  \text{and} \ \ t^{c-(b-a) } \in I,
$$
which is a required contradiction. Consequently, $b-a \ne 3q+2 + 3j$ for any $0 \le j \le q-1$. This completes the proof Theorem \ref{3.6}.
\end{proof}

We now have the following, which guarantees that every Ulrich ideal $I$ of $A$ has one of the forms stated in Theorem \ref{e=3} below.

\begin{cor}\label{3.7}
\begin{enumerate}[$(1)$]
\item Suppose that $\ell = 3n + 1$ where $n \ge 3$ is odd. Let $q = \frac{n-1}{2}$.
\begin{enumerate}
\item[$(\rm i)$] If $(a, b) = (\ell, \ell + 3q + 2)$, then 
$$
I = (t^{\ell} + \alpha_1 t^{\ell + 2} + \alpha_2 t^{\ell + 5} + \cdots + \alpha_{q}t^{\ell + 3q -1}, t^{\ell + 3q+2}) 
$$
for some $\alpha_1, \alpha_2, \dots, \alpha_q \in k$.
\item[$(\rm ii)$] If $(a, b) = (6i, \ell + 3i)$ with $1 \le i \le q$, then
$$
I=(t^{6i}+ \alpha_0 t^{\ell} + \alpha_1 t^{\ell+3} + \cdots + \alpha_{i-1}t^{\ell + 3(i-1)}, t^{\ell + 3i})
$$
for some $\alpha_0, \alpha_1, \ldots, \alpha_{i-1} \in k$ such that $\alpha_0 \ne 0$.

\end{enumerate}
\item Suppose that $\ell = 3n + 1$ where $n \ge 2$ is even. Let $q = \frac{n}{2}$. If $(a, b) = (6i, \ell + 3i)$ with $1 \le i \le q$, then 
$$
I=(t^{6i}+ \alpha_0 t^{\ell} + \alpha_1 t^{\ell+3} + \cdots + \alpha_{i-1}t^{\ell + 3(i-1)}, t^{\ell + 3i})
$$
for some  $\alpha_0, \alpha_1, \ldots, \alpha_{i-1} \in k$ such that $\alpha_0 \ne 0$.
\item Suppose that $\ell = 3n + 2$ where $n \ge 3$ is odd. Let $q = \frac{n-1}{2}$. If $(a, b) =  (6i, \ell + 3i)$ with $1 \le i \le q$, then
$$
I = (t^{6i}+ \alpha_0 t^{\ell} + \alpha_1 t^{\ell+3} + \cdots + \alpha_{i-1}t^{\ell + 3(i-1)}, t^{\ell + 3i})
$$
for some  $\alpha_0, \alpha_1, \ldots, \alpha_{i-1} \in k$ such $\alpha_0 \ne 0$.

\item Suppose that $\ell = 3n + 2$ where $n \ge 2$ is even. Let $q = \frac{n}{2}$.
\begin{enumerate}
\item[$(\rm i)$] If $(a, b) = (\ell, \ell + 3q + 1)$, then
$$
I=(t^{\ell} + \alpha_1 t^{\ell + 1} + \alpha_2 t^{\ell + 4} + \cdots + \alpha_{q}t^{\ell + 3q -2}, t^{\ell + 3q+1}) 
$$
for some  $\alpha_1, \alpha_2, \dots, \alpha_q \in k$. 
\item[$(\rm ii)$] If $(a, b) = (6i, \ell + 3i)$ with $1 \le i \le q$, then
$$
I=(t^{6i}+ \alpha_0 t^{\ell} + \alpha_1 t^{\ell+3} + \cdots + \alpha_{i-1}t^{\ell + 3(i-1)}, t^{\ell + 3i}) 
$$
for some  $\alpha_0, \alpha_1, \ldots, \alpha_{i-1} \in k$ such that  $\alpha_0 \ne 0$.
\end{enumerate}
\end{enumerate}
\end{cor}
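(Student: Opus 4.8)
The proof proceeds case by case following the division in Theorem \ref{3.6}; since the four cases (and, within each, the two subcases according to whether $a\equiv 0$ or $a\not\equiv 0 \pmod 3$) are governed by the same mechanism, the plan is to carry out the representative cases (1)(ii), where $a=6i$, and (1)(i), where $a=\ell$, in detail, the others being entirely parallel with the numerology of Lemma \ref{3.4} and Theorem \ref{3.6} substituted. Throughout, fix an Ulrich ideal $I=(f,g)$ with $(a,b)=(\rmo(f),\rmo(g))$ prescribed by Theorem \ref{3.6}, set $c=2\ell-2$, and recall from Lemma \ref{2.3} that $\fkc=t^{c}V\subseteq I$ and, crucially, that the monomial ideal $\fkd:=t^{\,c-(b-a)}V\cap A\subseteq I$. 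Recall also that $I\subseteq IV=t^{a}V$, so every nonzero element of $I$ has order $\ge a$. The guiding idea is that, modulo $\fkd$, the generators $f,g$ can be cleaned up to the asserted normal form, after which the only genuinely new information is the non-vanishing of the leading correction coefficient $\alpha_0$.

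First I would normalize $g$. Writing $g=\gamma t^{b}+(\text{terms of order}>b)$ with $\gamma\in k^{\times}$, every higher term has order $>b\ge c-(b-a)$ and hence lies in $\fkd\subseteq I$, so $t^{b}\in I$ and we aim to replace $g$ by $t^{b}$. Next I would normalize $f$ by splitting it according to residues modulo $3$, say $f=f_{0}+f_{1}+f_{2}$ with $f_{r}$ collecting the monomials of exponent $\equiv r$. The part $f_{2}$ has order $\ge 2\ell>c$, so $f_{2}\in\fkc\subseteq I$ and is discarded; the part $f_{0}=t^{6i}u$ with $u\in k[[t^{3}]]$ a unit, so multiplying $f$ by $u^{-1}$ (which preserves $I$ and, since it does not change $\rmo(f)=a$, also the minimal reduction, exactly as in the unit manipulation of Example \ref{2.9}) makes the $\equiv 0$ part equal to $t^{6i}$. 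What remains is $f=t^{6i}+\tilde f_{1}$ with $\tilde f_{1}$ supported on exponents $\ell,\ell+3,\ell+6,\dots$; using the shape of $H=\langle 3,\ell\rangle$ recorded in the figures, exactly the exponents below $c-(b-a)=6q+2+3i$ survive, namely $\ell,\ell+3,\dots,\ell+3(i-1)$, while the higher ones lie in $\fkd$ and are absorbed. This produces $f''=t^{6i}+\alpha_0 t^{\ell}+\cdots+\alpha_{i-1}t^{\ell+3(i-1)}$ and $t^{b}$. In case (1)(i) the same procedure applies with the residues interchanged: the leading exponent $a=\ell$ is itself $\equiv 1$, one normalizes the $\equiv 1$ part to $t^{\ell}$, and the surviving corrections are the $\equiv 0$ monomials $\ell+2,\ell+5,\dots,\ell+3q-1$, with no constraint on their coefficients.

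To upgrade this to an equality $I=(f'',t^{b})$, the containment $(f'',t^{b})\subseteq I$ is clear from the construction, each step having kept the element inside $I$. For the reverse I would check $\fkd\subseteq(f'',t^{b})$, since then the original $f,g$—which differ from $f''$ and $t^{b}$ only by a unit factor and elements of $\fkd$—lie in $(f'',t^{b})$. The inclusion $\fkd\subseteq(f'',t^{b})$ I would prove by downward induction on the order of a monomial $t^{m}\in\fkd$, the base case being $\fkc\subseteq(f'',t^{b})$ and the inductive step writing $t^{m}=f''\,t^{m-a}-(\text{lower corrections in }\fkd)$ when $m-a\in H$, or $t^{m}=t^{b}\,t^{m-b}$ when $m-b\in H$; the arithmetic of $H$ guarantees one of these applies. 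Equivalently, one may compare colengths: $v((f''))=a+H$ and $v(t^{b}A)=b+H$, so a direct count against the figures gives $\ell_A\!\big(A/(f'',t^{b})\big)=3i=\ell_A(A/I)$ by Lemma \ref{2.4}, forcing equality.

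The crux, and the step I expect to be the main obstacle, is the non-vanishing $\alpha_0\ne 0$ in the subcases with $a=6i$. Here I would exploit that the Ulrich property gives, through $I/(f'')\cong A/I$, the colon equality $A:_A\xi=(f''):_A g=I$, where $\xi=g/f''=t^{b}/f''$. Suppose for contradiction that $\alpha_0=0$, so $f''=t^{6i}+\alpha_1t^{\ell+3}+\cdots$. The plan is to construct $x\in A$ with $\rmo(x)=a-3=6i-3$ and $x\xi\in A$: expanding $t^{\,a-3}\xi=t^{\,b-3}\big(1-\alpha_1t^{\ell-6i+3}+\cdots\big)$ one cancels its finitely many monomials whose exponent lies outside $H$ by adding higher-order corrections from $A$, and the point is precisely that the absence of a $t^{\ell}$ term in $f''$ makes the first obstructing monomial cancellable (whereas a nonzero $\alpha_0$ would place an uncancellable monomial at order $2\ell-3i-3\notin H$). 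Such an $x$ would then lie in $A:_A\xi=I$ while having $\rmo(x)=6i-3<a$, contradicting $\rmo(I)=a$; hence $\alpha_0\ne 0$. In the $a=\ell$ subcases no such condition is needed, since the leading exponent is already $\equiv 1\pmod 3$ and normalized to $1$, and the witness construction has no analogue. This completes the plan, the remaining cases (2), (3), (4) being obtained verbatim. The delicate point throughout is making the cancellation in this last step precise for all $i$ and all admissible $\alpha_1,\dots,\alpha_{i-1}$, which is where the bulk of the careful bookkeeping will reside.
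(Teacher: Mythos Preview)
Your normalization of $f$ and $g$ is essentially the paper's argument, carried out with the same tools (unit multiplication in $k[[t^3]]$ and the containment $t^{c-(b-a)}V\cap A\subseteq I$ from Lemma~\ref{2.3}). For the equality $I=(f'',t^b)$, the paper does not verify $\fkd\subseteq(f'',t^b)$ or count colengths; instead it keeps the generating set $(f,g)$ intact throughout the reductions (each step replaces $f$ by a unit multiple or $g$ by $g-hf$), arriving at $I=(f''+\rho,\,t^b+\eta)$ with $\rho,\eta\in\fkc\subseteq I$, and then invokes $\mu_A(I)=2$ together with $t^b\notin(f'',\fkc)$ to drop the conductor generators. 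Your colength alternative is not as automatic as you suggest: the crude bound $v((f'',t^b))\supseteq(a+H)\cup(b+H)$ only gives $\ell_A(A/(f'',t^b))\le 4i$, not $3i$, so you would still need to produce the missing values $2\ell,2\ell+3,\ldots,2\ell+3(i-1)$ in $v((f'',t^b))$ by hand.

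The genuine divergence is in the proof of $\alpha_0\ne 0$. The paper's argument is a two--line residue count: since $t^{2\ell}\in\fkc\subseteq I$, write $t^{2\ell}=f''\varphi+t^b\psi$ with $\varphi,\psi\in A$; every exponent congruent to $2\pmod 3$ on the right arises from a product (term of $f''$ congruent to $1$) $\times$ (term of $\varphi$ congruent to $1$), or similarly for $t^b\psi$, and the smallest such product is $\alpha_0 t^{\ell}\cdot t^{\ell}$ when $\alpha_0\ne 0$, but is at least $t^{2\ell+3}$ when $\alpha_0=0$. Your colon--witness route also works, and in fact the bookkeeping you worry about evaporates once you notice the witness explicitly: if $\alpha_0=0$ then
\[
x:=t^{-3}f''=t^{6i-3}+\sum_{j=1}^{i-1}\alpha_j\,t^{\ell+3j-3}
\]
has every exponent in $H$ (the first is $\equiv 0$ and $\ge 3$, the rest are $\equiv 1$ and $\ge\ell$), hence $x\in A$, while $x\xi=t^{-3}f''\cdot g/f''=t^{-3}g=t^{b-3}=t^{\ell+3(i-1)}\in A$. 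Thus $x\in A:_A\xi=I$ with $\rmo(x)=a-3$, a contradiction. The nonvanishing of $\alpha_0$ is exactly what prevents $t^{-3}f''$ from lying in $A$, since it would contribute the forbidden monomial $t^{\ell-3}$. So both approaches succeed; the paper's is shorter, while yours, once the explicit $x$ is written down, gives a cleaner structural explanation of why the leading correction is forced.
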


\begin{proof} 
Let $I \in \calX_A$. Then $\mu_A(I)=2$ and $\fkc \subseteq I$ by Lemma \ref{2.3}. Choosing $f,g \in I$ so that all the conditions stated in Theorem \ref{2.8} are satisfied, we get, similarly as in the proof of Example  \ref{2.9}, the ideal $I$ possesses a minimal system of generators of the specified form. For example, suppose that $\ell = 3n + 1$ where $n \ge 3$ is odd and set $q = \frac{n-1}{2}$. Let us  consider the case where $(a, b) = (\ell, \ell + 3q + 2)$. We firstly write
$$
f = t^{\ell} + \gamma_1t^{\ell+1} +\gamma_2t^{\ell + 2} + \cdots + \gamma_{\ell-3}t^{2\ell -3} + \rho \ \ \text{and}$$
$$g = t^{\ell+3q+ 2} + \delta_1t^{\ell+3q+3} +\delta_2t^{\ell + 3q+4} + \cdots + \delta_{\ell-3q-5}t^{2\ell -3} + \eta,
$$
where 

$\gamma_i \in k$ for all $1 \le i \le \ell-3$ and $\gamma_j = 0$ if $j \equiv 1~\mod~3$,

$\delta_i \in k$ for all $1 \le i \le \ell - 3q - 5$ and $\delta_j = 0$ if $j \equiv 2~\mod~3$, and

$\rho, \eta \in \fkc=t^{2\ell -2}V$. 

\noindent
Then, because $\fkc \subseteq I$, replacing $g$ with $g - \delta_1t^{3q+3}f$, we may assume that $\delta_1=0$, and replacing $g$ with $g-\delta_4t^{3q+6}f$, we may assume that $\delta_1=\delta_4=0$. Repeating this procedure, we may safely assume that $g = t^{\ell+3q+2} + \eta$. As for $f$, replace $f$ with $f - \gamma_3t^3f=(1-\gamma_3t^3)f$, and we may assume that $\gamma_3=0$. Replacing $f$ with $f - \gamma_{6}t^{6}f$, we may also assume that $\gamma_3= \gamma_6=0$. Continuing this procedure, we may now assume that $f$ has the form 
$$
f= t^{\ell} + \alpha_1 t^{\ell + 2} + \alpha_2 t^{\ell + 5} + \cdots + \alpha_{q}t^{\ell + 3q -1} + \rho
$$
with $\alpha_i \in k$ ($1 \le i \le q$). Therefore, since $\fkc=(t^{2\ell-2}, t^{2\ell -1},t^{2\ell}) \subseteq I=(f,g)$, we obtain  $$I = (t^{\ell} + \alpha_1 t^{\ell + 2} + \alpha_2 t^{\ell + 5} + \cdots + \alpha_{q}t^{\ell + 3q -1}, t^{\ell+3q+ 2}, t^{2\ell-2}, t^{2\ell -1},t^{2\ell}),$$
so that  
$$
I = (t^{\ell} + \alpha_1 t^{\ell + 2} + \alpha_2 t^{\ell + 5} + \cdots + \alpha_{q}t^{\ell + 3q -1}, t^{\ell+3q+ 2}),
$$
because $\mu_A(I)=2$ and $t^{\ell + 3q +2} \not\in (t^{\ell} + \alpha_1 t^{\ell + 2} + \alpha_2 t^{\ell + 5} + \cdots + \alpha_{q}t^{\ell + 3q -1}, t^{2\ell-2}, t^{2\ell -1},t^{2\ell})$.

To see the additional condition $\alpha_0 \ne 0$ in $(1)$$(\rm ii)$, $(2)$, $(3)$, and $(4)$$(\rm ii)$, notice that $t^{2\ell} \in I$, and write
$
t^{2\ell} = f \varphi + g \psi
$
with $\varphi, \psi \in A$, and comparing the orders of both sides, we will meet a contradiction, once $\alpha_0=0$. 
\end{proof}

Thanks to Corollary \ref{3.7}, we are now ready to give the main result of this section.

\begin{thm}\label{e=3}
Let $\ell \ge 7$ be an integer such that $\GCD(3, \ell) = 1$ and set $A = k[[t^3, t^{\ell}]]$. Then the following assertions hold true. 
\begin{enumerate}[$(1)$]
\item Suppose that $\ell = 3n + 1$ where $n \ge 3$ is odd. Let $q = \frac{n-1}{2}$. Then 
{\small
\begin{eqnarray*}
\hspace{1.5em}
\calX_A \hspace{-1.3em}
&&=\left\{
(t^{\ell} + \alpha_1 t^{\ell + 2} + \alpha_2 t^{\ell + 5} + \cdots + \alpha_{q}t^{\ell + 3q -1}, t^{\ell + 3q+2}) \mid \alpha_1, \alpha_2, \dots, \alpha_q \in k
\right\} \\ 
&&\bigcup
\left\{(t^{6i}+ \alpha_0 t^{\ell} + \alpha_1 t^{\ell+3} + \cdots + \alpha_{i-1}t^{\ell + 3(i-1)}, t^{\ell + 3i}) \mid 1 \le i \le q, \alpha_0, \ldots, \alpha_{i-1} \in k,  \alpha_0 \ne 0\right\}.
\end{eqnarray*}
}
\item Suppose that $\ell = 3n + 1$ where $n \ge 2$ is even. Let $q = \frac{n}{2}$. Then
{\small
$$
\hspace{1.5em}
\calX_A =\left\{(t^{6i}+ \alpha_0 t^{\ell} + \alpha_1 t^{\ell+3} + \cdots + \alpha_{i-1}t^{\ell + 3(i-1)}, t^{\ell + 3i}) \mid 1 \le i \le q, \alpha_0, \ldots, \alpha_{i-1} \in k,  \alpha_0 \ne 0\right\}.
$$
}
\item Suppose that $\ell = 3n + 2$ where  $n \ge 1$ is odd. Let $q = \frac{n-1}{2}$. Then 
{\small
$$
\hspace{1.5em}
\calX_A =\left\{(t^{6i}+ \alpha_0 t^{\ell} + \alpha_1 t^{\ell+3} + \cdots + \alpha_{i-1}t^{\ell + 3(i-1)}, t^{\ell + 3i}) \mid 1 \le i \le q, \alpha_0, \ldots, \alpha_{i-1} \in k,  \alpha_0 \ne 0\right\}.
$$
}
\item Suppose that $\ell = 3n + 2$ where  $n \ge 2$ is even. Let $q = \frac{n}{2}$. Then
{\small
\begin{eqnarray*}
\hspace{1.5em}
\calX_A \hspace{-1.3em}
&&=\left\{
(t^{\ell} + \alpha_1 t^{\ell + 1} + \alpha_2 t^{\ell + 4} + \cdots + \alpha_{q}t^{\ell + 3q -2}, t^{\ell + 3q+1}) \mid \alpha_1, \alpha_2, \dots, \alpha_q \in k
\right\} \\ 
&&\bigcup
\left\{(t^{6i}+ \alpha_0 t^{\ell} + \alpha_1 t^{\ell+3} + \cdots + \alpha_{i-1}t^{\ell + 3(i-1)}, t^{\ell + 3i}) \mid 1 \le i \le q, \alpha_0, \ldots, \alpha_{i-1} \in k,  \alpha_0 \ne 0\right\}.
\end{eqnarray*}
}
\end{enumerate}
Here, the coefficients $\alpha_i$'s in the given system of generators of each ideal $I \in \calX_A$ are uniquely determined for $I$. 
\end{thm}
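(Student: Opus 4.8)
The plan is to deduce Theorem \ref{e=3} from the material already assembled. Corollary \ref{3.7} establishes that every $I\in\calX_A$ has one of the four displayed shapes, so the inclusion $\calX_A\subseteq\{\text{listed ideals}\}$ is settled; what remains is the reverse inclusion (every listed ideal is genuinely an Ulrich ideal) together with the uniqueness of the coefficients $\alpha_i$. For the reverse inclusion I would use the following criterion, available because $A$ is a one-dimensional Gorenstein local domain: a two-generated ideal $I=(f,g)$ is an Ulrich ideal if and only if $I^2=fI$ and $\ell_A(A/I)=\tfrac12\,\rmo(f)$. Indeed $\ell_A(I/fI)=\rmo(f)$ always holds (as $I$ is a maximal Cohen--Macaulay $A$-module, exactly as in the proof of Lemma \ref{2.4}), and $I\ne(f)$ is automatic since $I$ is not principal; once $I^2=fI$, the natural surjection $(A/I)^{\oplus 2}\twoheadrightarrow I/fI$ is an isomorphism precisely when the two sides have equal length, i.e. when $\rmo(f)=2\,\ell_A(A/I)$, which is Condition $(2)$ of Definition \ref{2.1}.

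For \emph{sufficiency} I would fix one of the listed ideals, write $f$ for the first generator and $g=t^{b}$ for the second, and set $a=\rmo(f)$, $b=\rmo(g)$, $c=\rmc(H)$. In every one of the four cases a direct computation gives $2b-a=2\ell\ge c$, so $g^2/f$ lies in $\fkc=t^cV\subseteq A$; granting $\fkc\subseteq I$, this yields $g^2\in fI$ and hence $I^2=fI$. The first real task is therefore to prove $\fkc=(t^c,t^{c+1},t^{c+2})\subseteq I$: the multiples $t^{3j}g$ already supply all monomials of one residue class mod $3$ beyond $b$, while suitable combinations $t^{3j}f-(\text{lower terms already shown to lie in }I)$ supply the other two residue classes. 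In the families of type $(\mathrm{ii})$ this is exactly where the hypothesis $\alpha_0\ne0$ enters, since it is the coefficient of $t^{\ell}$ in $f$ that lets one solve for $t^{c+2}\in I$ (when $\alpha_0=0$ one checks directly that $\fkc\not\subseteq I$, so $I$ fails to be Ulrich by Lemma \ref{2.3}). Finally I would compute $\ell_A(A/I)$ by listing the finitely many monomials $t^{h}$ with $h\in H$ and $h<c$, and cancelling them using the relations coming from $f$ and $g$ modulo $\fkc$; in each case the count equals $a/2$, and the criterion then gives $I\in\calX_A$. This is wholly parallel to the verifications in Example \ref{ex0} and Example \ref{2.9}.

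For \emph{uniqueness}, suppose two admissible coefficient tuples produce the same ideal $I$. Subtracting the two normal forms of the first generator yields an element $h\in I$ that is a nonzero $k$-linear combination of the correction monomials; all of these lie in a single residue class $r\bmod 3$ (namely $r\equiv\ell$ for type $(\mathrm{ii})$ and $r\equiv\ell-1$ for type $(\mathrm{i})$), and each has order strictly smaller than $b$. I would then analyse the value semigroup $v(I)$: writing an arbitrary element of $I=fA+gA$ as $\lambda f+\mu g$ and comparing $\rmo(\lambda f)\in a+H$ with $\rmo(\mu g)\in b+H$, one sees that any element of $I$ of order $<b$ has order $\not\equiv r\bmod 3$ (if $\rmo(\lambda f)\equiv r$ then $\rmo(\lambda)$ forces $\rmo(\lambda f)\ge a+\ell>b$, while $\rmo(\mu g)\ge b$, and a cancellation between the two could only occur at a common order $\ge b$). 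Hence $h=0$, which forces the two tuples to coincide.

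The conceptual steps above are short, and I expect the \emph{main obstacle} to be the bookkeeping in the sufficiency proof: establishing $\fkc\subseteq I$ and evaluating $\ell_A(A/I)=a/2$ must be carried out separately in the four congruence/parity cases and for the full range $1\le i\le q$, keeping careful track of residues mod $3$ and of the precise place where $\alpha_0\ne0$ is used. This will be the longest and most error-prone portion; by contrast the implication $g^2/f\in\fkc\subseteq I\Rightarrow I^2=fI$ and the uniqueness argument are uniform and become essentially immediate once the value-semigroup description of $I$ is in hand.
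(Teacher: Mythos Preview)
Your proposal is correct and follows essentially the same route as the paper for the sufficiency direction: show $\fkc\subseteq I$ by exhibiting elements of all three residue classes of order $\ge c$ in $v(I)$, use $2b-a=2\ell>c$ to get $g^2\in f\fkc\subseteq fI$ and hence $I^2=fI$, and then compare $\ell_A(A/I)$ with $\tfrac{a}{2}$ via the epimorphism $A/I\twoheadrightarrow I/(f)$. The paper carries this out explicitly only for the family of type (i) in case~(1) and appeals to ``similarly'' for the rest; your remark that in the type~(ii) families it is precisely the cancellation $t^{\ell}f-t^{3i}g=\alpha_0t^{2\ell}+\cdots$ (requiring $\alpha_0\ne0$) that produces the missing residue class and hence $\fkc\subseteq I$ is a useful sharpening that the paper leaves implicit.

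Your uniqueness argument differs from the paper's. The paper writes one normal form as an $A$-linear combination of the other pair of generators, decomposes the multiplier along residue classes, and compares coefficients of individual powers of $t$. You instead subtract the two first generators and argue via $v(I)$ that no nonzero element of $I$ of order $<b$ can have order in the residue class of the correction monomials, because $(a+H)\cap[0,b)$ misses that class (the smallest element of $a+H$ in that class is $a+\ell>b$). Both arguments are valid; yours is shorter and more conceptual, while the paper's is more hands-on but requires no analysis of $v(I)$.
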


\begin{proof}
The proofs of Assertions (1), (2), (3), and (4) are essentially the same. Let us give the proof only for Assertion (1).

It remains to show that the listed ideals are all Ulrich. Firstly, let $f=t^{\ell} + \alpha_1 t^{\ell + 2} + \alpha_2 t^{\ell + 5} + \cdots + \alpha_{q}t^{\ell + 3q -1}$, $g = t^{\ell + 3q+2}$,  and  $I=(f, g)$. We shall show that $I$ is an Ulrich ideal of $A$. We set $a =\ell$, $b =\ell + 3q +2$, and 
$$
v(I) =\{\rmo(h) \mid 0 \ne h \in I\}. 
$$
Since $f,g \in I$, it is standard to check that $c =2a-2, 2a -1, 2a \in v(I)$, 
so that $n \in v(I)$ for all $n \ge c$, whence $\fkc=t^cV \subseteq I$. Therefore, because 
$$
\rmo\left(\frac{g^2}{f}\right) = 2b-a = 2\ell > c,
$$ 
we have $g^2 \in fI$, whence $I^2= fI + (g^2) = fI$. We now consider the exact sequence
$$
0 \to I/(f) \to A/(f) \to A/I \to 0
$$
of $A$-modules, and remember that $\ell_A(A/(f)) = \ell_V(V/fV) = a.$
Let $J = (t^n \mid n \in v(I))$ be the initial ideal of $I$, that is the ideal of $A$ generated by the initial forms of the elements in $I$. We then have $\ell_A(A/J) \le \frac{a}{2}$, counting the number of monomials $t^n \not\in J$. Consequently, since $$\ell_A(A/I) = \ell_A(A/J) \le \frac{a}{2},$$
the epimorphism 
\begin{center}
$\varphi: A/I \to I/(f), \ \ \varphi(1~\mod~I) =g~\mod~(f)$
\end{center} of $A$-modules is an isomorphism. Thus, $I$ is an Ulrich ideal of $A$, which implies the first half of Assertion (1). Similarly, we are able to prove also the second half of Assertion (1).

To see the last claim in Theorem \ref{e=3} for Assertion (1), we must show the following.  
\begin{enumerate}[$(\rm i)$]
\item $(t^{\ell} + \sum_{j=1}^q\alpha_j t^{\ell + 3j-1}, t^{\ell + 3q+2}) = (t^{\ell} + \sum_{j=1}^q\beta_j t^{\ell + 3j-1}, t^{\ell + 3q+2})$, only if $\alpha_j = \beta_j$ for all $1 \le j \le q$.
\item $(t^{6i}+ \sum_{j=0}^{i-1}\alpha_j t^{\ell + 3j}, t^{\ell + 3i}) =(t^{6i}+ \sum_{j=0}^{i-1}\beta_j t^{\ell + 3j}, t^{\ell + 3i})$, only if $\alpha_j = \beta_j$ for all $0 \le j \le i-1$.
\end{enumerate}
Indeed, suppose $(t^{\ell} + \sum_{j=1}^q\alpha_j t^{\ell + 3j-1}, t^{\ell + 3q+2}) = (t^{\ell} + \sum_{j=1}^q\beta_j t^{\ell + 3j-1}, t^{\ell + 3q+2})$ with $\alpha_j, \beta_j \in k$. We write $t^{\ell} + \sum_{j=1}^q\alpha_j t^{\ell + 3j-1} = f\cdot (t^{\ell} + \sum_{j=1}^q\beta_j t^{\ell + 3j-1}) + g \cdot  t^{\ell + 3q+2}$ for some $f, g \in A$. By setting $f = \gamma + f_0 + f_1 + \xi$ where $\gamma \in k$, $f_0 \in \sum_{j=1}^{4q+1}kt^{3j}$, $f_1 \in \sum_{j=0}^{2q}kt^{\ell + 3j}$, and $\xi \in t^{c}V$, we then have the equalities
\begin{eqnarray*}
t^{\ell} + \sum_{j=1}^q\alpha_j t^{\ell + 3j-1} &=& f\cdot \left(t^{\ell} + \sum_{j=1}^q\beta_j t^{\ell + 3j-1}\right) + g \cdot  t^{\ell + 3q+2} \\
&=& (\gamma + f_0)\cdot \left(t^{\ell} + \sum_{j=1}^q\beta_j t^{\ell + 3j-1}\right)\\ 
&{ }& + \text{(terms of degree greater than $\ell + 3q -1$)}.
\end{eqnarray*}
Comparing the coefficients of $t^n$ in both sides, we get $\gamma=1$ and $f_0 = 0$. Hence, Assertion (i) follows, that is $\alpha_j = \beta_j$ for all $1 \le j \le q$. We similarly have, assuming $(t^{6i}+ \sum_{j=0}^{i-1}\alpha_j t^{\ell + 3j}, t^{\ell + 3i}) =(t^{6i}+ \sum_{j=0}^{i-1}\beta_j t^{\ell + 3j}, t^{\ell + 3i})$ with $\alpha_j, \beta_j \in k$, that $\alpha_j = \beta_j$ for all $0 \le j \le i-1$, which completes the proof of Theorem \ref{e=3}.
\end{proof}

Let us note some direct consequences.

\begin{cor}\label{e=3cor}
Let $H = \left<3,\ell\right>$ where $\ell \ge 7$ is an integer such that $\operatorname{GCD}(3,\ell)=1$ and let $A = k[[H]]$ stand for the semigroup ring of $H$ over a field $k$. Then the following assertions hold true.
\begin{enumerate}[{\rm $(1)$}]
\item $\calX_A \ne\emptyset$.
\item $\#(\calX_A)< \infty$ if and only if $\#(k) < \infty$.
\item The ring $A$ contains no Ulrich ideals generated by monomials in $t$ if and only if $\ell = 3n + 1$ or $\ell = 3n + 2$ for some even integer $n \ge 2$. 
\end{enumerate}
\end{cor}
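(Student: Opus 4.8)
The plan is to deduce all three assertions directly from the complete classification of $\calX_A$ in Theorem \ref{e=3}, reading each one off from the explicit normal forms and exploiting the uniqueness of the coefficients $\alpha_i$ asserted at the end of that theorem. For Assertion (1), I would first note that $\ell \ge 7$ forces $q \ge 1$ in every one of the four cases, so the family indexed by $i = 1$ is always present; choosing the nonzero scalar $\alpha_0 = 1$ then exhibits the ideal $(t^6 + t^\ell,\, t^{\ell + 3}) \in \calX_A$, whence $\calX_A \ne \emptyset$.

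For Assertion (2), I would run both implications through this same $i=1$ family. If $\#(k) < \infty$, then each listed family is indexed by finitely many coefficient tuples (each $\alpha_j$ ranging over $k$ or over $k \setminus \{0\}$) and there are only finitely many admissible indices $i$, so $\calX_A$ is a finite union of finite sets and $\#(\calX_A) < \infty$. Conversely, if $\#(k) = \infty$, the assignment $\alpha_0 \mapsto (t^6 + \alpha_0 t^\ell,\, t^{\ell + 3})$ is injective on $k \setminus \{0\}$ by the uniqueness clause of Theorem \ref{e=3}, so $\calX_A$ contains an infinite subfamily and $\#(\calX_A) = \infty$. This yields the stated equivalence.

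For Assertion (3), the key observation is that a listed Ulrich ideal is generated by monomials in $t$ exactly when all the coefficients in its normal form vanish. In the cases of Theorem \ref{e=3}(2) and (3) every listed ideal carries the constraint $\alpha_0 \ne 0$, so its first generator $t^{6i} + \alpha_0 t^\ell + \cdots$ is never a monomial and $A$ admits no monomial Ulrich ideal. In the cases of Theorem \ref{e=3}(1) and (4), setting all $\alpha_j = 0$ in the first family produces the genuine monomial ideal $(t^\ell,\, t^{\ell + 3q + 2})$, respectively $(t^\ell,\, t^{\ell + 3q + 1})$; since $3q + 2 = \ell/2$ in case (1) and $3q + 1 = \ell/2$ in case (4), this is in both cases the single monomial member $(t^\ell,\, t^{3\ell/2})$, which exists if and only if $\ell$ is even. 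Hence $A$ contains a monomial Ulrich ideal precisely when $\ell$ is even, and so $A$ contains none precisely when $\ell$ is odd, i.e. exactly in the situations of Theorem \ref{e=3}(2) and (3); this is the content of Assertion (3).

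The main obstacle, such as it is, lies entirely in the bookkeeping for Assertions (2) and (3) rather than in any new ring-theoretic input: in (2) one must invoke the uniqueness of the normal form to upgrade ``infinitely many parameter values'' to ``infinitely many distinct ideals'', and in (3) one must correctly determine which of the four cases admit the all-zero (monomial) member and then translate the resulting parity of $\ell$ back into the congruence form used in the statement. Once Theorem \ref{e=3} is in hand, nothing further is required.
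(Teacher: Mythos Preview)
Your proof is correct and follows exactly the approach the paper intends: the corollary is stated there without proof as a direct consequence of Theorem \ref{e=3}, and each assertion is indeed read off from the explicit normal forms together with the uniqueness clause, just as you do. The one step worth making slightly more explicit is the claim in (3) that a listed ideal with $\alpha_0 \ne 0$ cannot also be monomial-generated: if $I=(t^p,t^q)$ with $\mu_A(I)=2$ then $p=a=6i$ and $q-p\notin H$, so $f=t^{6i}$, $g=t^q$ already satisfy the conditions of Theorem \ref{2.8}, whence Theorem \ref{3.6} forces $q=\ell+3i$ and the uniqueness clause then gives $\alpha_0=0$, a contradiction.
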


For example, consider the simplest case $A =k[[t^3, t^7]]$. Then $$\calX_A=\{(t^6 + \alpha t^7, t^{10}) \mid 0 \ne \alpha \in k\}.$$ Therefore, $\#(\calX_A)= \#(k)-1$, and $A$ contains no Ulrich ideals generated by monomials in $t$.


\section{The case where $H = \left<4,13\right>$}
Example \ref{2.9} and Theorem \ref{e=3} are  the starting points for our study of $\calX_{k[[H]]}$ where $H$ is a numerical semigroup of small multiplicity. Naturally, the next target should be the case of multiplicity $4$. Nevertheless, contrary to our lighthearted expectations, even for $A = k[[t^4,t^{13}]]$ (which is one of the simplest cases) the task of determining the elements of $\calX_A$ is much more formidable than that of the case of $e=2$ or $3$, as we shall report in this section.

Let $H = \left<4, 13\right>$
$$
 \ytableausetup{mathmode, boxsize=2em}
 \begin{ytableau}
 *(lightgray) 0 & 1 &  2 & 3 \\
 *(lightgray) 4 & 5 & 6 & 7  \\
 *(lightgray) 8   &  9 & 10 & 11\\ 
 *(lightgray) 12   &  *(lightgray) 13 & 14 & 15\\ 
 *(lightgray) 16   &   *(lightgray)17 & 18 & 19\\
 *(lightgray) 20   &   *(lightgray)21 & 22 & 23\\ 
 *(lightgray) 24   &   *(lightgray)25 &  *(lightgray)26 & 27\\ 
 *(lightgray) 28   &  *(lightgray) 29 &  *(lightgray)30 & 31\\
 *(lightgray) 32   &  *(lightgray) 33 &  *(lightgray)34 & 35\\
 *(lightgray) 36   &  *(lightgray) 37 &  *(lightgray)38 & *(lightgray) 39\\
 *(lightgray) \vdots   &  *(lightgray) \vdots &  *(lightgray) \vdots &  *(lightgray) \vdots
 \end{ytableau} 
 $$
and let $A = k[[t^4,t^{13}]]$ denote the semigroup ring of $H$ over a field $k$.

Our goal is the following.

\begin{thm}\label{e=4} We have
\begin{eqnarray*}
\calX_{k[[t^4,t^{13}]]} &=& \{(t^{12} + 2\beta t^{17} + \alpha t^{26}, t^{21} + \beta t^{26}) \mid \alpha, \beta \in k, \ \beta \ne 0\} \\
&\bigcup& \{(t^{16} + 2\beta t^{17} + \alpha_2 t^{21} + \alpha_3t^{26}, t^{25} + \beta t^{26}) \mid \alpha_2, \alpha_3, \beta \in k, \ \beta \ne 0\} \\
&\bigcup & \{(t^{4} + \alpha t^{13}, t^{26}) \mid \alpha \in k \} \\
&\bigcup & \left\{(t^{8} + \alpha_1 t^{13} + \alpha_2t^{17}, t^{26}) \mid \alpha_1, \alpha_2 \in k \right\} \\
&\bigcup & \{(t^{12} + \alpha_1 t^{13} + \alpha_2t^{17} + \alpha_3t^{21}, t^{26}) \mid \alpha_1, \alpha_2, \alpha_3 \in k \} \\
&\bigcup & \{(t^{16} + \alpha_1 t^{17} + \alpha_2t^{21} + \alpha_3t^{25}, t^{26}) \mid \alpha_1, \alpha_2, \alpha_3 \in k \} \\
&\bigcup & \{(t^{20} + \alpha_1 t^{21} + \alpha_2t^{25} + \alpha_3t^{29}, t^{26} + \beta t^{29}) \mid \alpha_1, \alpha_2, \alpha_3, \beta \in k, \ \alpha_1^3 = 2\beta \} \\
&\bigcup & \{(t^{24} + \alpha_1 t^{25} + \alpha_2t^{29} + \alpha_3t^{33}, t^{26} + \beta_1t^{29} + \beta_2 t^{33}) \mid \alpha_1, \alpha_2, \alpha_3, \beta_1, \beta_2 \in k, \\ 
&&\ \ \alpha_1 = 0 \ \text{if} \ \ch k = 2;  \ \alpha_1 = \alpha_2 = \beta_1 = \beta_2 = 0 \ \text{if} \ \ch k \ne  2 \}.
\end{eqnarray*}
For each $I \in \calX_{k[[t^4,t^{13}]]}$, the elements of $k$ which appear in the listed expression are uniquely determined by $I$. 
\end{thm}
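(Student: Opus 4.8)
The plan is to run the machine of Section~3 one multiplicity higher, with $c=\rmc(H)=36$. Since $A=k[[t^4,t^{13}]]\cong k[[x,y]]/(x^{13}-y^4)$ is a Gorenstein (indeed hypersurface) ring, every Ulrich ideal $I$ has $\mu_A(I)=2$, so Theorem~\ref{2.8} applies and I may pick $f,g\in I$ with $I=(f,g)$, $I^2=fI$, $b-a\notin H$, where $a=\rmo(f)$, $b=\rmo(g)$. As $\rme(A)=4\ne 2$, Lemma~\ref{lemma0}(3) forces $a<c=36$. Recording the gaps $\bbZ\setminus H=\{1,2,3,5,6,7,9,10,11,14,15,18,19,22,23,27,31,35\}$, the first step is the purely arithmetic enumeration of the pairs $(a,b)$ satisfying $a$ even, $a,b\in H$, $0<a<b<a+36$, $b-a\notin H$, and $2b-a\in H$; for each even $a\in\{4,8,\dots,34\}$ this already leaves only a handful of $b$.

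These candidates are then pruned by the two surviving requirements of the Ulrich property. Because $t^hf$ and $t^hg$ have leading terms $t^{a+h}$ and $t^{b+h}$, the value semigroup obeys $v(I)\supseteq(a+H)\cup(b+H)$ with no cancellation, and the further elements of $v(I)$ come only from cancellations such as $\rmo(f\,t^{13}-g\,t^{4})>a+13$, which occurs when $b-a=9$ and must be iterated until $v(I)$ stabilises. By Lemma~\ref{2.3} we have $\fkc\subseteq I$, so $[c,\infty)\subseteq v(I)$ and $H\setminus v(I)$ is a finite subset of $[0,c)$; Lemma~\ref{2.4} then imposes $\#\bigl(H\setminus v(I)\bigr)=a/2$, while $I^2=fI$ is equivalent to $g^2/f\in I$ and hence demands $2b-a\in v(I)$. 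I would tabulate $\#\bigl(H\setminus[(a+H)\cup(b+H)]\bigr)$ together with the admissible cancellations for each candidate: a pair is discarded when this count cannot be reduced to exactly $a/2$, when $2b-a$ is a gap of $v(I)$ that can be filled only by over-reducing the count (eliminating e.g.\ $(8,17)$ and $(16,21)$), or when $\fkc\subseteq I$ is impossible, which removes every $a\ge 26$. The outcome should be exactly $(4,26),(8,26),(12,21),(12,26),(16,25),(16,26),(20,26),(24,26)$, the pairs $(12,21)$ and $(16,25)$ surviving only through a cancellation that requires a nonzero coefficient.

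For each surviving pair I bring $(f,g)$ to normal form exactly as in the proof of Corollary~\ref{3.7} and Example~\ref{2.9}: the containment $\fkc\subseteq I$ kills all monomials of order $\ge 36$, and the redundant lower coefficients are cleared by unit multiples $1-\lambda t^{4m}$ of $f$ and by subtracting $A$-multiples of $f$ from $g$, leaving precisely the displayed monomials with finitely many free scalars. The coefficient relations are then forced by integrality of $h:=g^2/f$: since $h\in I\subseteq A$, the coefficient of $t^m$ in the Laurent expansion of $h$ must vanish for every gap $m\in(\bbZ\setminus H)\cap[0,36)$. Vanishing of the $t^{35}$-coefficient yields the linkage ``coefficient of $t^{17}$ in $f$ equals twice the coefficient of $t^{26}$ in $g$'' in the $(12,21)$ and $(16,25)$ families (the $2\beta$ of the statement) and the cubic relation $\alpha_1^3=2\beta$ in the $(20,26)$ family, while the $t^{31}$- and $t^{35}$-coefficients together govern the $(24,26)$ family.

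The \textbf{main obstacle} is this last elimination: the conditions ``$h\in A$, and then $h\in I$'' form a genuinely nonlinear system whose solution set depends on $\ch k$, and extracting the asymmetric description of the $(24,26)$ family --- $\alpha_1=0$ when $\ch k=2$ versus $\alpha_1=\alpha_2=\beta_1=\beta_2=0$ when $\ch k\ne 2$ --- cleanly is the delicate heart of the argument (the iterated value-semigroup bookkeeping of the second step is the secondary source of difficulty). With necessity established, I close as in the proof of Theorem~\ref{e=3}. For sufficiency, each displayed family is checked directly to satisfy $\fkc\subseteq I$, $g^2\in fI$, and $\ell_A(A/I)=a/2$, so that the canonical surjection $A/I\twoheadrightarrow I/(f)$ is an isomorphism and $I$ is an Ulrich ideal. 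For the uniqueness of the scalars $\alpha_i,\beta_j$ attached to a given $I$, I compare the orders of monomials on both sides of a hypothetical equality of two such ideals, exactly as in the final part of the proof of Theorem~\ref{e=3} and in the Remark following Example~\ref{2.9}.
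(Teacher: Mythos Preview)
Your overall plan is sound and you correctly identify the characteristic-dependent $(24,26)$ family as the delicate endpoint, but your route to the list of admissible $(a,b)$ is genuinely different from the paper's, and you are underestimating the step you call ``secondary''.

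The paper does \emph{not} enumerate $(a,b)$ directly from the arithmetic of Theorem~\ref{2.8}.  Instead it passes to the blowup $B=A^I=f^{-1}I$ and classifies its value semigroup $H_1=v(B)$.  The key structural input---absent from your sketch---is that $B$ is a \emph{Gorenstein} local ring, so $H_1$ is a symmetric numerical semigroup containing $H$ with $\mu_A(B)=2$.  From this the paper proves (Propositions~\ref{5.0}--\ref{5.1}, Lemmas~\ref{lemma1}--\ref{lemma2}) that $H_1$ must be one of $\langle 2,13\rangle$, $\langle 4,9\rangle$, $\langle 4,9,14\rangle$, or $\langle 4,2n,13\rangle$ with $n\in\{3,5,7,9,11\}$; then Lemma~\ref{4.8} reads off $b-a$ from $H_1$, and a further proposition pins down $a$ using $v(I)=a+H_1$.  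Only after this does the coefficient analysis (your ``$h=g^2/f\in A$'' step) begin.

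Your alternative---pruning $(a,b)$ by counting $\#(H\setminus v(I))=a/2$ and tracking cancellations---can in principle succeed, since $v(I)=a+H_1$ regardless; but without the symmetry of $H_1$ you have no a~priori bound on which cancellations occur, and must exhibit explicit obstructions case by case.  For instance, the paper separates $(12,21)$ from $(16,25)$ by showing that $a=12$ with $H_1=\langle 4,9,14\rangle$ forces $35\in v(I)\subseteq H$ (impossible), while $a=16$ with $H_1=\langle 4,9\rangle$ is killed by a linear-independence argument in $A/I$; neither of these is the ``nonzero coefficient'' mechanism you describe.  So what you flag as the secondary difficulty is where the paper's $H_1$-machinery does the real work, and if you proceed without it you should expect that bookkeeping, not the final nonlinear system, to dominate the argument.
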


\noindent
This result shows that if $k$ is a finite field, then $k[[t^4,t^{13}]]$ contains only finitely many Ulrich ideals, but if $k$ is infinite, then it contains numerous Ulrich ideals which are not generated by monomials in $t$.

The proof of Theorem \ref{e=4} is divided into several steps. First of all, let us fix the following.

\begin{setting}\label{setting2}
Let $I$ be an Ulrich ideal of $A$ and choose $f, g \in I$ so that all the conditions stated in Theorem \ref{2.8} are satisfied. Namely 
\begin{itemize}
\item $a, b \in H$, $0 < a < c$, $0 < a < b < a + c$,
\item $b-a \not\in H$, $2b-a \in H$, $a = 2\cdot \ell_A(A/I)$, 
\end{itemize}
where $c = \rmc(H)$, $a = \rmo(f)$, and $b = \rmo(g)$.
We set  
$
B=\bigcup_{n\geq 0}\left[I^n: I^n\right]
$
and let $\m_B$ be the maximal ideal of $B$. Let $\xi = \frac{g}{f} \in B$ and $H_1= \{ \rmo(x) \mid 0 \ne x \in B \}$, the value semigroup of $B$. Hence, $B = k[[t^4, t^{13},\xi]]$. Notice that $B=f^{-1}I$ is a Gorenstein ring, $\mu_A(B) = 2$, and $b-a \in H_1$. In particular, $H_1$ is a symmetric numerical semigroup. 
\end{setting}

Our strategy is the following. Similarly as in Section 3, first of all, we enumerate all the possible semigroups $H_1$. Secondly, we determine the possible pairs $(a,b)$ according to the list of possible $H_1$. Lastly, we will show that the pairs $(a,b)$ actually appear to be the data for some $I \in \calX_A$, pinpointing the elements of $\calX_A$.

We denote by $\mu(H_1)$ the number of a minimal system of generators of $H_1$. Let us begin with the following.

\begin{prop}\label{5.0}
$\mu(H_1)=2$ or $3$.
\end{prop}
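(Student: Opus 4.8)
The plan is to confine $\mu(H_1)$ to the interval $\{2,3,4\}$ using only the inclusions $H\subseteq H_1\subsetneq V$, and then to eliminate the extreme value $\mu(H_1)=4$ by exploiting the symmetry of $H_1$.

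First I would record the bounds on the multiplicity $\rme(H_1)=\min(H_1\setminus\{0\})$. Since $H=\left<4,13\right>\subseteq H_1$ we have $4\in H_1$, so $\rme(H_1)\le 4$. For the lower bound, observe that $B\ne V$: indeed $\mu_A(B)=2$ whereas $\mu_A(V)=\rme(A)=4$, so $B$ is a proper subring of $V$ and hence $1\notin H_1$. As the only numerical semigroup containing $1$ is $\mathbb{Z}_{\ge 0}$ (which is generated by a single element), this gives $\rme(H_1)\ge 2$ and $\mu(H_1)\ge 2$.

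Next I would invoke the standard inequality $\mu(S)\le \rme(S)$, valid for every numerical semigroup $S$. Writing $m=\rme(S)$, every minimal generator of $S$ other than $m$ lies in $\mathrm{Ap}(S,m)\setminus\{0\}$ — if a generator $x\ne m$ satisfied $x-m\in S$ it would fail to be minimal — and $\#\bigl(\mathrm{Ap}(S,m)\setminus\{0\}\bigr)=m-1$, so $\mu(S)\le m$. Applied to $H_1$ this yields $2\le \mu(H_1)\le \rme(H_1)\le 4$.

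It then remains to exclude $\mu(H_1)=4$, which is the only substantive step. If $\mu(H_1)=4$, then $4=\mu(H_1)\le\rme(H_1)\le 4$ forces $\rme(H_1)=\mu(H_1)$, i.e. $H_1$ has maximal embedding dimension. But $H_1$ is symmetric by Setting \ref{setting2} (via \cite{Kunz}). A maximal embedding dimension numerical semigroup of multiplicity $m$ has exactly the $m-1$ pseudo-Frobenius numbers $w_1-m,\dots,w_{m-1}-m$, where $\mathrm{Ap}(H_1,m)=\{0,w_1,\dots,w_{m-1}\}$ (no $w_i$ can dominate another $w_j$ under $\le_{H_1}$ without destroying minimality), so its Cohen--Macaulay type equals $m-1$; symmetry forces this type to be $1$, whence $m=2$, contradicting $\rme(H_1)=4$. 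Therefore $\mu(H_1)\in\{2,3\}$. The main obstacle is precisely this last step — the identification of the type of a maximal embedding dimension semigroup and its confrontation with symmetry — while the surrounding bounds are routine.
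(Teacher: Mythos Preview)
Your proof is correct and follows essentially the same line as the paper's: both bound $\mu(H_1)$ between $2$ and $4$ via $B\ne V$ and $\rme(H_1)\le 4$, and both eliminate $\mu(H_1)=4$ by observing that a symmetric numerical semigroup of maximal embedding dimension must have multiplicity~$2$. The only difference is packaging: the paper phrases the last step ring-theoretically (passing to $C=k[[H_1]]$, a Gorenstein local ring of minimal multiplicity, hence a hypersurface of multiplicity~$2$), whereas you argue combinatorially via the Ap\'ery set that the type is $m-1$, forcing $m=2$ by symmetry---the two statements are equivalent under Kunz's dictionary.
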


\begin{proof}
Notice that $\mu(H_1) \le 4$, since $\rme(H_1) \le 4$. Because $\mu_A(V) = 4$ and $\mu_A(B)=2$, we get $B \ne V$, whence  $1 \not\in H_1$ and $\mu(H_1) \ge 2$. Suppose $\mu(H_1)=4$ and let $C = k[[H_1]]$. Then $C$ is a Gorenstein ring and $\mu_C(\m_C)=4$ (here $\m_C$ denotes the maximal ideal of $C$). Therefore, because $\mu_C(\m_C) \le \rme(C)= \rme(H_1) \le 4$, $C$ has minimal multiplicity, whence $C$ must be a hypersurface of multiplicity $2$, which is impossible.
\end{proof}

\begin{prop}\label{prop0}
If $2 \in H_1$, then $H_1=\left<2, 13\right>$.
\end{prop}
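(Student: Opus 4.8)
The plan is to read off everything from the $A$-module decomposition $B = A + A\xi$ recorded in Setting \ref{setting2}, together with the fact that $\rmo$ is a valuation on the domain $V$ (so $\rmo(s\xi)=\rmo(s)+\rmo(\xi)=\rmo(s)+(b-a)$ for $0\ne s\in A$, and $\rmo(r)\in H$ for $0\ne r\in A$). The proof splits into two parts: first pin down $b-a=2$, then compute $v(B)=H_1$ outright and check it has no odd element below $13$.

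First I would show that the hypothesis $2\in H_1$ forces $b-a=2$. Choose $\eta\in B$ with $\rmo(\eta)=2$ and write $\eta=r+s\xi$ with $r,s\in A$. The only element of $H$ that is smaller than $4$ is $0$; hence if $r$ were a unit then $\eta$ would have a nonzero constant term and $\rmo(\eta)=0$, while if $r\ne 0$ then $\rmo(r)\ge 4>2$. In either case the order-$2$ part of $\eta$ cannot come from $r$, so it must come from $s\xi$; thus $s\ne 0$ and $\rmo(s)+(b-a)=2$, which (as $\rmo(s)\in H$, so $\rmo(s)\in\{0\}\cup\{n\ge 4\}$) forces $\rmo(s)=0$ and $b-a=2$. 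Note this is consistent with Theorem \ref{2.8}, since $2=b-a\notin H$.

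Then I would determine $v(B)$ directly. With $b-a=2$ every nonzero element of $B=A+A\xi$ has the form $r+s\xi$ with $\rmo(r)\in H$ and $\rmo(s\xi)\in 2+H$, so its order is the smaller of these two whenever they differ, and otherwise exceeds their common value. The elementary numerical input is twofold: on the one hand $(H\cup(2+H))\cap[0,12]=\{0,2,4,6,8,10,12\}$ consists only of even numbers; on the other hand a cancellation of leading terms can occur only when $\rmo(r)=\rmo(s\xi)=k$ with $k\in H\cap(2+H)$, i.e. with both $k$ and $k-2$ in $H$, and the smallest such $k$ is $26$ (the first two elements of $H$ differing by $2$ are $24$ and $26$). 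Consequently any order arising from cancellation is $>26$, while any order arising without cancellation and lying below $13$ is even. I would also record that $1\notin H_1$, because $B\ne V$ (here $\mu_A(B)=2<4=\mu_A(V)$).

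Putting these together: no element of $B$ has odd order strictly below $13$, so $H_1$ contains none of $1,3,5,7,9,11$. Since $2=\rmo(\xi)\in H_1$ and $13\in H\subseteq H_1$, we get $H_1\supseteq\left<2,13\right>$; and as $\left<2,13\right>$ is exactly the set of all even integers together with all odd integers $\ge 13$, the exclusion of the small odd integers upgrades this to $H_1=\left<2,13\right>$. The only delicate point — and the place where one must be careful rather than hand-wave — is the cancellation case, which is precisely why the computation $\min\{k\in H : k-2\in H\}=26$ is the crux: it guarantees that coincidences of leading terms can never manufacture a spurious small odd value in $v(B)$.
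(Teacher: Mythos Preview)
Your proof is correct and takes a genuinely different route from the paper's. The paper does not begin by pinning down $b-a$; instead it writes $H_1=\langle 2,\alpha\rangle$ for some odd $\alpha$ with $3\le\alpha\le 13$, picks $\eta,\rho\in B$ with $\rmo(\eta)=2$ and $\rmo(\rho)=\alpha$, and uses $\ell_A(B/\m B)=2$ to force a linear dependence among $1,\eta,\rho$ modulo $\m B$, concluding that $\rho\in\m B=(t^4,t^{13})B$. Writing $\rho=t^4\varphi_1+t^{13}\psi_1$ then yields $\alpha-4=\rmo(\varphi_1)\in H_1$ whenever $\alpha<13$, which is impossible because $\alpha-4$ is odd and smaller than $\alpha$. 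Your argument, by contrast, stays entirely at the level of the $A$-module decomposition $B=A+A\xi$ and pure valuation arithmetic: you never invoke $B/\m B$, and the crux is the purely numerical observation that $\min\{k\in H:k-2\in H\}=26$, which blocks cancellations from producing small odd orders. The paper's method is more in keeping with the devices used repeatedly in Section~4 (the same ``$\rho\in\m B$'' trick reappears in Lemma~\ref{lemma1} and elsewhere), while your approach is more elementary and self-contained and has the side benefit of establishing $b-a=2$ up front --- a fact the paper only proves later, as the case $n=1$ of Lemma~\ref{4.8}(2).
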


\begin{proof}
Suppose that $2 \in H_1$. Then $H_1 =\left<2, \alpha \right>$ for some odd integer $3 \le \alpha \le 13$. Take $\eta, \rho \in B$ so that $\rmo(\eta) = 2$ and $\rmo(\rho) = \alpha$. Then, $B= k[[\eta, \rho]]$, and therefore, because $\ell_k(B/\m B)=\ell_A(B/\m B) =2$, the elements $1, \eta, \rho$ $\mod~\m B$ are linearly dependent over $k$ inside $B/\m B$. Therefore, $\rho \in \m B$. In fact, choose $\mathbf{0} \ne \left(\begin{smallmatrix}
\alpha\\
\beta\\
\gamma
\end{smallmatrix}
\right)
 \in k^3$ so that $$\alpha + \beta \eta + \gamma \rho \in \m B.$$ We then have $\alpha = 0$. If $\beta \ne 0$, then $\eta + \gamma \rho \in \m B$ for some $\gamma \in k$, whence
$$\eta + \gamma \rho =t^4 \varphi + t^{13} \psi
$$
with $\varphi, \psi \in B$. This is impossible, since $\rmo(\eta + \gamma \rho)=2$. Thus, $\beta = 0$, and $\rho \in \m B$. Let us write $$\rho = t^4 \varphi_1 + t^{13} \psi_1$$ with $\varphi_1, \psi_1 \in B$. If $\alpha < 13$, then $\rmo(\varphi_1) = \alpha-4 \in H_1 =\left<2, \alpha \right>$, which is impossible. Hence, $\alpha = 13$ and $H_1 = \left<2, 13\right>$. 
\end{proof}

\begin{lem}\label{lemma1}
$H_1 \ne \left<3,4\right>, \left<4, 5\right>, \left<4,5,6\right>, \left<4,6,7 \right>, \left<4, 6, 9 \right>, \left<4,9,10\right>$
\end{lem}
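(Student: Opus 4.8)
The plan is to pin down $s := b-a = \rmo(\xi)$ inside $H_1$ and then to show that each listed semigroup would force a forbidden ``extra'' element of $H_1$ just above $s$. Throughout I use that $B = A + A\xi$ (see Setting~\ref{setting2}), so every element of $B$ has the form $p + q\xi$ with $p, q \in A$, and that $s \notin H$, $\rme(H)=4$. First I would record that $\rmo(z)<s$ implies $\rmo(z)\in H$: since $\rmo(q\xi)=\rmo(q)+s\ge s$, the order of $p+q\xi$ below $s$ can only come from $p\in A$. Hence $H_1\cap[0,s)=H\cap[0,s)$, and as $s=\rmo(\xi)\in H_1\setminus H$ this yields
$$ s=\min(H_1\setminus H), $$
which fixes the numerical value of $s$ once $H_1$ is specified.

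The decisive step is the claim that $H_1$ has no element of $H_1\setminus H$ in the open interval $(s,s+4)$; that is, $s<n<s+4$ and $n\in H_1$ force $n\in H$. To prove it I would take $z=p+q\xi\in B$ with $s<\rmo(z)<s+4$ and split on $q$. If $q$ is a unit, the coefficient of $t^s$ in $q\xi$ is a nonzero scalar multiple of the leading coefficient of $\xi$, whereas $p$ carries no $t^s$-term because $s\notin H$; thus $\rmo(z)=s$, contradicting $\rmo(z)>s$. Hence $q\in\m$, so $\rmo(q)\ge\rme(H)=4$ and $\rmo(q\xi)\ge s+4>\rmo(z)$, forcing $\rmo(z)=\rmo(p)\in H$. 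The main obstacle lurks exactly here: $\xi$ is a general power series, not supported on $s+H$, so cancellation in $p+q\xi$ must be controlled; the argument is designed so that over the short range $(s,s+4)$ no cancellation can raise the order to a new value.

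Granting this, five of the six semigroups fall immediately. For
$$H_1=\left<3,4\right>,\ \left<4,5,6\right>,\ \left<4,6,7\right>,\ \left<4,6,9\right>,\ \left<4,9,10\right>$$
one computes $s=\min(H_1\setminus H)=3,5,6,6,9$ respectively, and in each case a further element of $H_1\setminus H$ lies in $(s,s+4)$, namely $6,6,7,9,10$, contradicting the interval claim. (For instance, for $\left<4,6,7\right>$ we have $s=6$ and $7\in H_1\setminus H$ with $6<7<10$.)

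The remaining case $H_1=\left<4,5\right>$ is genuinely exceptional: here $s=5$ and $H_1$ agrees with $H$ throughout $(5,9)$, so the interval argument gives nothing. Instead I would invoke the Gorenstein input $\xi^2\in\m B=t^4B+t^{13}B$ (as in Lemma~\ref{2.1}), writing $\xi^2=t^4u+t^{13}v$ with $u,v\in B$. Since $\rmo(\xi^2)=2s=10<13\le\rmo(t^{13}v)$, the term $t^{13}v$ does not affect the order, so $\rmo(t^4u)=10$, i.e. $\rmo(u)=6$. But $\rmo(u)\in H_1=\left<4,5\right>$ while $6\notin\left<4,5\right>$, a contradiction. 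This disposes of $\left<4,5\right>$ and finishes the proof; note that $\left<4,5\right>$, the unique candidate admitting no new element just above $s$, is the only place where Lemma~\ref{2.1} is required.
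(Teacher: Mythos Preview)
Your proof is correct and takes a genuinely different route from the paper's. The paper argues case by case: for each candidate $H_1$ it picks generators $\eta$ (and $\rho$ when $\mu(H_1)=3$) of $B$ with the prescribed orders, then either uses $\eta^2\in\m B$ (for the two-generated candidates $\langle 3,4\rangle$ and $\langle 4,5\rangle$) or a linear-dependence argument forcing $\rho\in\m B$ (for the three-generated candidates), and in each instance reads off an order $\rmo(\varphi)\in H_1$ that visibly fails to lie in $H_1$. Your argument is more uniform: from $B=A+A\xi$ you extract the single combinatorial statement that $(s,s+4)\cap(H_1\setminus H)=\emptyset$, which immediately kills five of the six candidates without ever naming generators of $B$; only $\langle 4,5\rangle$ survives this filter, and there your use of $\xi^2\in\m B$ coincides with the paper's treatment (with $\xi$ itself playing the role of the paper's $\eta$). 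What your approach buys is economy and a reusable lemma about the shape of $v(B)$ near $s$; what the paper's approach buys is that it never needs the fact that elements of $A$ are supported on $H$, working instead entirely with orders and the containment $\m B=(t^4,t^{13})B$.
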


\begin{proof}
Assume that $H_1 = \left<3,4\right>$ and write $B=k[[\eta, t^4]]$ with $\eta \in B$ such that $\rmo(\eta)= 3$. Then, since $\eta^2 \in \m B$ by the proof of Lemma \ref{2.1}, we have $\eta^2 = t^4\varphi + t^{13}\psi$ for some $\varphi, \psi \in B$, which shows $2 = \rmo(\varphi) \in H_1=\left<3,4 \right>$. This is impossible.

Assume that $H_1=\left<4, 5\right>$ and write $B= k[[t^4, \eta]]$ with $\eta \in B$ such that $\rmo(\eta)=5$. Then, since $\eta^2 \in \m B$ for the same reason as above, we get $$\eta^2= t^4 \varphi + t^{13}\psi$$ with $\varphi, \psi \in B$, which forces $6=\rmo(\varphi) \in H_1=\left<4, 5\right>$. This is absurd.

Assume that $H_1 =\left<4, 5, 6 \right>$ and write $B= k[[t^4, \eta, \rho]]$ with $\eta, \rho \in B$ such that $\rmo(\eta)=5$ and $\rmo(\rho)=6$. Then, the elements $1, \eta, \rho$ $\mod~\m B$ are linearly dependent over $k$ inside $B/\m B$, and we have $\rho \in \m B$ for the same reason as in the proof of Proposition \ref{prop0}. Writing $$\rho = t^4 \varphi_1 + t^{13} \psi_1$$ with $\varphi_1, \psi_1 \in B$, we see $2=\rmo(\varphi_1) \in H_1=\left<4,5,6\right>$, which is impossible. Hence, $H_1 \ne \left<4, 5, 6 \right>$.

The assertion that $H_1 \ne \left<4,6,7 \right>, \left<4,6,9 \right>, \left<4,9,10\right>$ is similarly proved.
\end{proof}

\begin{lem}\label{lemma2}
If $2 \not\in H_1$, then $3,5 \not\in H_1$.
\end{lem}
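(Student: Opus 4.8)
The plan is to reuse the valuation-theoretic mechanism already exploited in Lemma \ref{lemma1}, which rests on the single structural fact that $\m_B^2 \subseteq \m B$. Indeed, $\mu_A(B)=2$ gives $\ell_A(B/\m B)=2$, so $B/\m B$ is an Artinian local ring of length two with residue field $k$; hence its maximal ideal satisfies $(\m_B/\m B)^2=(0)$, that is $\m_B^2 \subseteq \m B=(t^4,t^{13})B$ (this is exactly the argument in the proof of Lemma \ref{2.1}). Consequently, for any $\eta\in\m_B$ one may write $\eta^2 = t^4\varphi + t^{13}\psi$ with $\varphi,\psi\in B$, and I would extract contradictions by comparing $\rmo(\eta^2)$ against $\rmo(t^4\varphi)=4+\rmo(\varphi)$ and $\rmo(t^{13}\psi)\ge 13$. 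The point to keep airtight throughout is that $\varphi\ne 0$ (otherwise $\eta^2=t^{13}\psi$ would have order $\ge 13$) and that the two summands cannot cancel to lower the order, since $\rmo(t^4\varphi)$ and $\rmo(t^{13}\psi)$ coincide only when both are $\ge 13$; granting this, the leading term of $\eta^2$ of order $<13$ must come from $t^4\varphi$, so that $\rmo(\varphi)=\rmo(\eta^2)-4\in H_1$.

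For $3\in H_1$, I would choose $\eta\in B$ with $\rmo(\eta)=3$; then $\rmo(\eta^2)=6<13$ forces $\rmo(\varphi)=2$, whence $2\in H_1$, against the hypothesis $2\notin H_1$. Thus $3\notin H_1$.

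The case $5\in H_1$ is where the real obstacle lies, because here the same computation yields only $\rmo(\eta^2)=10$, hence $\rmo(\varphi)=6$, i.e. $6\in H_1$, and this does not by itself contradict $2\notin H_1$. Here I would feed the new element $6$ back into the global classification. Together with $4,5\in H_1$ we obtain $\left<4,5,6\right>\subseteq H_1$; since $1,2,3\notin H_1$ the multiplicity is $\rme(H_1)=4$ and $4,5,6$ are minimal generators. I would then split on whether $7\in H_1$: if $7\in H_1$, then $7$ is a fourth minimal generator (as $7<4+4$), so $\mu(H_1)\ge 4$, contradicting Proposition \ref{5.0}; if $7\notin H_1$, then, since $\left<4,5,6\right>$ already contains every integer $\ge 8$, we get $H_1=\left<4,5,6\right>$, contradicting Lemma \ref{lemma1}. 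Either alternative is impossible, so $5\notin H_1$. Thus the genuine difficulty is not the elementary order bookkeeping but the extraction of the contradiction in the $5\in H_1$ case, which comes not from the hypothesis alone but from combining the newly produced relation $6\in H_1$ with the earlier structural restrictions of Proposition \ref{5.0} and Lemma \ref{lemma1}.
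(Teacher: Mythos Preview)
Your argument is correct, and it is genuinely different from the paper's. The paper never invokes the relation $\m_B^2\subseteq\m B$ here; instead it argues purely at the level of numerical semigroups, using that $H_1$ is symmetric together with repeated socle computations. For $3\in H_1$ the paper notes $H_1\supsetneq\langle 3,4\rangle$ (Lemma~\ref{lemma1}), uses the socle of $V/k[[t^3,t^4]]$ to force $5\in H_1$, and concludes $H_1=\langle 3,4,5\rangle$, which is not symmetric. For $5\in H_1$ it runs a longer chain: $H_1\supsetneq\langle 4,5\rangle$, then $H_1\supsetneq\langle 4,5,11\rangle$ by symmetry, then socle considerations yield $6\in H_1$ or $7\in H_1$; a further socle step in the $7$-case recovers $6\in H_1$ anyway, and one lands at $H_1=\langle 4,5,6,7\rangle$, again not symmetric. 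Your route is shorter: the valuation trick gives $2\in H_1$ (resp.\ $6\in H_1$) in a single stroke, and the endgame in the $5$-case is a clean dichotomy on $7$ using Proposition~\ref{5.0} and Lemma~\ref{lemma1} rather than iterated socle arguments. What the paper's approach buys is self-containment at the semigroup level (it uses only symmetry, which is part of the setting), whereas you trade the socle machinery for an appeal to Proposition~\ref{5.0}; since Proposition~\ref{5.0} precedes this lemma, that trade is perfectly legitimate and yields a more economical proof.
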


\begin{proof}
Assume that $3 \in H_1$. Then, since $H_1 \supsetneq \left<3,4\right>$ by Lemma \ref{lemma1}, we get $H_1 \supseteq \left<3,4,5\right>$ (remember that the $k[[t^3,t^4]]$-submodule $k[[H_1]]/k[[t^3,t^4]]$ of $V/k[[t^3,t^4]]$ contains a unique socle $t^5~\mod~k[[t^3,t^4]]$, since $k[[t^3,t^4]]$ is a Gorenstein ring). Consequently, $H_1 = \left<3,4,5 \right>$, since $\left<3,4,5\right> \supseteq H_1$ (because $1, 2 \not\in H_1$). This is, however, impossible,  since $\left<3,4,5\right> $ is not symmetric.

Assume that $5 \in H_1$. Then, $H_1 \supsetneq \left<4,5\right>$ by Lemma \ref{lemma1}. Hence, for the same reason as above $H_1 \supsetneq  \left<4,5,11\right>$ (notice that $\left<4,5,11\right>$ is not symmetric). Consequently, considering the socle of the $k[[t^4,t^5, t^{11}]]$-module $V/k[[t^4,t^5,t^{11}]]$ which is spanned by the images of $t^6$ and $t^7$, we get $6 \in H_1$ or $7 \in H_1$. Therefore, $H_1 \supsetneq \left<4,5, 6\right>$ by Lemma \ref{lemma1} or $H_1 \supsetneq \left<4,5,7\right>$ (since $\left<4,5,7\right>$ is not symmetric). Suppose now that $7 \in H_1$. Then,  considering the socle of the $k[[t^4,t^5, t^{7}]]$-module $V/k[[t^4,t^5,t^7]]$, we get $3 \in H_1$ or $6 \in H_1$. Hence, $6 \in H_1$ even in the case where $7 \in H_1$, because $3 \not\in H_1$ as is shown above. Therefore, $H_1 \supsetneq \left<4,5,6\right>$, whence $H_1 \supseteq \left<4,5,6,7\right>$, because $\left<4,5,6\right>$ is symmetric and the socle of $k[[t^4,t^5,t^6]]$-module $V/k[[t^4,t^5,t^6]]$ is spanned by the image of $t^7$. Thus, $H_1 = \left<4,5,6,7\right>$ since $\left<4,5,6,7\right> \supseteq H_1$, which is impossible because $\left<4,5,6,7\right>$ is not symmetric. Hence, $5 \not\in H_1$, as is claimed.
\end{proof}

We now give an account of the possible semigroups $H_1$ in the following way. We will later show that all of the listed semigroups appear as the value semigroups $v(A^I)$ of $A^I$ for some $I \in \calX_A$.

\begin{prop}\label{5.1}
\begin{enumerate}[$(1)$]
\item If $\mu(H_1) = 2$, then $H_1 = \left<2, 13\right>$ or $H_1 =\left<4, 9\right>$.
\item If $\mu(H_1) = 3$, then $H_1 = \left<4, 9, 14\right>$ or $H_1 =\left<4, 2n, 13\right>$ for some  $n \in \{3, 5, 7, 9, 11\}$.
\end{enumerate}
\end{prop}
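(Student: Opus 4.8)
The plan is to feed the structural constraints already in hand---$\mu(H_1)\in\{2,3\}$ from Proposition \ref{5.0}, the dichotomy of Proposition \ref{prop0}, the exclusions of Lemmas \ref{lemma1} and \ref{lemma2}, and the symmetry of $H_1$ recorded in Setting \ref{setting2}---into an exhaustive search organized by the least minimal generator of $H_1$ exceeding $4$. First I would split on whether $2\in H_1$. If $2\in H_1$, Proposition \ref{prop0} forces $H_1=\langle 2,13\rangle$, which is two-generated and is one of the two semigroups in $(1)$. So assume $2\notin H_1$; then Lemma \ref{lemma2} gives $3,5\notin H_1$, and since $4\in H\subseteq H_1$ we obtain $\rme(H_1)=4$.

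When $\mu(H_1)=2$ we have $H_1=\langle 4,\alpha\rangle$ with $\alpha$ odd and $\alpha>4$; since $5\notin H_1$ we get $\alpha\ge 7$, and since $13\in H_1$ we need $\alpha\le 13$ with $13\in\langle 4,\alpha\rangle$. Checking $\alpha\in\{7,9,11,13\}$ leaves only $\alpha=9$ (the value $\alpha=13$ being excluded as $H_1\supsetneq H$), which completes $(1)$. When $\mu(H_1)=3$, write $H_1=\langle 4,u,v\rangle$ with $u$ the smallest minimal generator greater than $4$. As $13\in H_1$ while $2,3,5\notin H_1$, the integer $u$ is a non-multiple of $4$ with $u\le 13$, so $u\in\{6,7,9,10,11,13\}$; in each case $u$ is automatically a minimal generator, the only smaller nonzero element being $4$ and $u-4\notin H_1$.

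It then remains, for each admissible $u$, to pin down the second generator $v$ from three demands: that $13\in\langle 4,u,v\rangle$, that $v$ be a genuine minimal generator, and that $H_1$ be symmetric. For the last I would invoke the Ap\'ery criterion: writing $\mathrm{Ap}(H_1,4)=\{0,w_1,w_2,w_3\}$ with $w_i$ the least element of $H_1$ congruent to $i$ modulo $4$, symmetry holds exactly when the two smaller nonzero $w_i$ sum to the largest, and a residue count shows the largest must be $w_1$ or $w_3$, never $w_2$ (equivalently one may just compute the gaps and the Frobenius number). Running the cases: $u=6$ gives $v\in\{7,9,13\}$, and Lemma \ref{lemma1} discards $\langle 4,6,7\rangle$ and $\langle 4,6,9\rangle$, leaving $\langle 4,6,13\rangle$; $u=9$ gives $v\in\{10,11,14,15,19,23\}$, where Lemma \ref{lemma1} discards $\langle 4,9,10\rangle$ and the symmetry test eliminates all but $\langle 4,9,14\rangle$; $u=10$ forces $v=13$, yielding the symmetric $\langle 4,10,13\rangle$; $u=13$ forces $v\in\{14,18,22\}$, since each residue-$3$ choice of $v$ fails symmetry as $26=2\cdot 13$ already realizes residue $2$, yielding $\langle 4,13,14\rangle,\langle 4,13,18\rangle,\langle 4,13,22\rangle$; and $u=7,11$ produce only $\langle 4,7,9\rangle,\langle 4,7,13\rangle,\langle 4,11,13\rangle$, all non-symmetric. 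Collecting the survivors gives $\langle 4,9,14\rangle$ together with $\langle 4,2n,13\rangle$ for $n\in\{3,5,7,9,11\}$, which is exactly $(2)$.

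The main obstacle is bookkeeping: arranging the $\mu(H_1)=3$ analysis so that the list of admissible $u$ is provably exhaustive (this rests on $13\in H_1$ forcing $u\le 13$) and so that every discarded candidate is killed by a definite mechanism. The genuinely non-formal inputs are the symmetry test---needed to remove three-generated candidates such as $\langle 4,7,13\rangle$ and $\langle 4,11,13\rangle$ that otherwise satisfy all of $\mu(H_1)=3$, $\rme(H_1)=4$, and $3,5\notin H_1$---and the explicit forbidden list of Lemma \ref{lemma1}, which is precisely what is required to eliminate the symmetric three-generated semigroups $\langle 4,6,7\rangle$, $\langle 4,6,9\rangle$, and $\langle 4,9,10\rangle$ that would otherwise slip through.
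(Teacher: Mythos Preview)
Your argument is correct. For part (1) it matches the paper essentially verbatim. For part (2) the two proofs diverge: the paper invokes the Herzog--Watanabe structure theorem, which says that every symmetric three-generated numerical semigroup is a complete intersection obtained by gluing, and then parametrizes $H_1$ accordingly (either $\langle 4,a_2,2n\rangle$ with $a_2$ odd, $a_2\in\langle 2,n\rangle$, or $\langle 4,da_2',da_3'\rangle$ with $d$ odd), checking which parameters are compatible with $13\in H_1$, $5\notin H_1$, and Lemma~\ref{lemma1}. You instead run a bare-hands finite search: bound the smallest minimal generator $u>4$ by $13$, enumerate $u\in\{6,7,9,10,11,13\}$, and for each $u$ list the admissible third generators and test symmetry via the Ap\'ery set criterion $w_{(1)}+w_{(2)}=w_{(3)}$.

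Your route is more elementary and entirely self-contained---it avoids citing the gluing structure theorem at the cost of a slightly longer case analysis. The paper's route is shorter once the structure theorem is granted, and it explains \emph{why} the surviving semigroups look the way they do (they are the gluings of $\langle 2,n\rangle$ with $\mathbb{N}$ along $13$, plus the exceptional $\langle 4,9,14\rangle$). Both land on the same list; your residue observation that the maximal Ap\'ery element can never be $w_2$ is a neat shortcut the paper does not isolate.
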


\begin{proof}
Thanks to Proposition \ref{prop0}, we may assume that $2 \not\in H_1$. Then, Lemma \ref{lemma2} shows $\min [H_1 \setminus \{ 0 \}] = 4$. Therefore, if $\mu(H_1)=2$, then $H_1 = \left<4, \alpha \right>$ for some odd integer $\alpha \ge 7$ (see Lemma \ref{lemma1}). Because $13 \in H_1$, we readily get $\alpha = 9$. This proves Assertion (1).

Suppose that $\mu(H_1)=3$. Symmetric numerical semigroups which are minimally generated by three elements are complete intersections (\cite[Theorem 3.10]{Herzog}, \cite[Corollary 10.5]{RG}) and they are obtained by gluing (\cite[Section 3]{Herzog}, \cite[Proposition 3]{W}). According to the structure theorem, our semigroup $H_1$ must have one of the following forms.
\begin{enumerate}
\item[${\rm (i)}$] $H_1 = \left<4, a_2, 2n\right>$, where both $a_2$ and $n$ are odd such that $a_2 \ge 5$ and $n \ge 3$, $a_2 \ne n$, and $a_2 \in \left<2, n \right>$. Hence $n < a_2$.
\item[${\rm (ii)}$] $H_1 = \left<4, da_2', da_3'\right>$, where $d \ge 3$ is odd, $a_2', a_3' \ge 2$ such that $\GCD(a_2', a_3') = 1$, $4 \in \left<a_2', a_3'\right>$, $4 \not\in \{a_2', a_3'\}$, and either $a_2'$ or $a_3'$ is even. 
\end{enumerate}
Firstly, we consider Case (i). Notice that $a_2 \le 13$ since $13 \in H_1$, while $a_2 \ne 5$ by Lemma \ref{lemma2}. If $a_2 < 13$, then $a_2 = 7, 9, 11$. Let us write 
$$13 = 4 \alpha + a_2 \beta + 2n \gamma$$ with $\alpha, \gamma \ge 0$ and $\beta > 0$. Suppose that $a_2=7$; hence $H_1=\left<4,7,2n\right>$. Then, $\beta = 1$ and 
$$
3 =2\alpha + n \gamma.
$$ 
Since $n \ge 3$ and $n$ is an odd integer, we have $\alpha=0$ and $n=3$, whence $H_1=\left<4, 7, 6\right>$, which violates Lemma \ref{lemma1}. Suppose that $a_2 = 9$. Then, since $13 = 4\alpha + 9\beta + 2n \gamma$, we have $\beta = 1$ and $2 =2\alpha + n\gamma$. Therefore, $\gamma = 0$ and $\alpha =1$, while $n =3, 5, 7$ since $9 \in \left<2,n\right>$ and $n < 9$. Suppose that $a_2=11$. Then $13 = 4\alpha + 11\beta +2n \gamma$, so that $\beta = 1$ and $1 = 2\alpha + n \gamma$, which impossible. Consequently, if $a_2 < 13$, then $H_1=\left<4, 9, 14\right>$, since $H_1 \ne \left<4,6,9\right>, \left<4,9,10\right>$ by Lemma \ref{lemma1}. If $a_2 = 13$, then $H_1 =\left<4, 13, 2n\right>$ where $n = 3, 5, 7, 9, 11$, because $n < a_2$ and $n$ is odd.

We now consider Case (ii) and will show that it doesn't occur. Without loss of generality we may assume that $a_2' < a_3'$. We set $a_2 = da_2'$ and $a_3 = da_3'$. Then $a_2 \le 13$, since $13 \in H_1$. Therefore, $a_2 =6,7,9, 10,11,13$, since $\mu(H_1)=3$ and $5 \not\in H_1$. The number $a_2$ cannot be prime; otherwise $d= a_2$ and $a_2$ divides $a_3$. Therefore, $a_2= 6, 9, 10$. Suppose $a_2=6$. Then, $d=3$ and $a_3 = 3q$,  where $q=a_3' \ge 3$ is odd. Hence, $q=3$ because $13 \in H_1$, so that $H_1 =\left<4,6,9\right>$, which violates Lemma \ref{lemma1}. Suppose $a_2=9$. Then $d=3$ and $a_3=3q$, where $q \ge 4$ is even. Hence, because $a_3 < \rmc(\left<4,9\right>) = 24$ (notice that $H_1=\left<4, 9, a_3\right>$ and $\mu(H_1)=3$), we have $q=4$ or $q=6$, so that $H_1=\left<4,9\right>$, which is absurd. Suppose $a_2=10$. Then $d=5$ and $a_3=5q$. Hence, $q \ge 3$ is odd, because $4 \in \left<2,q \right>$ and $\operatorname{GCD}(2,q)=1$. Therefore, $a_3 \ge 15$, whence $13 \not\in \left<4,10, a_3\right>=H_1$. This is absurd. Thus, Case (ii) is excluded and Assertion (2) follows.
\end{proof}

\begin{lem}\label{4.8}
\begin{enumerate}[$(1)$]
\item Suppose that $H_1 = \left<4, 9\right>$ or $H_1 = \left<4, 9, 14\right>$. Then $b-a = 9$. 
\item Suppose that $H_1 =\left<4, 2n, 13\right>$ for some $n \in \{1, 3, 5, 7, 9, 11\}$. Then $b-a = 2n$. 
\end{enumerate}
\end{lem}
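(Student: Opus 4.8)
The plan is to reduce both assertions to the single structural identity
$$b-a=\min\,(H_1\setminus H),$$
after which each case is a one-line computation inside the explicitly given semigroup. Granting the identity, Assertion~(1) follows because $9$ is odd while every odd element of $H=\langle 4,13\rangle$ is at least $13$ (so $9\in H_1\setminus H$), whereas any element of $H_1=\langle 4,9\rangle$ or $\langle 4,9,14\rangle$ lying strictly below $9$ can only be a non-negative multiple of $4$, hence lies in $\{0,4,8\}\subseteq H$; thus $\min(H_1\setminus H)=9$. Likewise Assertion~(2) follows because, for $n\in\{1,3,5,7,9,11\}$ we have $2n\equiv 2\pmod 4$ and $2n\le 22$, while $4x+13z\equiv z\pmod 4$ shows that the smallest element of $H$ congruent to $2$ modulo $4$ is $26=2\cdot 13$; hence $2n\notin H$, i.e. $2n\in H_1\setminus H$. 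Conversely any element of $H_1=\langle 4,2n,13\rangle$ smaller than $2n$ must have the coefficient of $2n$ equal to $0$, so it is a combination of $4$ and $13$ alone and lies in $H$. Therefore $\min(H_1\setminus H)=2n$.

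So the real work is to prove the identity $b-a=\min(H_1\setminus H)$. First I would dispatch the easy inequality: by Setting~\ref{setting2} we have $b-a=\rmo(\xi)\in H_1$ and $b-a\notin H$ (Theorem~\ref{2.8}(3)), so $b-a\in H_1\setminus H$ and in particular $b-a\ge\min(H_1\setminus H)$ is the only thing in doubt; concretely, it remains to show that $b-a$ is a \emph{lower} bound for $H_1\setminus H$.

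For the lower bound I would exploit the presentation $B=A+A\xi$ recorded in Setting~\ref{setting2}. Fix $w\in H_1\setminus H$ and choose $0\ne\zeta\in B$ with $\rmo(\zeta)=w$, writing $\zeta=p+q\xi$ with $p,q\in A$. If $q=0$ then $w=\rmo(p)\in v(A)=H$, a contradiction, so $q\ne 0$ and $\rmo(q\xi)=\rmo(q)+(b-a)\ge b-a$. If one had $\rmo(p)<\rmo(q\xi)$, then the lowest-order term of $\zeta$ would come solely from $p$, forcing $w=\rmo(\zeta)=\rmo(p)\in H$, again impossible; hence $\rmo(p)\ge\rmo(q\xi)$, and therefore $w=\rmo(\zeta)\ge\rmo(q\xi)\ge b-a$, regardless of any cancellation. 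This proves $b-a\le w$ for every $w\in H_1\setminus H$, and the identity follows.

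The hard part is precisely this identity, in particular the order comparison in $\zeta=p+q\xi$; the subtlety to handle with care is the case $\rmo(p)=\rmo(q\xi)$, where the leading terms may cancel and raise $\rmo(\zeta)$. Since cancellation can only \emph{increase} $w$, however, the bound $w\ge\rmo(q\xi)\ge b-a$ is unaffected, and everything after the identity is the routine inspection of the listed semigroups described above.
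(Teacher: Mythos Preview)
Your proof is correct and in fact establishes a cleaner general statement than the paper's proof does. The identity $b-a=\min(H_1\setminus H)$, which you extract from the $A$-module decomposition $B=A+A\xi$ by a single valuation comparison, holds for any two-generated Ulrich ideal in any core of $V$; the case analysis at the end is then trivial.

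The paper's argument is more ad hoc. For each of the listed semigroups it first observes $b-a\in H_1\setminus H$ to get the lower bound, then for the upper bound it picks $\eta\in B$ with $\rmo(\eta)$ equal to the target value and expands $\eta$ in the \emph{$B$-algebra} generators, writing $\eta=t^4\varphi+t^{13}\psi+\xi\delta$ with $\varphi,\psi,\delta\in B$; assuming $b-a$ is too large forces $\rmo(\varphi)$ (in case~(1)) or $\rmo(t^4\varphi+t^{13}\psi)$ (in case~(2)) to take a value that leads to a contradiction---in~(2), for instance, one concludes $\m_B=\m B$ and hence $A=B$. Your use of the $A$-module splitting rather than the $B$-algebra generators sidesteps these case distinctions entirely and yields a statement that would also streamline the analogous Proposition~\ref{3.3} in Section~3. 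The paper's route, on the other hand, stays closer to the explicit ring $k[[t^4,t^{13},\xi]]$ and makes the structure of $\m_B$ visible, which is used elsewhere.
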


\begin{proof}
$(1)$~We choose $\eta \in B$ so that $\rmo(\eta) = 9$. Firstly, suppose that $H_1=\left<4,9\right>$ and write $b-a = 4 \alpha + 9 \beta$ with $\alpha, \beta  \ge 0$. Then $\beta > 0$ since $b-a \not\in H$, whence $b-a \ge 9$. Let us write 
$$\eta = t^4 \varphi + t^{13} \psi + \xi \delta$$ with $\varphi, \psi, \delta \in B$. If $b-a= \rmo(\xi) > 9$, then $\rmo(\varphi) = 5 \in H_1=\left<4,9 \right>$, which is impossible. Hence, $b-a = 9$. Next, suppose that $H_1=\left<4,9, 
14\right>$ and write $$b-a= 4\alpha +9 \beta + 14 \gamma $$
with $\alpha, \beta, \gamma \in B$. We then have $b-a \ge 9$, since $b -a \not\in H$. Because $\eta \in k[[t^4, t^{13}, \xi]]$, writing 
$\eta = t^4 \varphi + t^{13} \psi + \xi \delta$ with $\varphi, \psi, \delta \in B$, similarly as in the case where $H_1=\left<4,9\right>$ we get $b-a \le 9$, whence $b-a=9$ as claimed.

$(2)$~Since $b-a \in H_1=\left<4, 2n, 13\right> \setminus H$, we have $b-a \ge 2n$. Take $\eta \in B$ so that  $\rmo(\eta) = 2n$ and write $\eta = t^4 \varphi + t^{13}\psi + \xi \delta$ with $\varphi, \psi, \delta \in B$. If $b-a=\rmo(\xi) > 2n$, then $\rmo(t^4 \varphi + t^{13}\psi) = 2n$, so that $$B = k[[t^4, t^{13}, t^4 \varphi + t^{13}\psi]]$$ and $\m_B = (t^4, t^{13}, t^4 \varphi + t^{13}\psi) = (t^4, t^{13})B = \m B$. Hence,  $\ell_A(B/\m B) = 1$, so that $A = B$. This is absurd. Thus $b-a = 2n$. 
\end{proof}

We are now ready to determine the pair $(a,b)$. Let $v(I) = \{\rmo(x) \mid 0 \ne x \in I\}$. Hence $v(I) = a + H_1$, because $I = fB$.

\begin{prop}
\begin{enumerate}[$(1)$]
\item If $H_1 = \left<4, 9\right>$, then $(a, b) = (12, 21)$. 
\item If $H_1 = \left<4, 9, 14\right>$, then $(a, b) = (16, 25)$. 
\item Let $n \in \{ 1, 3, 5, 7, 9, 11\}$. If $H_1 =\left<4, 2n, 13\right>$, then $(a, b) = (26-2n, 26)$. 
\end{enumerate}
\end{prop}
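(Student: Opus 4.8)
The plan is to reduce the determination of the pair $(a,b)$ to a single colength computation that recovers $a$ from the value semigroup $H_1$ alone, after which $b$ is read off from Lemma \ref{4.8}. Throughout I use that $I=fB$, so the inclusions $fA \subseteq I = fB \subseteq A \subseteq B \subseteq V$ are available and length-additivity gives
\[
\ell_A(A/I) = \ell_A(A/fA) - \ell_A(I/fA) = \ell_A(A/fA) - \ell_A(B/A),
\]
the last equality because multiplication by $f$ is an $A$-isomorphism $B/A \cong fB/fA$. First I would compute $\ell_A(A/fA) = \rme^0_{(f)}(A) = \rmo(f) = a$ exactly as in the proof of Lemma \ref{2.4}, and $\ell_A(B/A) = \ell_A(V/A) - \ell_A(V/B)$ from the chain $A \subseteq B \subseteq V$. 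Since $V/A$ (resp.\ $V/B$) has the monomial $k$-basis $\{t^m \mid m \notin H\}$ (resp.\ $\{t^m \mid m \notin H_1\}$), these two colengths equal the genera $g(H):=\#(\mathbb{Z}_{\ge 0}\setminus H)$ and $g(H_1)$. As $g(H) = \frac{(4-1)(13-1)}{2} = 18$ and $a = 2\,\ell_A(A/I)$ by Lemma \ref{2.4}, combining the displays yields the key identity
\[
a = 2\bigl(g(H) - g(H_1)\bigr) = 36 - 2\,g(H_1).
\]

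With this identity in hand, each case becomes a genus count followed by an application of Lemma \ref{4.8}. For $H_1 = \left<4,9\right>$ one has $g(H_1) = \frac{(4-1)(9-1)}{2} = 12$, hence $a = 12$, and $b-a = 9$ by Lemma \ref{4.8}(1) gives $b = 21$. For $H_1 = \left<4,9,14\right>$ a direct enumeration of the gaps $\{1,2,3,5,6,7,10,11,15,19\}$ yields $g(H_1) = 10$, hence $a = 16$ and $b = 25$. For $H_1 = \left<4, 2n, 13\right>$ with $n$ odd I would argue uniformly: the even elements form $\left<4,2n\right> = 2\left<2,n\right>$, contributing $\frac{n-1}{2}$ even gaps, while, since $2\cdot 13 \in \left<4,2n\right>$ for $n \le 13$, the odd elements are exactly $13 + \left<4,2n\right>$, contributing the six gaps $1,3,5,7,9,11$ below $13$ together with a further $\frac{n-1}{2}$ gaps above $13$. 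Thus $g(H_1) = n+5$, so $a = 26 - 2n$, and $b-a = 2n$ by Lemma \ref{4.8}(2) gives $b = 26$.

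The only genuinely delicate point is the colength identity itself: one must keep the inclusions $fA \subseteq I \subseteq A \subseteq B \subseteq V$ straight and combine length-additivity with the $f$-multiplication isomorphism $B/A \cong fB/fA$; everything after that is bookkeeping. The genus of the three-generated complete intersection $\left<4,9,14\right>$ is the one value I would simply compute by listing its gaps, whereas the infinite family $\left<4,2n,13\right>$ succumbs to the even/odd splitting above. I expect no obstacle beyond this routine enumeration, and I note that only the numeric value $g(H) = 18$, not the full symmetry of $H$, enters the argument.
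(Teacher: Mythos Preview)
Your argument is correct and is genuinely different from the paper's route. The key identity
\[
a \;=\; 2\bigl(g(H)-g(H_1)\bigr)
\]
follows exactly as you say: from $I=fB$ one gets $\ell_A(I/fA)=\ell_A(B/A)$ via multiplication by $f$, and $\ell_A(B/A)=\ell_A(V/A)-\ell_A(V/B)=g(H)-g(H_1)$; the last equality holds for \emph{any} core of $V$, not just a semigroup ring, since the images of $\{t^m \mid m\notin v(\,\cdot\,)\}$ form a $k$-basis of the quotient. Combined with Lemma \ref{2.4} this pins down $a$ from $H_1$ alone, and Lemma \ref{4.8} then gives $b$. Your genus counts for $\left<4,9\right>$, $\left<4,9,14\right>$, and the family $\left<4,2n,13\right>$ (via the even/odd splitting, using $26\in\left<4,2n\right>$) are all accurate.

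The paper, by contrast, proceeds by exhaustive elimination: it lists every even $a\in H$ with $a<c=36$, fixes $b-a$ from Lemma \ref{4.8}, and then rules out each bad pair $(a,b)$ using the constraints $b\in H$, $2b-a\in H$, $t^{c-(b-a)}V\cap A\subseteq I$, and the inclusion $v(I)=a+H_1\subseteq H$, sometimes descending to explicit coefficient comparisons in $t^{30}=f\varphi+g\psi$. Your length formula replaces all of this casework with a single colength computation and is in fact valid in the general setting of Theorem \ref{2.8}, not merely for $H=\left<4,13\right>$. What the paper's approach buys is that it exercises the same toolkit (Lemma \ref{2.3}, the tables of $a,\ b,\ 2b-a,\ c-(b-a)$) used throughout Sections 3 and 4, keeping the exposition uniform; what yours buys is a conceptual shortcut that makes the answer transparent and would streamline analogous computations for other $H$.
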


\begin{proof}
$(1), (2)$~We have $a \in \{4,   8,  12, 16, 20, 24, 26, 28, 30, 32, 34\}$, since $a \in H$ is even and $a < c=36$. We see $b-a=9$ by Lemma \ref{4.8}, and similarly as in the proof of Example \ref{ex0} we consider the table

\vspace{-0.3em}
$$
\begin{tabular}{|c|c|c|c|c|c|c|c|c|c|c|c|}
\hline
$a$ & $4$  & $8$ & $12$ & $16$ & $20$ & $24$ & $26$ & $28$ & $30$ & $32$ & $34$\\ \hline 
$b-a$ & $9$  & $9$ & $9$ & $9$ & $9$ & $9$ & $9$ & $9$ & $9$ & $9$ & $9$\\ \hline 
$b$ & $13$  & $17$ & $21$ & $25$ & $29$ & $33$ & $35$ & $37$ & $39$ &$41$ & $43$\\ \hline
$2b-a$ & $22$  & $26$ & $30$ & $34$ & $38$ & $42$ & $44$ & $46$ & $48$ & $50$ & $52$ \\ \hline
$36-(b-a)$ & $27$  & $27$ & $27$ & $27$ & $27$ & $27$ & $27$ & $27$ & $27$ & $27$ & $27$\\ \hline
\end{tabular} \vspace{1em}
$$
where the values of each column indicate the possible values of $b, 2b-a$, and $36 -(b-a)$, when $a$ is given. The value $36-(b-a)=27$ and Lemma \ref{2.3} indicate that $t^n \in I=(f,g)$ for every $n \in H$ such that $n \ge 27$.

This table tells us many things. For example, $a \ne 4$, since $2b -a= 22 \not\in H$. We have $a \ne 24$. In fact, if $a = 24$, then $b= 33$, so that $t^{29} \in I= (f,g)$, which is impossible because $5 \not\in H$. Thanks to the same observation, we readily get that $a \ne 26, 28, 30, 32, 34$. Hence, $a \in \{8, 12, 16, 20\}$. We will show that $a \ne 8, 20$. Suppose that $a=8$. We then have $v(I) = 8 + H_1 \ni 35$, since $9 \in H_1$. This is however, impossible, because $v(I) \subseteq H$ and $35 =c-1 \not\in H$. Hence, $a \ne 8$. If $a=20$, then $t^{30} \in I$ since $36-(b-a) < 30$. We consider the expression 
\begin{eqnarray*}
t^{30}&=& (t^{20}+f_{21}t^{21}+ \cdots)(\varphi_0+\varphi_4t^4 + \varphi_8t^8+\varphi_{12}t^{12} + \varphi_{13}t^{13}+\cdots)\\
&+&(t^{29}+g_{30}t^{30}+\cdots)(\psi_0+\psi_4t^4 + \psi_8t^8+\psi_{12}t^{12} + \cdots)
\end{eqnarray*}
with coefficients $f_{i}, g_{i}, \varphi_{i}, \psi_{i} \in k$ for each $i \in H$. Then, comparing the order of both sides, it is straightforward to check that $\varphi_0,\varphi_4,\varphi_8=0$ and $\psi_0=0$, so that the term $t^{30}$ doesn't appear in the right hand side. Hence, $a \ne 20$.

Let $a = 12$. If $H_1 = \left<4, 9, 14\right>$, then $H_1 \ni 23$, so that $v(I)=12 +H_1 \ni 35$. This is impossible. Therefore, $H_1 \ne \left<4, 9, 14\right>$ and hence $H_1 = \left<4,9\right>$, if $a = 12$.

Let $a=16$ and we will show $H_1=\left<4, 9, 14 \right>$. Assume that $H_1 \ne \left<4, 9, 14 \right>$. We then have $H_1=\left<4,9 \right>$, whence $v(I) \ni 16+18=34$, so that $t^{34} \in I$ because $35 \not\in H$ and $t^{36}V \subseteq I$. Therefore, the $k$-space $A/I$ is spanned by the images of the monomials $\{t^n\}_{n \in H, \ 0 \le n \le 33}$. Because among the images of these monomials, there are relations induced from the vanishing $$f= t^{16} + (\text{higher~terms}) \equiv 0~\mod~I\ \ \text{and}\ \  g=t^{25}+(\text{higher~terms})\equiv 0 ~\mod~I,$$
the $k$-space $A/I$ is spanned by the images of the following $9$ monomials
$$
1, t^4, t^8, t^{12}, t^{13}, t^{17}, t^{21}, t^{26}, \ \text{and} \ t^{30}.
$$
Of course, these nine monomials cannot be linearly independent over $k$, since $$\ell_A(A/I) = \frac{a}{2}=8$$ (see Theorem \ref{2.8} (3)).  Therefore, there must be a non-trivial relation, say
$$
a_0 + a_4t^4 + a_8t^8 + a_{12}t^{12} + a_{13}t^{13} + a_{17}t^{17} + a_{21}t^{21} + a_{26}t^{26} + a_{30}t^{30} \in I
$$
with $a_i \in k$. Nevertheless, because $v(I) = 16 +  \left<4, 9\right>$, we readily get $a_0=a_4=a_8=a_{12}=a_{13}=0$, so that 
$$
a_{17}t^{17} + a_{21}t^{21} + a_{26}t^{26} + a_{30}t^{30} \in I.
$$
Hence, $a_{17}=a_{21}=a_{26}=a_{26}=a_{30}=0$ also, because $17,21,26,30 \not\in 16+\left<4,9 \right>$, which violates the non-triviality of the relation. Thus, if $a=16$, then $H_1 \ne \left<4,9 \right>$, so that $H_1=\left<4,9,14 \right>$. This completes the proof of Assertions (1) and (2).

$(3)$~We have $a \in \{4,   8,  12, 16, 20, 24, 26, 28, 30, 32, 34\}$ and $b-a \in \{2,6,10,14,18,22\}$. Similarly as above, we consider the tables where $b-a$ is fixed and $a$ takes various values. Our aim is to show that only the cases $(a,b)=(4\ell, 26)$ $(1 \le \ell \le 6)$ are possible.

Let us examine the case where $b-a=2$ or $6$. Suppose that $b-a=2$ and we have the following.

\vspace{-0.3em}
$$
\begin{tabular}{|c|c|c|c|c|c|c|c|c|c|c|c|}
\hline
$a$ & $4$  & $8$ & $12$ & $16$ & $20$ & $24$ & $26$ & $28$ & $30$ & $32$ & $34$\\ \hline 
$b-a$ & $2$  & $2$ & $2$ & $2$ & $2$ & $2$ & $2$ & $2$ & $2$ & $2$ & $2$\\ \hline 
$b$ & $6$  & $10$ & $14$ & $18$ & $22$ & $26$ & $28$ & $30$ & $32$ &$34$ & $36$\\ \hline
$2b-a$ & ${ \times}$  & ${ \times } $ & ${ \times } $ & ${  \times} $ & ${ \times } $ & $28$ & $30$ & $32$ & $34$ & $36$ & $38$ \\ \hline
$36-(b-a)$ & ${ \times } $  & ${  \times} $ & ${ \times } $ & ${ \times } $ & ${ \times } $ & $34$ & $34$ & $34$ & $34$ & $34$ & $34$\\ \hline
\end{tabular} \vspace{1em}
$$
Therefore, because $6, 10,14,18,22 \not\in H$, $a \ne 4, 8, 12,16, 20$. If $a = 28, 30, 32, 34$, then, since $37 \in H$ and $34 < 37$, we have $t^{37} \in I=(f,g)$. This is, however,  impossible, which we can check similarly as in the case where $a=20$ of Assertions (1) and (2), writing $t^{37} = f\varphi + g \psi$ with $\varphi, \psi \in A$. Hence, $(a,b)=(24, 26)$, if $b-a=2$.

 Suppose that $b-a=6$ and we have the following. 
\vspace{-0.3em}
$$
\begin{tabular}{|c|c|c|c|c|c|c|c|c|c|c|c|}
\hline
$a$ & $4$  & $8$ & $12$ & $16$ & $20$ & $24$ & $26$ & $28$ & $30$ & $32$ & $34$\\ \hline 
$b-a$ & $6$  & $6$ & $6$ & $6$ & $6$ & $6$ & $6$ & $6$ & $6$ & $6$ & $6$\\ \hline 
$b$ & $10$  & $14$ & $18$ & $22$ & $26$ & $30$ & $32$ & $34$ & $36$ &$38$ & $40$\\ \hline
$2b-a$ & ${ \times}$  & ${\times } $ & ${\times } $ & ${\times } $ & ${ 32} $ & $36$ & $38$ & $40$ & $42$ & $44$ & $46$ \\ \hline
$36-(b-a)$ & ${ \times} $  & ${\times} $ & ${\times } $ & ${\times} $ & ${30 } $ & $30$ & $30$ & $30$ & $30$ & $30$ & $30$\\ \hline
\end{tabular} \vspace{1em}
$$
Hence, $a \ne 4, 8, 12,16$. Suppose $a \ge 24$. Then, since $33 \in H$ and $33 > 30$, we have $t^{33} \in I=(f,g)$. This is impossible, which we can show, writing $t^{37} = f\varphi + g \psi$ with $\varphi, \psi \in A$ and just counting $\rmo(\varphi)$ and $\rmo(\psi)$. Hence, $(a,b)=(20, 26)$, if $b-a=6$. 

The proofs of the other cases are quite similar to that of the case where $b-a=6$, which we would like to leave to the reader.
\end{proof}

We are now in a position to describe the normal form of systems of generators for a given $I \in \calX_A$. First we consider the case where $H_1 =  \left<4, 9 \right>$ or $H_1= \left<4,9, 14 \right>$.

\begin{thm}\label{b-a=9}
\begin{enumerate}[$(1)$]
\item Suppose that $H_1 = \left<4, 9\right>$. Then 
$$
I =(t^{12} + 2\beta t^{17} + \alpha t^{26}, t^{21} +\beta t^{26})
$$
where $\alpha, \beta \in k$ and $\beta \ne 0$.
\item Suppose that $H_1 = \left<4, 9, 14\right>$. Then 
$$
I = (t^{16} + 2\beta t^{17} + \alpha_2 t^{21} + \alpha_3t^{26}, t^{25} + \beta t^{26}) 
$$
where $\alpha_2, \alpha_3, \beta \in k$ and $\beta  \ne 0$.
\end{enumerate}
\end{thm}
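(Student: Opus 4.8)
The plan is to upgrade the pair $(a,b)$ found in the preceding proposition to an explicit normal form for a minimal generating set, and then to squeeze the coefficient relations out of the single identity $I^2=fI$.

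I would first record the free monomials. As $I=fB$, one has $v(I)=a+H_1$, so in case $(1)$ $v(I)=12+\left<4,9\right>$ and $H\setminus v(I)=\{0,4,8,13,17,26\}$, consistent with $\ell_A(A/I)=a/2=6$ (Lemma \ref{2.4}); in case $(2)$ $v(I)=16+\left<4,9,14\right>$ and $H\setminus v(I)=\{0,4,8,12,13,17,21,26\}$. Running the reduction of Corollary \ref{3.7} — multiplying $f$ by units $1+ct^m$ $(m\in H)$ to clear terms in $a+H$, subtracting $A$-multiples of $g$ to clear terms in $b+H$, and absorbing the tail via $\fkc\subseteq I$ (Lemma \ref{2.3}) — leaves exactly the monomials indexed by $H\setminus v(I)$ lying above $a$ in $f$ and above $b$ in $g$. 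This produces $f=t^{12}+\gamma_1t^{13}+\gamma_2t^{17}+\gamma_3t^{26}$, $g=t^{21}+\beta t^{26}$ in case $(1)$, and $f=t^{16}+\gamma_1t^{17}+\gamma_2t^{21}+\gamma_3t^{26}$, $g=t^{25}+\beta t^{26}$ in case $(2)$, with $\gamma_i,\beta\in k$.

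Next I would extract the relations from $I^2=fI$. Since $h:=g^2/f\in I\subseteq A$, the series $h$ is supported on $H$. Writing $h=t^{2b-a}Q$ with $Q=(t^{-b}g)^2(t^{-a}f)^{-1}$ a unit power series, support on $H$ forces $[t^m]Q=0$ for every $m\ge0$ with $2b-a+m\notin H$. In case $(1)$, $2b-a=30$ and the only such $m$ are $1$ and $5$ (hitting $31,35\notin H$); from $Q=(1+\beta t^5)^2(1+\gamma_1t+\gamma_2t^5+\gamma_3t^{14})^{-1}$ one reads $[t^1]Q=-\gamma_1$, giving $\gamma_1=0$, and then $[t^5]Q=2\beta-\gamma_2$, giving $\gamma_2=2\beta$, with $\gamma_3=:\alpha$ free. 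In case $(2)$, $2b-a=34$, the sole offending exponent is $m=1$ (hitting $35$), and $[t^1]Q=2\beta-\gamma_1$ gives $\gamma_1=2\beta$, leaving $\gamma_2,\gamma_3$ free. This is exactly the displayed shape.

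The delicate point, and where I expect the real work, is $\beta\ne0$: the support condition $h\in A$ is strictly weaker than the membership $h\in I$, and only the latter sees $\beta$. I would argue inside $B=A^I=A+A\xi$, which is a genuine ring because $I$ is Ulrich, so that $\xi^2\in B$ (Lemma \ref{2.1}). From the normal form, $\xi=g/f=t^9-\beta t^{14}+\cdots$ in case $(1)$, and its square $\xi^2=t^{18}+\cdots$ has order $18$; since $18\notin H$ and $18\notin 9+H$, the leading term $t^{18}$ of $\xi^2$ can be realized as $s+r\xi$ $(s,r\in A)$ only by letting a $t^4$-term of $r$ meet the $t^{14}$-term of $\xi$, which is possible precisely when $\beta\ne0$. (Dually, if $\beta=0$ one checks directly that $h\equiv t^{30}\not\equiv0\bmod I$, so $I^2\ne fI$ and $I$ is not Ulrich.) Case $(2)$ is parallel, the subleading term of $\xi$ now being $-\beta t^{10}$ and the relevant product $t^8\xi$. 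Finally, for the converse direction needed when these families are assembled into $\calX_A$, I would follow Theorem \ref{e=3}: establish $t^n\in I$ for all $n\in H$ with $n\ge c-(b-a)$ as in Lemma \ref{2.3}, deduce $g^2/f\in I$ and hence $I^2=fI$, and verify $\ell_A(A/I)=a/2$, whereupon the surjection $A/I\to I/(f)$, $1\mapsto g$, is forced to be an isomorphism.
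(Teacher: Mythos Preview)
Your reduction to the normal form and the extraction of the coefficient relations from $g^2/f\in A$ are correct and essentially what the paper does (the paper gets $\gamma_1=0$ in case~(1) slightly differently, from $14\notin H_1$, but your route via $[t^1]Q=0$ is equally valid). Your $\xi^2\in A+A\xi$ argument for $\beta\ne0$ in case~(1) is correct and is a pleasant alternative to the paper's method: since $\gamma_2=2\beta$, one has $\xi=t^9-\beta t^{14}+2\beta^2t^{19}+\cdots$, so the only contribution to $[t^{18}](s+r\xi)$ is $r_4\cdot[t^{14}]\xi=-\beta r_4$, forcing $\beta\ne0$.

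The gap is in case~(2). Your claim that ``the relevant product is $t^8\xi$'' is not right: here $\gamma_2$ is a \emph{free} parameter (only $\gamma_1=2\beta$ is constrained), and if one sets $\beta=0$ then $\xi=t^9-\gamma_2 t^{14}+\cdots$, so $[t^{18}](s+r\xi)=r_4\cdot[t^{14}]\xi=-\gamma_2 r_4$, which is perfectly capable of equalling $1$ when $\gamma_2\ne0$. Thus the $t^{18}$ obstruction does not isolate $\beta$. Your ``dual'' check also misfires in case~(2): with $\beta=0$ one has $h\equiv t^{34}\bmod\fkc$, and $34$ \emph{can} appear in $v((f,g))$ via $\gamma_2\varphi_{13}$. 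The paper instead uses $t^{30}\in I$ (which holds by Lemma~\ref{2.3} since $30\ge c-(b-a)=27$): writing $t^{30}=f\varphi+g\psi$ and comparing the coefficients of $t^{20}$ and $t^{30}$ gives $\varphi_4=0$ and $2\beta\varphi_{13}+\gamma_3\varphi_4+\beta\psi_4=1$, hence $\beta\ne0$. An equally clean fix in your spirit is to use $14\in H_1=v(B)$ directly: with $\beta=0$ and $r_0=0$ (forced by $[t^9]$), one computes $[t^{14}](s+r\xi)=r_4\cdot[t^{10}]\xi=0$, so no element of $A+A\xi$ has order $14$, contradicting $14\in H_1$.
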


\begin{proof}
Since $c-(b-a) =36-9= 27$, by Lemma \ref{2.3} we have $t^q \in I$ for all $q \in H$ such that $q \ge 27$. 

(1)~Therefore, for the same reason as in the proof of Corollary \ref{3.7} we may assume that
$$
f = t^{12} + \alpha_1 t^{13} + \alpha_2 t^{17} + \alpha_3t^{26} \ \ \text{and} \ \ g = t^{21} + \beta_1 t^{26} 
$$
with $\alpha_1, \alpha_2, \alpha_3, \beta_1 \in k$. Hence $$\xi = \frac{g}{f} = t^9 -\alpha_1t^{10} + \eta$$where $\eta \in V$ with $\rmo(\eta) \ge 11$. Because $t^{13}, t^4 \xi \in B$, we have $-\alpha_1t^{14} + t^4 \eta \in B$, which implies $\alpha_1 =0$, since $14 \not\in H_1=\left<4,9\right>$. Therefore, continuing the division algorithm, we get
$$
\xi = t^9 + (\beta_1 - \alpha_2)t^{14}  - \alpha_2(\beta_1 - \alpha_2)t^{19}  - \alpha_3t^{23} + \rho
$$ 
where $\rho \in V$ with $\rmo(\rho) \ge 24$. Consequently, expanding 
$$
\frac{g^2}{f} = g\xi = (t^{21} + \beta_1 t^{26}){\cdot} \left(t^9 + (\beta_1 - \alpha_2)t^{14}  - \alpha_2(\beta_1 - \alpha_2)t^{19}  - \alpha_3t^{23} + \rho\right)
$$
and considering the coefficient of $t^{35}$, we get $2\beta_1 = \alpha_2$, because $\frac{g^2}{f} \in I \subseteq A$ and $t^{35} \notin A$. Hence $$I =(t^{12} + 2\beta_1 t^{17} + \alpha_3 t^{26}, t^{21} +\beta_1t^{26}).$$ To see that $\beta_1 \ne 0$, we choose $\varphi, \psi \in A$ so that $t^{30} = f \varphi + g \psi$. Let $\varphi = \sum_{i=0}^{\infty}\varphi_i t^i$ and $\psi = \sum_{i=0}^{\infty}\psi_i t^i$ with $\varphi_i, \psi_i \in k$. Then, comparing the coefficients of $t^{30}$ and $t^{16}$ of both sides in the equation 
$$t^{30} = f  {\cdot} \sum_{i=0}^{\infty}\varphi_i t^i + g{\cdot}\sum_{i=0}^{\infty}\psi_i t^i,$$ 
we obtain 
$$
2 \beta_1 \varphi_{13}  + \varphi_4\alpha_3 + \beta_1 \psi_4 = 1 \ \ \text{and} \ \ \varphi_4 = 0. 
$$
Hence, $\beta_1 \ne 0$, which proves Assertion (1).

(2)~We may assume
$$
f = t^{16} + \alpha_1 t^{17} + \alpha_2 t^{21} + \alpha_3t^{26} \ \ \text{and} \ \ g = t^{25} + \beta_1 t^{26} 
$$
with $\alpha_1, \alpha_2, \alpha_3, \beta_1 \in k$. Since 
$$
\xi = \frac{g}{f} = t^9 + (\beta_1-\alpha_1)t^{10} - \alpha_1(\beta_1-\alpha_1)t^{11} + \rho
$$
with $\rmo(\rho) \ge 12$,
we get
$
\frac{g^2}{f} = g\xi \equiv t^{34} + (2 \beta_1 - \alpha_1)t^{35} \   \mod \ t^{36} V,
$
which implies $2 \beta_1 = \alpha_1$, because $\frac{g^2}{f} \in I$ but $t^{35} \not\in A$. We take $\varphi=\sum_{i=0}^{\infty}\varphi_i t^i, \psi=\sum_{i=0}^{\infty}\psi_i t^i\in A$ so that 
$$t^{30} = f  {\cdot} \sum_{i=0}^{\infty}\varphi_i t^i + g{\cdot}\sum_{i=0}^{\infty}\psi_i t^i.$$ Then, comparing the coefficients of $t^{30}$ and $t^{20}$ of both sides, we get $$2 \beta_1 \varphi_{13}  + \varphi_4\alpha_3 + \beta_1 \psi_4 = 1\ \  \text{and} \ \ \varphi_4 = 0.$$ Hence, $\beta_1 \ne 0$, which proves Assertion (2).
\end{proof}

If $H_1 =\left<4, 2n, 13\right>$, we have the following.

\begin{thm}\label{413}
Suppose that $H_1 =\left<4, 2n, 13\right>$ for with $n \in \{1, 3, 5, 7, 9, 11\}$. 
\begin{enumerate}[$(1)$]
\item If $n = 11$, then  
$$
I = (t^{4} + \alpha t^{13}, t^{26}) 
$$
where $\alpha \in k$.
\item If $n = 9$, then  
$$
I= (t^{8} + \alpha_1 t^{13} + \alpha_2t^{17}, t^{26}) 
$$
where $\alpha_1, \alpha_2 \in k$.
\item If $n = 7$, then  
$$
I= (t^{12} + \alpha_1 t^{13} + \alpha_2t^{17} + \alpha_3t^{21}, t^{26}) 
$$
where $\alpha_1, \alpha_2, \alpha_3 \in k$.
\item If $n = 5$, then
$$
I=(t^{16} + \alpha_1 t^{17} + \alpha_2t^{21} + \alpha_3t^{25}, t^{26}) 
$$
where $\alpha_1, \alpha_2, \alpha_3 \in k$. 
\item If $n = 3$, then 
$$
I=(t^{20} + \alpha_1 t^{21} + \alpha_2t^{25} + \alpha_3t^{29}, t^{26} + \beta t^{29}) 
$$
where $\alpha_1, \alpha_2, \alpha_3, \beta \in k$ and $\alpha_1^3 = 2\beta$. 
\item If $n=1$, then 
$$
I=(t^{24} + \alpha_1 t^{25} + \alpha_2t^{29} + \alpha_3t^{33}, t^{26} + \beta_1t^{29} + \beta_2 t^{33})  
$$
where $\alpha_1, \alpha_2, \alpha_3, \beta_1, \beta_2 \in k$ and
$$
\begin{cases}
\ \alpha_1 = 0 &\  \text{if} \  \ \ch k = 2, \\
\ \alpha_1 = \alpha_2 = \beta_1 = \beta_2 = 0 & \ \text{if} \ \ \ch k \ne  2.
\end{cases}
$$
\end{enumerate}
\end{thm}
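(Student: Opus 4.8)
The plan is to treat all six values of $n$ by the single strategy already used in Theorem \ref{b-a=9}, the point being that the number and nature of the surviving constraints change with $n$. By the Proposition just proved we have $(a,b)=(26-2n,26)$, and by Lemma \ref{4.8} that $b-a=2n$; hence $c-(b-a)=36-2n$, and Lemma \ref{2.3} gives $t^{q}\in I$ for every $q\in H$ with $q\ge 36-2n$. First I would put $f$ and $g$ into normal form exactly as in the proof of Corollary \ref{3.7}: working modulo $\fkc=t^{36}V$, I clear each term $t^{a+s}$ of $f$ with $s\in H$ by the substitution $f\mapsto(1-\lambda t^{s})f$, clear the single term $t^{26}$ of $f$ by subtracting $\lambda g$, and clear the terms $t^{26+s}$ of $g$ with $s\in H$ by a unit multiplication $g\mapsto(1-\mu t^{s})g$. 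Since $f,g\in A=k[[t^{4},t^{13}]]$ only involve exponents in $H$, a direct inspection of $H=\left<4,13\right>$ shows that what remains is supported on precisely the monomials displayed in cases $(1)$--$(6)$, so the problem reduces to determining the admissible coefficients.

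The decisive input is the relation $g^{2}\in I^{2}=fI$, which forces $\xi:=g/f$ to satisfy $g\xi=g^{2}/f\in I\subseteq A$. Since $\rmo(g^{2}/f)=52-a=26+2n$, this order is at least $c=36$ precisely when $n\ge 5$; in those cases $g^{2}/f\in\fkc\subseteq A$ automatically, no relation is imposed, one gets $g=t^{26}$, and the coefficients of $f$ stay free. This disposes of assertions $(1)$--$(4)$. For $n=3$ and $n=1$ one has $\rmo(g^{2}/f)=32$ and $28$ respectively, so I would compute $\xi$ by the division algorithm in $V$ up to order $35$ and expand $g\xi$; membership $g\xi\in A$ is then equivalent to the vanishing of the coefficient of each $t^{m}$ with $m\notin H$ and $m<36$, namely $m=35$ when $n=3$, and $m=31,35$ when $n=1$. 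For $n=3$ the $t^{35}$-coefficient should collapse to the single relation $\alpha_{1}^{3}=2\beta$ (with $\alpha_{2},\alpha_{3}$ entering only at higher order and hence remaining free), in direct analogy with the relation $2\beta=\alpha_{2}$ obtained in Theorem \ref{b-a=9}.

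The genuinely delicate case is $n=1$ (so $a=24$, $b-a=2$, $H_{1}=\left<2,13\right>$), and this is where the main obstacle lies. Here $g^{2}/f\in A$ yields only two equations, the first of which is $2\beta_{1}=\alpha_{1}^{3}$, whereas the stated answer kills four of the five parameters in characteristic $\ne 2$; thus one must use the full force of $g^{2}/f\in I$ rather than merely $g^{2}/f\in A$. Concretely, I would write $g^{2}/f=f\varphi+g\psi$ with $\varphi,\psi\in A$ (necessarily $\rmo(\varphi)=4$) and, as in the derivation of $\beta\ne0$ in Theorem \ref{b-a=9}, compare coefficients of the low-order monomials; combined with the two equations above, this finite but intricate polynomial system should force $\alpha_{1}=\alpha_{2}=\beta_{1}=\beta_{2}=0$. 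The characteristic split arises because the offending equations carry an overall factor of $2$ coming from the cross terms of $g^{2}$: when $\ch k\ne2$ that factor is invertible and the constraints are in full force, while when $\ch k=2$ they degenerate, the relation $2\beta_{1}=\alpha_{1}^{3}$ reduces to $\alpha_{1}=0$, and the remaining parameters survive. The hard part is therefore purely computational and concentrated in $n=1$: carrying the power-series division far enough, tracking the characteristic-sensitive factors of $2$ correctly, and organizing the several membership comparisons so that exactly the asserted list of constraints, and no more, is produced; the uniqueness of the displayed coefficients and the verification that each admissible choice is genuinely Ulrich are then handled when these forms are assembled into $\calX_{k[[t^4,t^{13}]]}$ in Theorem \ref{e=4}.
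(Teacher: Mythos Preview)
Your plan matches the paper's proof almost exactly: normal form plus the constraint $g^2/f\in A$ (and, for $n=1$, the stronger $g^2/f\in I$), with the characteristic split in case $(6)$ arising from the relation $2\beta_1=\alpha_1^3$.

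One point needs tightening. Your normal-form step clears $t^{26+s}$ from $g$ only for $s\in H$, but $29=26+3$ and $33=26+7$ have $3,7\notin H$, so your stated procedure leaves $\beta_1t^{29}+\beta_2t^{33}$ in $g$ for \emph{every} $n$, not just $n\le 3$; in particular it does not by itself yield the displayed $g=t^{26}$ in cases $(1)$--$(4)$ nor kill the $t^{33}$ term of $g$ in case $(5)$. The missing step is the one the paper carries out explicitly: since $t^{29},t^{33}\in I$ once $36-2n\le 29$ (respectively $\le 33$), one replaces $g$ by $t^{26}$ at the cost of adjoining these monomials as extra generators, and then checks by a direct identity (e.g.\ $t^{29}=ft^{13}-(\alpha_1t^{4}+\alpha_2t^{8}+\alpha_3t^{12})t^{26}$ when $a=16$) that they are redundant, so that $\mu_A(I)=2$ is preserved. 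This is routine but should be said; the claim ``one gets $g=t^{26}$'' does not follow from $g^2/f\in\fkc$.

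In case $(6)$ your route is a minor variant of the paper's. The paper first derives $2\beta_1=\alpha_1^3$ and $2\beta_2=3\alpha_1^2\alpha_2+\beta_1^2\alpha_1$ from $g^2/f\in A$, then subtracts $t^4f$ from $g^2/f$ to obtain $2t^{28}+\alpha_1^2t^{30}\in I$ and reads off $\alpha_1=0$ (hence $\beta_1=\beta_2=0$, then $\alpha_2=0$) from the coefficients of $t^{28},t^{29},t^{30},t^{33}$ in $2t^{28}+\alpha_1^2t^{30}=f\varphi+g\psi$. Writing $g^2/f=f\varphi+g\psi$ directly, as you propose, gives the same conclusions from the coefficients of $t^{28}$ and $t^{29}$ (forcing $\varphi_4=1$ and then $2\alpha_1=0$), and of $t^{33}$ (forcing $2\alpha_2=0$); so either route works.
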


\begin{proof}
For the same reason as in the proof of Corollary \ref{3.7}, we may assume
$$
f = t^{26-2n} + \alpha_1 t^{13} + \alpha_2t^{17} + \alpha_3 t^{21} + \alpha_4 t^{25} + \alpha_6t^{33} \ \ \text{and} \ \ g = t^{26} + \beta_1t^{29} + \beta_2t^{33}
$$
where $\alpha_1, \alpha_2, \alpha_3, \alpha_4, \alpha_5, \alpha_6, \beta_1, \beta_2 \in k$.  Notice that $t^{q} \in I$ for all $q \in H$ with $q \ge 36-2n=10+a$.

$(1)$~Therefore, because $a=4$, we have $t^{17} \in I$, whence $$I = (f, g, t^{17}) = (t^4 + \alpha_1t^{13}, t^{26}, t^{17}) = (t^4 + \alpha_1t^{13}, t^{26}),$$ where the last equality follows from the equation $t^{17} = (t^{4} + \alpha_1t^{13})t^{13} - \alpha_1t^{26}$. 

$(2)$~We have $t^{21}\in I$ and $$t^{21} = (t^8 + \alpha_1t^{13}+\alpha_2t^{17}) - (\alpha_1 + \alpha_2t^{4})t^{26},$$  whence $$I = (f, g, t^{21}) = (t^8 + \alpha_1t^{13}+\alpha_2t^{17}, t^{26}).$$ 

$(3)$~We have $t^{25} \in I$ and $$t^{25} = (t^{12} + \alpha_1t^{13} + \alpha_2t^{17} + \alpha_3t^{21})t^{13} - (\alpha_1 + \alpha_2t^4 + \alpha_3t^{8})t^{26}, $$ whence $$I = (f, g, t^{25}) = (t^{12} + \alpha_1 t^{13} + \alpha_2t^{17} + \alpha_3t^{21}, t^{26}).$$ 

$(4)$~We have $t^{29} \in I$ and $$t^{29} = (t^{16}+ \alpha_1t^{17} + \alpha_2t^{21} + \alpha_3t^{25}) t^{13}- (\alpha_1t^{4} + \alpha_2t^8 + \alpha_3t^{12})t^{26},$$ whence $$I = (f, g, t^{29}) = I=(t^{16} + \alpha_1 t^{17} + \alpha_2t^{21} + \alpha_3t^{25}, t^{26}).$$

$(5)$~We have $t^{33} \in I$. Hence $$I = (f, g, t^{33}) = (t^{20} + \alpha_1 t^{21} + \alpha_2t^{25} + \alpha_3t^{29}, t^{26} + \beta_1t^{29}, t^{33}).$$  Let $J = (t^{20} + \alpha_1 t^{21} + \alpha_2t^{25} + \alpha_3t^{29}, t^{26} + \beta_1t^{29}) \subseteq I$ and consider $v(J) = \{\rmo(h) \mid 0 \ne h \in J\}$. Then, $20, 26, 33 \in v(J)$, so that $36, 37, 38, 39 \in v(J)$ whence $J \supseteq t^{36} V=\fkc$. Because
$$
t^{34} = (t^{26} + \beta_1 t^{29})t^8 - \beta_1t^{37} \ \ \text{and} \ \ t^{33} = (t^{20} + \alpha_1t^{21} + \alpha_2t^{25} + \alpha_3t^{29})t^{13} - (\alpha_1t^{34} + \alpha_2t^{38} + \alpha_3t^{42}),
$$
we obtain $t^{34} \in J$ by the first equality, and hence $t^{33} \in J$ by the second one. Therefore, $I = J$, since $J \subseteq I=J +(t^{33})$. We now have
$$
\xi = \frac{g}{f} = t^6 -\alpha_1t^7 + \alpha_1^2t^8 + (\beta_1 - \alpha_1^3)t^9 - \alpha_1 (\beta_1 - \alpha_1^3)t^{10} + \rho$$
with $\rho \in V$ such that $\rmo(\rho) \ge 11$.
Consequently,  $2\beta_1 = \alpha_1^3$, because $$\frac{g^2}{f} = g \xi \equiv t^{32} -\alpha_1t^{33} + \alpha_1^2t^{34} + (\beta_1 - \alpha_1^3)t^{35}  \ \mod \ t^{36}V$$ and because $\frac{g^2}{f} \in A$ but $t^{35} \not\in A$.

$(6)$~Since $f = t^{24} + \alpha_1t^{25} + \alpha_2t^{29} + \alpha_3t^{33}$ and $g = t^{26} + \beta_1 t^{29} + \beta_2 t^{33}$, we get
\begin{eqnarray*}
\xi = \frac{g}{f} &=& t^2 - \alpha_1t^{3} + \alpha_1^2t^4 + (\beta_1 - \alpha_1^3)t^5 -\alpha_1(\beta_1-\alpha_1^3)t^6 + (\alpha_1^2 (\beta_1 - \alpha_1^3)-\alpha_2)t^7  \\ 
&+&  (2\alpha_1\alpha_2 - \beta_1\alpha_1^3 +\alpha_1^6)t^8 +  (\alpha_2 - \alpha_1^2\alpha_2 -\alpha_1(2\alpha_1\alpha_2 - \beta_1\alpha_1^3 +\alpha_1^6))t^9 + \rho
\end{eqnarray*}
with $\rho \in V$ with $\rmo(\rho) \ge 10$. Therefore
\begin{eqnarray*}
\frac{g^2}{f} = g\xi &=& t^{28} -\alpha_1t^{29} + \alpha_1^2t^{30} +(2\beta_1 - \alpha_1^3) t^{31} + (-(\beta_1 - \alpha_1^3)\alpha_1 - \beta_1 \alpha_1)t^{32} \\
&+& (2\beta_1\alpha_1^2 - \alpha_1^5 -\alpha_2)t^{33} + ((2\alpha_1\alpha_2 - \beta_1\alpha_1^3 +\alpha_1^6) + \beta_1(\beta_1- \alpha_1^3))t^{34} \\
&+& (2\beta_2 -3\alpha_1^2\alpha_2+2\beta_1\alpha_1^4 - \alpha_1^7 - \beta_1^2\alpha_1)t^{35} + \eta,
\end{eqnarray*}
where $\rmo(\eta) \ge 36$. Consequently
$$2\beta_1 = \alpha_1^3\ \ \text{and}\ \  2\beta_2 = 3 \alpha_1^2 \alpha_2 + \beta_1^2 \alpha_1,$$
since $\frac{g^2}{f}=g\xi \in A$. Therefore, if $\ch k = 2$, then $\alpha_1^3 = 0$, so that $\alpha_1 = 0$. 

Suppose that $\ch k \ne 2$. Then, since $$\frac{g^2}{f} = g\xi \equiv t^{28} -\alpha_1t^{29} + \alpha_1^2t^{30}  -\alpha_2t^{33} + (2\alpha_1\alpha_2  + \beta_1^2)t^{34} \ \mod \ t^{36}V$$ and since $t^{34} \in I$ (remember $c-(b-a)=10+a=34$), we have $$t^{28} -\alpha_1t^{29} + \alpha_1^2t^{30}  -\alpha_2t^{33} \in I.$$ 
Notice that $$t^4 f = t^{28} + \alpha_1t^{29} + \alpha_2t^{33} + \alpha_3t^{37}$$ and that $\fkc = t^{36}V \subseteq I$. We then have $2t^{28} + \alpha_1^2t^{30} \in I$ and, writing $$2t^{28} + \alpha_1^2t^{30} = f\varphi + g \psi$$ with $\varphi = \sum_{i=0}^{\infty}\varphi_i t^i, \psi = \sum_{i=0}^{\infty} \psi_i t^i \in A$  ($\varphi_i, \psi_i \in k$) and comparing the coefficients of $t^{28}, t^{29}, t^{30}$, and $t^{33}$ in both sides of the above equation, we have  
$$
\varphi_4 = 2, \ \ \varphi_4 \alpha_1=0, \ \ \psi_4 = \alpha_1^2, \ \ \text{and} \ \ \varphi_8 \alpha_1 + \varphi_4\alpha_2 + \beta_1 \psi_4=0.
$$
Therefore, $\alpha_1 = 0$. Because $2\beta_1 = \alpha_1^3$ and $2\beta_2 = 3 \alpha_1^2 \alpha_2 + \beta_1^2 \alpha_1$, we get $\beta_1 = \beta_2 =0$, whence $\alpha_2=0$. This completes the proof of Theorem \ref{413}. 
\end{proof}

We are ready to prove Theorem \ref{e=4}.

\begin{proof}[{Proof of Theorem {\rm \ref{e=4}}}] 
It suffices to show that the listed ideals are all Ulrich. We will check it, following the grouping of cases given by Theorems \ref{b-a=9} and \ref{413}. Because the proof of Cases (1) and (2) of Theorem \ref{b-a=9} and the proof of Cases (1), (2), (3), and (4) (resp. (5) and (6)) of Theorem \ref{413} are almost the same, we shall consider Case (1) of Theorem \ref{b-a=9} and Cases (1) and (5) of Theorem \ref{413} only.

Case (1) of Theorem \ref{b-a=9}.~Let $f = t^{12}+2\beta t^{17} + \alpha t^{26}$, $g= t^{21} + \beta t^{26}$ where $\alpha, \beta \in k$ such that $\beta \ne 0$ and set $I = (f,g)$. We want to show $I \in \calX_A$. Let $L = v(I)$. Then, $28,32,36 \in L$ since $12 \in L$, while $29,33,34,37,38 \in L$ since $21 \in L$. We have $\beta t^{34} + \beta^2 t^{39} = \beta t^{13}g \in I$, while $\beta t^{34} + \alpha t^{43} = t^{17}f - t^8 g \in I$. Hence, $\beta^2 t^{39} - \alpha t^{43} \in I$, so that $39 \in L$ (since $\beta \ne 0$). Thus, $\fkc = t^{36}V  \subseteq I$, because $36, 37, 38, 39 \in L$. Therefore, we have  $t^{33} \in I$ (resp. $t^{32} \in I$), considering $t^{12}g$ (resp. $t^{20}f$). Because $\beta t^{30} + \alpha t^{39} = t^{13}f - t^4 g \in I$, we get $t^{30} \in I$. Hence, $t^{30}, t^{32}, t^{33}, t^{39} \in I$. Thus, $t^q \in I$ for all $q \in H$ such that $q \ge 30$. Remember that $$\xi =\frac{g}{f} = t^9 - \beta t^{14} +2\beta^2 t^{19} + \rho$$ where $\rho \in V$ with $\rmo(\rho) \ge 22$, and we get
$$\frac{g^2}{f} = g\xi \equiv t^{30} + \beta^2 t^{40}~\mod~t^{43}V.$$ Hence, $\frac{g^2}{f} \in I$, because $t^{30} \in I$ and $t^{36}V \subseteq I$. Thus, $I^2 = fI$. On the other hand, since the $k$-space $A/I$ is spanned by the images of $$1, \ t^4, \ t^8, \ t^{13},\  t^{17},\  t^{26}$$ we get
$\ell_A(A/I) \le 6$, so that 
the epimorphism $$\varphi : A/I \to I/(f),\ \ \varphi(1~\mod~I)=g~\mod~(f)$$
of $A$-modules is an isomorphism, because $\ell_A(I/(f)) \ge 6$ (since $\ell_A(A/I)=12$). Hence, $A/I \cong I/(f)$, so that  $I$ is an Ulrich ideal of $A$ with $\ell_A(A/I)=6$, and the images of the above six monomials form a $k$-basis of $A/I$.

Case (1) of Theorem \ref{413}.~Let $f = t^{4} + {\alpha}t^{13}$ ($\alpha \in k$) and $g= t^{26}$, and set $I = (f, g)$. We want to show $I \in \calX_A$. Firstly, notice that $t^{17} \in I$, since
$$t^{17} = (t^{4}+\alpha t^{13})t^{13} -\alpha t^{26}.$$
Therefore, $t^{37},t^{38},t^{39} \in I$, while $t^{36} \in I$, since $t^{32}f = t^{36} + \alpha t^{45}$ and $t^{45} \in I$. Hence, $\fkc =t^{36}V \subseteq I$. Because $\rmo(\frac{g^2}{f})=48$, we get $g^2 \in f\fkc \subseteq fI$, whence $I^2=fI$. On the other hand, because the $k$-space $A/I$ is spanned by the images of $\{t^q \mid q \in H, q \le 34\}$ and because among them, there are relations induced from the vanishing
$$f = t^{4} + {\alpha}t^{13} \equiv 0~\mod ~I\ \ \text{and}\ \ g=t^{26} \equiv 0 ~\mod~I$$
of $f$ and $g$, the $k$-space $A/I$ is spanned by the images of $1, t^{13}$, whence $\ell_A(A/I) \le 2$. Therefore, the epimorphism
$$\varphi : A/I \to I/(f),\ \ \varphi(1~\mod~I)=g~\mod~(f)$$
of $A$-modules is an isomorphism, because $\ell_A(I/(f) \ge 2$ (since $\ell_A(A/(f)=4$). Thus, $I/(f) \cong A/I$ and $\ell_A(A/I)=2$. Hence, $A/I$ possesses the images of $1, t^{13}$ as a $k$-basis, and  $I$ is an Ulrich ideal of $A$.

Case (5) of Theorem \ref{413}.~Let $$f = t^{20} + \alpha_1 t^{21} + \alpha_2t^{25} + \alpha_3t^{29} \ \ \text{and}\ \ g= t^{26} + \beta t^{29}$$
where $\alpha_1, \alpha_2, \alpha_3, \beta \in k$  and $\alpha_1^3 = 2\beta$. We set $I = (f, g)$ and $L = v(I)$. Then, $20, 26, 33, 39 \in L$. Hence, $36,37,38,39 \in L$, so that $\fkc = t^{36}V \subseteq I$. On the other hand, because 
$$t^{34}= t^8(t^{26}+ \beta t^{29}) -\beta t^{37} \ \ \text{and}\ \ t^{33} = t^{13}\left(t^{20}+\alpha_1 t^{21}+\alpha_2 t^{25} + \alpha_3 t^{29}\right)-(\alpha_1t^{34} + \alpha_2 t^{38} + \alpha_3 t^{42}),$$
we get $t^{34}, t^{33} \in I$. We furthermore have $t^{30}, t^{32}, t^{39} \in I$, because $t^{4}g, t^{12}f, t^{13}g \in I$. Hence, $t^q \in I$ for all $q \in H$ such that $q \ge 30$. Therefore, because $\alpha_1^3 = 2\beta$, we have
$$
\frac{g^2}{f} = g\xi \equiv t^{32} - \alpha_1t^{33} + \alpha_1^2t^{34} \ \mod \ t^{36}V,
$$
which yields that $\frac{g^2}{f} \in I$, whence $I^2 = fI$. Because the $k$-space $A/I$ is spanned by the images of $\{t^q \mid q \in H, q < 30\}$ and because there are relations among them induced by the vanishing
$$f = t^{20} + \alpha_1 t^{21} + \alpha_2t^{25} + \alpha_3t^{29} \equiv 0~\mod ~I\ \ \text{and}\ \ g= t^{26} + \beta t^{29} \equiv 0 ~\mod~I,$$
$A/I$ is spanned by the images of the following ten monomials
$$
1, t^4,t^8,t^{12}, t^{13},t^{16}, t^{17}, t^{21}, t^{25}, t^{29}.
$$
Therefore, $\ell_A(A/I) \le 10$, so that the epimorphism
$$\varphi : A/I \to I/(f),\ \ \varphi(1~\mod~I)=g~\mod~(f)$$
is an isomorphism, because $\ell_A(I/(f) \ge 10$ (since $\ell_A(A/(f)=20$). Thus, $I/(f)$ is a free $A/I$-module, and $A/I$ possesses the images of the above ten monomials as a $k$-basis. In particular, $I$ is an Ulrich ideal of $A$.

Let us check the second assertion of Theorem \ref{e=4}. For example, we consider Case (1) of Theorem \ref{b-a=9}. Let 
\begin{eqnarray*}
f&=&t^{12} + 2\beta  t^{17} + \alpha  t^{26},\\ 
g&=& t^{21} + \beta t^{26},\\
f_1&=&t^{12} + 2\beta_1  t^{17} + \alpha_1 t^{26},\\ 
g_1&=& t^{21} + \beta_1 t^{26}\\
\end{eqnarray*}
where $\alpha, \beta, \alpha_1, \beta_1 \in k$ such that $\beta \ne 0, \beta_1 \ne 0$, and assume that $I =(f,g)=(f_1,g_1)$. Then, since $f - f_1, g-g_1 \in I$ and the images of $1, t^4, t^8, t^{13}, t^{17}, t^{26}$ form a $k$-basis of $A/I$ (as we have shown above), we readily get $\alpha =\alpha_1$ and $\beta = \beta_1$. This argument works also for the proof of the other cases. This completes the proof of Theorem \ref{e=4}.
\end{proof}

 
\begin{remark}
We are able to pinpoint the set $\calX_{k[[t^5, t^{11}]]}$ also. The results on $k[[t^5, t^{11}]]$ are more subtle and the proof is more formidable than those of the case of $k[[t^4,t^{13}]]$, which we shall give on another occasion.
\end{remark}


\section{The case of three-generated numerical semigroup rings}
In this section we explore the semigroup ring of a three-generated numerical semigroup.

Throughout, let $a, b, c \in \Bbb Z$ be positive integers with $\GCD(a, b, c) = 1$. We set $H = \left<a, b, c\right>$ and assume that $H$ is minimally generated by $a,b,c$. Let $k[[t]]$ be the formal power series ring over a field $k$ and $k[[H]] = k[[t^a, t^b, t^c]]$. We set $\m = (t^a, t^b, t^c)$, the maximal ideal of $A=k[[H]]$. We are interested in the size of the set $\calX_A$.

We begin with the case where $H$ is not symmetric. For a given finitely generated $A$-module $M$ let $P_M^A(t)$ stand for the Poincar\'e series 
$$
P_M^A(t) = \sum_{n=0}^{\infty}\beta_n^A(M)t^n \in \Bbb Z[[t]]
$$
of $M$ over $A$, where $\beta_n^A(M)$ denotes the $n$-th Betti number of $M$. With this notation we have the following.

\begin{thm}\label{6.1}
Suppose that $A=k[[H]]$ is not a Gorenstein ring. Then the Betti numbers of the residue class field $A/\m$ of $A$ are given by
$$
\beta_n^A(A/\m)=
\begin{cases}
1 & (n=0)\\
3 \cdot 2^{n-1} & (n>0).
\end{cases}
$$
Hence
$$
P_{A/\m}^A(t) = \frac{1+t}{1-2t}. 
$$
\end{thm}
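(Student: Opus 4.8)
The plan is to descend to the defining ideal of $A$ over a regular local ring and combine the Hilbert--Burch structure with Golod's theorem. Since $H$ is minimally three-generated, $A=k[[t^a,t^b,t^c]]$ has embedding dimension $3$, so I would write $A=S/I$ with $S=k[[x,y,z]]$ regular local of dimension $3$ and $I\subseteq\n^2$ the defining ideal, where $\n$ denotes the maximal ideal of $S$. As $A$ is Cohen--Macaulay of dimension one, $I$ is perfect of grade $2$. Because $H$ is not symmetric, $A$ is not Gorenstein, hence $I$ is not a complete intersection, and by Herzog's structure theorem \cite{Herzog} it is minimally generated by the three maximal minors of a $3\times 2$ matrix $\phi$ whose entries lie in $\n$. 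Thus the minimal $S$-free resolution of $A$ is the Hilbert--Burch complex
$$
0\longrightarrow S^{2}\overset{\phi}{\longrightarrow}S^{3}\longrightarrow S\longrightarrow A\longrightarrow 0,
$$
giving $\beta_0^S(A)=1$, $\beta_1^S(A)=3$, $\beta_2^S(A)=2$, and $\beta_i^S(A)=0$ for $i\ge 3$. Equivalently, writing $K$ for the Koszul complex of $A$ on a minimal generating set of $\m$, one has $\dim_k H_0(K)=1$, $\dim_k H_1(K)=3$, $\dim_k H_2(K)=2$, and $H_i(K)=0$ for $i\ge 3$.

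The heart of the argument is to show that $A$ is a Golod ring, and this is the step I expect to be the main obstacle. I would verify it through Golod's criterion, that the positively graded Koszul homology $H_{\ge 1}(K)$ admits a trivial Massey operation. Since $H_i(K)=0$ for $i\ge 3$, the only binary product that can be nonzero is $H_1(K)\cdot H_1(K)\subseteq H_2(K)$, equivalently the multiplication $\Tor_1^S(A,k)\otimes_k\Tor_1^S(A,k)\to\Tor_2^S(A,k)$; putting the DG-algebra structure on the Hilbert--Burch complex $F_\bullet$, this product is computed from the boundaries $\Delta_ie_j-\Delta_je_i=\partial_2(w_{ij})$ of the minors, and the claim is $w_{ij}\in\n F_2$, which is exactly the statement that a grade-$2$ perfect ideal needing three generators has $\Tor_1\cdot\Tor_1=0$. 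The genuinely delicate point is that vanishing of products is not by itself degree-forced to kill the higher operations, since a triple Massey product of three classes in $H_1(K)$ again lands in $H_2(K)$; one must actually produce the trivial Massey operation. Here I would invoke the low codimension: it is known that a Cohen--Macaulay local ring of codimension two is Golod precisely when it is not a complete intersection, which is our situation, and the cleanest self-contained route is the classification of the Tor-algebra of grade-$2$ perfect ideals together with an explicit trivial Massey operation built from $\phi$ and the syzygy relations $\sum_i\Delta_i\phi_{il}=0$.

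Granting Golodness, the remaining computation is routine. Golod's theorem yields
$$
P_{A/\m}^A(t)=\frac{(1+t)^{3}}{\,1-t\bigl(P^S_A(t)-1\bigr)\,},\qquad P^S_A(t)=1+3t+2t^{2},
$$
so that $t\bigl(P^S_A(t)-1\bigr)=3t^{2}+2t^{3}$ and, factoring the denominator as $1-3t^{2}-2t^{3}=(1+t)^{2}(1-2t)$,
$$
P_{A/\m}^A(t)=\frac{(1+t)^{3}}{(1+t)^{2}(1-2t)}=\frac{1+t}{1-2t}=(1+t)\sum_{n\ge 0}2^{n}t^{n}.
$$
Reading off coefficients gives $\beta_0^A(A/\m)=1$ and $\beta_n^A(A/\m)=2^{n}+2^{n-1}=3\cdot 2^{n-1}$ for $n>0$, as asserted. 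I would close by noting that Golodness may equally be read off a posteriori: the displayed series meets the upper bound of Serre's inequality, whose attainment is the defining equality for Golod rings, so the computation of the Betti numbers and the Golod property are two faces of the same statement.
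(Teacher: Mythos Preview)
Your argument is correct but inverts the paper's logic. The paper proves the theorem by a direct, self-contained computation: after Herzog's presentation it reduces modulo the regular element $t^a$ to the Artinian ring $B=k[Y,Z]/(Y^{\beta+\beta'},Y^{\beta'}Z^\gamma,Z^{\gamma+\gamma'})$, uses $P^B_{B/(y,z)}(t)=P^A_{A/\m}(t)/(1+t)$, and then explicitly writes down the first few differentials $\Bbb M_0,\Bbb M_1,\Bbb M_2,\Bbb M_3$ of the minimal $B$-free resolution of $B/(y,z)$, checking by hand that $\Ker\Bbb M_1=\Im\Bbb M_2$ and that $\Bbb M_3$ is block-built from copies of $\Bbb M_0$ and $\Bbb M_1$; this recursion forces $\beta_n^B(B/(y,z))=2^n$ and yields the series. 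Golodness appears only afterward, as Corollary~\ref{4.2}, by matching the computed series against Serre's bound.

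You instead take Golodness as the input, invoking the known fact that a Cohen--Macaulay local ring of embedding codimension two which is not a complete intersection is Golod, and then read the Betti numbers off Golod's formula. This is shorter and conceptually clean, and your identification of the trivial Massey operation as the genuine content---triple products of $H_1$-classes still land in the nonzero $H_2$, so vanishing of binary products alone does not suffice---is accurate. The trade-off is that the paper's route is elementary and needs no DG-algebra machinery or external structural theorems, whereas yours imports a result whose proof is of comparable weight to the statement being established; indeed the paper is organised so that the explicit Betti computation \emph{reproves} that Golodness result rather than consuming it. Both arguments are valid; they simply allocate the work to opposite ends.
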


\begin{proof}
Let $k[[X,Y,Z]]$ be the formal power series ring over $k$ and let $\varphi: k[[X,Y,Z]] \to A$ be the $k$-algebra map defined by $\varphi(X)=t^a, \varphi(Y)=t^b$, and $\varphi(Z) = t^c$. Then, since $A$ is not a Gorenstein ring, thanks to \cite[Section III]{Herzog}, we have an isomorphism 
$$
A \cong k[[X, Y, Z]]/{\rmI}_2
\left(\begin{smallmatrix}
X^{\alpha} & Y^{\beta} & Z^{\gamma} \\
Y^{\beta'} &  Z^{\gamma'} & X^{\alpha'} 
\end{smallmatrix}\right)
$$
of $k$-algebras for some positive integers $\alpha, \beta, \gamma, \alpha', \beta', \gamma'$, where ${\rmI}_2\left(\begin{smallmatrix}
X^{\alpha} & Y^{\beta} & Z^{\gamma} \\
Y^{\beta'} &  Z^{\gamma'} & X^{\alpha'} 
\end{smallmatrix}\right)$ denotes the ideal generated by the $2\times 2$-minors of the matrix $\left(\begin{smallmatrix}
X^{\alpha} & Y^{\beta} & Z^{\gamma} \\
Y^{\beta'} &  Z^{\gamma'} & X^{\alpha'} 
\end{smallmatrix}\right)$. Hence 
$$
A/(t^a) \cong k[Y, Z]/{\rmI}_2
\left(\begin{smallmatrix}
0 & Y^{\beta} & Z^{\gamma} \\
Y^{\beta'} &  Z^{\gamma'} & 0
\end{smallmatrix}\right)
= k[Y, Z]/(Y^{\beta + \beta'}, Y^{\beta'}Z^{\gamma}, Z^{\gamma+\gamma'})
$$
as a $k$-algebra. Let
$$
B = k[Y, Z]/(Y^{\beta + \beta'}, Y^{\beta'}Z^{\gamma}, Z^{\gamma+\gamma'})
$$
and let $y, z$ denote the images of $Y, Z$ in $B$, respectively. Then, because
$$P_{B/(y, z)}^B(t) = \displaystyle\frac{P^A_{A/\m}(t)}{1 + t},$$
we readily get $P^A_{A/\m}(t) = \frac{1+t}{1-2t}$,
once we have $$P_{B/(y, z)}^B(t) =\frac{1}{1-2t}=1 + 2t + 4t^2 + \cdots + 2^nt^n + \cdots.$$ To see it, we consider the minimal $B$-free resolution of $B/(y, z)$. It is straightforward to check that
$$
B^{\oplus 4} \overset{\Bbb M_1}{\longrightarrow} B^{\oplus 2} \overset{\Bbb M_0}{\longrightarrow} B \overset{\varepsilon}{\longrightarrow} B/(y, z) \longrightarrow 0  
$$
forms a part of minimal $B$-free resolution of $B/(y, z)$, where $\varepsilon$ is the canonical epimorphism, $${\Bbb M}_0 = (y \ z), \ \ \text{and} \ \ {\Bbb M}_1 = \left(\begin{smallmatrix}
y^{\beta + \beta' -1} & y^{\beta'-1}z^{\gamma} & 0 & z \\
0 & 0 & z^{\gamma + \gamma'-1} & -y 
\end{smallmatrix}\right).$$  We want to know the remainder part of the resolution. To do this, firstly let $\alpha \in \Ker {\Bbb M_1}$ and write $\alpha = {}^t(\alpha_1, \alpha_2, \alpha_3, \alpha_4)$ with $\alpha_i \in B$, where ${}^t(*)$ denotes the transpose. Then, since $$\alpha_3 z^{\gamma + \gamma' -1} - \alpha_4 y=0 \ \ \text{in}\ \ B,$$ we have
$$
\alpha_3= yf_4 + zf_3 \ \ \text{and}\ \ \alpha_4=z^{\gamma + \gamma'-1}f_4 - y^{\beta + \beta'-1}f_1 -y^{\beta'-1}z^{\gamma}f_2
$$
for some $f_1, f_2, f_3, f_4 \in B$, 
so that 
$$
\alpha_1 y^{\beta + \beta'-1}  + \alpha_2y^{\beta'-1}z^{\gamma} + (z^{\gamma + \gamma'-1}f_4 - y^{\beta + \beta'-1}f_1 -y^{\beta'-1}z^{\gamma}f_2)z = 0.
$$
Hence
$$
\alpha_1-f_1z - g_1 y = g_4z^{\gamma} \ \ \text{and} \ \ g_2y + g_3 z^{\gamma'}-\alpha_2 + g_2z = g_4y^{\beta}
$$
for some $g_1, g_2, g_3, g_4 \in B$, which shows that $\alpha = {}^t(\alpha_1, \alpha_2, \alpha_3, \alpha_4)$ is contained in the $B$-submodule of $B^4$ generated by the columns of the matrix
$$
{\Bbb M}_2 =
\left(\begin{smallmatrix}
y & z & 0 & 0 & 0 & 0 & 0 & 0\\
0 & 0 & y & z & 0 & 0 & 0 & 0\\
0 & 0 & 0 & 0 & y & z & 0 & 0\\
0 & -y^{\beta+\beta'-1} & 0 & -y^{\beta'-1}z^{\gamma} & z^{\gamma+ \gamma' -1} & 0 & y^{\beta+\beta'-1}z^{\gamma-1} & y^{\beta'-1}z^{\gamma+ \gamma'-1} 
\end{smallmatrix}\right).
$$
Hence ${\Ker \Bbb M}_1 = \Im {\Bbb M}_2$. Next, let $\alpha \in {\Ker \Bbb M}_2$ and write $\alpha = {}^t(\alpha_1, \alpha_2, \ldots, \alpha_8)$ with $\alpha_i \in B$. Then, because $\alpha_1 y + \alpha_2z=0$, $\alpha_3 y + \alpha_4z=0$, and $\alpha_5 y + \alpha_6z=0$, we get
$$
\left(\begin{smallmatrix}
\alpha_1 \\
\alpha_2
\end{smallmatrix}\right), 
\left(\begin{smallmatrix}
\alpha_3 \\
\alpha_4
\end{smallmatrix}\right), 
\left(\begin{smallmatrix}
\alpha_5 \\
\alpha_6
\end{smallmatrix}\right)\in \Im {\Bbb M}_1
$$
whence $\alpha_2, \alpha_4, \alpha_6 \in (z^{\gamma+ \gamma'-1}, y)$. 
This implies $\alpha_2 y^{\beta+ \beta'-1} + \alpha_4y^{\beta'-1}z^{\gamma} - \alpha_5z^{\gamma+ \gamma' -1}=0$ in $B$, so that 
$\alpha_7 y^{\beta+ \beta'-1}z^{\gamma-1} + \alpha_8y^{\beta'-1}z^{\gamma + \gamma' -1}=0$, whence 
$$
\alpha_7 - f_1y - f_3z = z^{\gamma'}f_5 \ \ \text{and} \ \ \alpha_8 - f_4 y - f_2z = -y^{\beta}f_5
$$
for some $f_1, f_2, f_3, f_4, f_5 \in B$. Thus 
$$
\left(\begin{smallmatrix}
\alpha_7 \\
\alpha_8
\end{smallmatrix}\right)
\in \left<
\left(\begin{smallmatrix}
y \\
0
\end{smallmatrix}\right), 
\left(\begin{smallmatrix}
z \\
0
\end{smallmatrix}\right), 
\left(\begin{smallmatrix}
0 \\
y
\end{smallmatrix}\right), 
\left(\begin{smallmatrix}
0 \\
z
\end{smallmatrix}\right)
\right>,
$$
which guarantees that $\alpha$ is contained in the $B$-submodule of $B^{14}$ generated by the columns of the matrix
$$
{\Bbb M}_3
=
\left(\begin{smallmatrix}
{\Bbb M}_1 &  & && \\
& {\Bbb M}_1 & && \\
& &  {\Bbb M}_1 & & \\
& & &  {\Bbb M}_0   &\\ 
& & &  & {\Bbb M}_0   \\ 
\end{smallmatrix}\right).
$$
Consequently, the sequence 
$$
B^{\oplus 16} \overset{{\Bbb M}_3}{\longrightarrow} B^{\oplus 8} \overset{{\Bbb M}_2}{\longrightarrow} B^{\oplus 4} \overset{{\Bbb M}_1}{\longrightarrow} B^{\oplus 2} \overset{{\Bbb M}_0}{\longrightarrow} B 
 \overset{\varepsilon}{\longrightarrow}
B/(y, z) \longrightarrow 0 
$$
forms a part of the minimal $B$-free resolution of $B/(y, z)$. Since the matrix ${\Bbb M}_3$ consists of submatrices ${\Bbb M}_0$ and ${\Bbb M}_1$, the Poincar\'e series of $B/(y, z)$ has the form
$$
P_{B/(y, z)}^B(t) = 1 + 2t + 4t^2 + \cdots + 2^nt^n + \cdots
$$
as claimed. 
\end{proof}

For a moment let $A$ be an arbitrary Noetherian local ring $A$ with the maximal ideal $\m$, the Poincar\'e series $P^A_{A/\m}(t)$ of $A/\m$ over $A$ is not necessarily a rational function (\cite{A}). On the other hand,  J.-P. Serre proved that $P^A_{A/\m}(t)$ is coefficientwise bounded above by the rational series
$$
\frac{(1+t)^n}{1- \sum_{i \ge 1} \ell_A(\rmH_i(x_1, x_2, \ldots, x_n; A))t^{i+1}},
$$
where $x_1, x_2, \ldots, x_n$ ($n = \mu_A(\m)$) is a minimal system of generators of $\m$ and $\rmH_i(x_1, x_2, \ldots, x_n; A)$ denotes the $i$-th Koszul homology module of $A$ with respect to the sequence $x_1, x_2, \ldots, x_n$. With this notation, $A$ is called a {\it Golod} ring, if $P^A_{A/\m}(t)$ coincides with the upper bound given by Serre.

The following result is known (see, e.g., \cite{GPW}) and it is equivalent to the assertion of Theorem \ref{6.1}. Here let us give it as a direct consequence of the proposition.

\begin{cor}[{cf., e.g., \cite{GPW}}]\label{4.2}
Every three-generated non-Gorenstein numerical semigroup ring $k[[t^a,t^b,t^c]]$ is Golod. 
\end{cor}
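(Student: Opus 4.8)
The plan is to verify directly that the Poincar\'e series $P^A_{A/\m}(t)$ supplied by Theorem \ref{6.1} meets Serre's upper bound, which is exactly what Golodness demands. Since $A$ is non-Gorenstein and three-generated, its embedding dimension is $n=\mu_A(\m)=3$, so Serre's series takes the form
$$
\frac{(1+t)^3}{1-\sum_{i\ge 1}\ell_A(\rmH_i(t^a,t^b,t^c;A))\,t^{i+1}},
$$
and the entire problem reduces to computing the lengths of the Koszul homology modules $\rmH_i=\rmH_i(t^a,t^b,t^c;A)$.

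The first step is to identify these homologies with $\Tor$. Writing $R=k[[X,Y,Z]]$ and $A=R/\rmI_2(M)$ for a $2\times 3$ matrix $M$ over $R$ as in the proof of Theorem \ref{6.1}, the images $t^a,t^b,t^c$ of $X,Y,Z$ minimally generate $\m$; and since the Koszul complex $\K_\bullet^R(X,Y,Z)$ is the minimal $R$-free resolution of $k$, we have $\rmH_i(t^a,t^b,t^c;A)\cong\Tor_i^R(A,k)$. As the complex computing this $\Tor$ is minimal, $\ell_A(\rmH_i)=\dim_k\Tor_i^R(A,k)=\beta_i^R(A)$, the $i$-th Betti number of $A$ over $R$.

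These Betti numbers I would read off from the structure theorem already invoked above. Because $A$ is non-Gorenstein, Herzog's presentation exhibits the defining ideal as the ideal of $2\times 2$ minors of $M$, so it is minimally generated by exactly three elements and $A$ is a grade-two perfect (codimension-two Cohen--Macaulay) quotient of $R$. The Hilbert--Burch theorem then yields the minimal $R$-free resolution
$$
0\to R^{2}\to R^{3}\to R\to A\to 0,
$$
so that $\beta_0^R(A)=1$, $\beta_1^R(A)=3$, $\beta_2^R(A)=2$, and $\beta_i^R(A)=0$ for $i\ge 3$. Hence $\ell_A(\rmH_1)=3$ and $\ell_A(\rmH_2)=2$, with $\rmH_i=0$ for $i\ge 3$.

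Finally I would substitute and simplify: the denominator of Serre's series becomes $1-3t^2-2t^3=(1+t)^2(1-2t)$, so the bound equals $\frac{(1+t)^3}{(1+t)^2(1-2t)}=\frac{1+t}{1-2t}$, which is precisely the value of $P^A_{A/\m}(t)$ computed in Theorem \ref{6.1}. Thus the Serre bound is attained and $A$ is Golod. I expect no real obstacle beyond this bookkeeping; the one point deserving care is that the three minors form a \emph{minimal} generating set of the defining ideal, and this is guaranteed by the non-Gorenstein hypothesis, since a grade-two perfect ideal is a complete intersection---equivalently its quotient is Gorenstein---exactly when it needs only two generators.
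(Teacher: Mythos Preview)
Your proof is correct and follows essentially the same route as the paper: both invoke Theorem \ref{6.1} for $P^A_{A/\m}(t)=\frac{1+t}{1-2t}$, both read off the Betti numbers $\beta^S_i(A)$ from the Hilbert--Burch resolution $0\to S^2\to S^3\to S\to A\to 0$, and both verify the identity $\frac{1+t}{1-2t}=\frac{(1+t)^3}{1-3t^2-2t^3}$. The only cosmetic difference is that the paper packages the right-hand side as $\frac{P^S_{S/\n}(t)}{1-t(P^S_A(t)-1)}$ and cites the notion of Golod homomorphism from \cite{Avramov}, whereas you work directly with Serre's bound; since $S$ is regular the two formulations coincide, so no extra content is involved either way.
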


\begin{proof}
Let $S=k[[X, Y, Z]]$ be the formal power series ring over $k$. Then, the $S$-module $A$ has a minimal free resolution
$$ 0 \to S^2 \overset{\left(\begin{smallmatrix}
X^{\alpha}&Y^{\beta'}\\
Y^{\beta}&Z^{\gamma'}\\
Z^{\gamma}&X^{\alpha'}\\
\end{smallmatrix}\right)}
{\longrightarrow}S^3 \to S \to A \to 0,$$
whence Theorem \ref{6.1} tells us that
$$
P_{A/\m}^A(t) = \frac{1+t}{1-2t} = \frac{(1+t)^3}{1-3t^2-2t^3} = \frac{P_{S/\n}^S(t)}{1- t\cdot (P_{A}^S(t)-1)},
$$ 
where $\n=(X, Y, Z)$ denotes the maximal ideal of $S$. Therefore, the natural surjection  $S \to A$ is a Golod homomorphism, so that $A$ is a Golod ring (\cite[Remark, page 32]{Avramov}).
\end{proof}

We say that a Noetherian local ring $A$ is {\it $G$-regular}, if every totally reflexive $A$-module is free (\cite{greg}), and \cite[Example 3.5]{AM} guarantees that every Golod local ring which is not a hypersurface must be $G$-regular. Consequently, we readily get the following.

\begin{cor}\label{4.3}
Every three-generated non-Gorenstein numerical semigroup ring $k[[t^a, t^b, t^c]]$ contains no Ulrich ideals generated by two elements.
\end{cor}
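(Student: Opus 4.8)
The plan is to obtain Corollary \ref{4.3} as an immediate consequence of Corollary \ref{4.2} and the theory of G-regular rings. I would argue by contradiction, supposing that the three-generated non-Gorenstein ring $A = k[[t^a, t^b, t^c]]$ possesses an Ulrich ideal $I$ with $\mu_A(I) = 2$. Since $A$ is a one-dimensional Cohen-Macaulay local domain, such a two-generated Ulrich ideal is totally reflexive, as recalled in the introduction following Definition \ref{2.1} (see \cite[Proposition 4.6]{GIT}). Moreover $I$ is not free: as $A$ is a domain, a nonzero ideal has rank one, so a free $I$ would be principal, contradicting $\mu_A(I) = 2$---equivalently, Condition $(1)$ in Definition \ref{2.1} forces $I \ne Q$, so $I$ is non-principal. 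Hence $I$ is a non-free totally reflexive $A$-module, and $A$ is not G-regular.

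On the other hand, Corollary \ref{4.2} asserts that $A$ is a Golod ring. Because $H$ is minimally generated by three elements, $A$ has embedding dimension three and is therefore not a hypersurface; this is also forced by the non-Gorenstein hypothesis, hypersurfaces being Gorenstein. By \cite[Example 3.5]{AM}, a Golod local ring that is not a hypersurface is G-regular, so $A$ is G-regular. This contradicts the previous paragraph, and the corollary follows.

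I expect no genuine obstacle here: the substantive work has already been carried out in Corollary \ref{4.2}, and the present statement is a bookkeeping assembly of established facts. The only point worth verifying carefully is that a two-generated Ulrich ideal in a one-dimensional domain is both non-free and totally reflexive, and both of these are routine in the setting at hand.
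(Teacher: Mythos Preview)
Your proof is correct and follows essentially the same approach as the paper: both deduce from Corollary \ref{4.2} and \cite[Example 3.5]{AM} that $A$ is G-regular (being Golod and not a hypersurface), and then invoke the fact from the introduction that a two-generated Ulrich ideal in this setting is a non-free totally reflexive module.
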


When $H=\left<a,b,c\right>$ is symmetric, few things are known about the size of $\calX_{k[[H]]}$. Closing this paper, let us note a part of them.

First of all, remember that $H$ is symmetric if and only if $k[[H]]$ is a complete intersection (\cite[Theorem 3.10]{Herzog}, \cite[Corollary 10.5]{RG}). If $H$ is symmetric, then as is partially stated in the proof of Proposition \ref{5.1} (2), $H$ is obtained by a gluing of a two-generated numerical semigroup $H'$ and $\Bbb N$ (\cite[Section 3]{Herzog}, \cite[Proposition 3]{W}). Let us explain more precisely, preparing new notation.

Let $\alpha, \beta \in \Bbb Z$ be positive integers such that $\GCD(\alpha, \beta) = 1$. We set $H' = \left<\alpha, \beta \right>$ and assume that $\mu(H') = 2$. Choose $a \in H'$ and $b \in \Bbb  N$ so that $a,b$ satisfy  the conditions that $a > 0$, $b > 1$, $a \not\in \{\alpha, \beta\}$, and $\GCD(a, b) = 1$.  Hence,  $\GCD(b\alpha, b\beta, a) = 1$. We consider the numerical semigroup $
H = \left<b\alpha, b\beta, a \right>,
$
and call it the {\it gluing} of $H'$ and $\Bbb N$ with respect to $a \in H'$ and $b \in \Bbb N$. Notice that $H = \left<b\alpha, b\beta, a \right>$ is actually symmetric. In fact, let $a = \alpha \ell + \beta m$ with integers $\ell, m \ge 0$. Let $k[[X, Y, Z]]$ be the formal power series and consider the $k$-algebra map 
$\varphi : k[[X, Y, Z]] \to V$
defined by $$\varphi(X) = t^{b \alpha}, \ \ \varphi(Y) = t^{b \beta}, \ \ \text{and}\ \ \varphi(Z) = t^{a}.$$ We then have the isomorphism 
$$
A \cong k[[X, Y, Z]]/(X^{\beta} - Y^{\alpha}, Z^{b} - X^{\ell}Y^m)
$$
of $k$-algebras. Every symmetric three-generated numerical semigroup is obtained by gluing some $H'$ and $\Bbb N$ with respect to suitable $a \in H'$ and $b \in \Bbb N$.

Assume that our symmetric semigroup $H$ has the form $H=\left<b \alpha, b\beta, a \right>$ stated above. We then have  the following. 
 
\begin{prop}\label{5.6}
Suppose that one of the following conditions is satisfied.
\begin{enumerate}[$(1)$]
\item $b$ is even and $\ell \ge 2$.
\item $b$ is even and $m \ge 2$.
\item either $\alpha$ or $\beta$  is even. 
\end{enumerate}
Then $A$ admits at least one Ulrich ideal.
\end{prop}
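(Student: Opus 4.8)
Since $H$ is symmetric, $A=k[[H]]$ is a Gorenstein local ring of dimension one, so every Ulrich ideal of $A$ is two-generated and it suffices to produce a single one. My plan is to reduce the whole statement to one ring-theoretic mechanism and then to feed each of the three cases into it.

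\textbf{The reduction.} I would first isolate the following fact: \emph{if $\xi\in V$ satisfies $\xi\notin A$ and $\xi^{2}\in A$, and if the overring $B=A[\xi]$ is a Gorenstein ring, then $I:=A:_A B$ is a two-generated Ulrich ideal of $A$.} Indeed $\xi^{2}\in A$ forces $B=A+A\xi$, so $\mu_A(B)=2$; since $A$ is Gorenstein one has $\Hom_A(B,A)=A:_AB=I$ as a fractional ideal, so $I$ is a canonical module of $B$. As $B$ is Gorenstein, $I=\theta B$ is $B$-free of rank one, and because $I\subseteq A$ with $1\in B$ we get $\theta\in I\subseteq A$; thus $I=(\theta,\theta\xi)$ is two-generated, $I\neq(\theta)$, and $I^{2}=\theta^{2}B=\theta I$. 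Finally $\ell_A(A/I)=\rmo(\theta)-\ell_A(B/A)$, while Gorenstein duality for finite length modules gives $\ell_A(A/I)=\ell_A(A/(A:_AB))=\ell_A(B/A)$; combining these yields $\rmo(\theta)=2\ell_A(A/I)$ and $I/(\theta)\cong A/I$, so $I$ is Ulrich by Theorem~\ref{2.8} and Definition~\ref{2.1}. This step is clean and reduces everything to producing, in each case, an element $\xi$ with $\xi\notin A$, $\xi^{2}\in A$ and $A[\xi]$ Gorenstein.

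\textbf{The elements $\xi$.} Using $A\cong k[[X,Y,Z]]/(X^{\beta}-Y^{\alpha},Z^{b}-X^{\ell}Y^{m})$, I would take in valuation a ``square root of a monomial of $A$''. In case (3), with $\alpha=2\alpha_0$ (resp. $\beta=2\beta_0$), set $\xi=t^{b\alpha/2}$ (resp. $\xi=t^{b\beta/2}$), so that $\xi^{2}=t^{b\alpha}$ (resp. $t^{b\beta}$) is a generator of $A$. In case (1), writing $b=2b_0$, set $\xi=t^{ab_0-b\alpha}$: the exponent is non-negative precisely because $a=\alpha\ell+\beta m\ge2\alpha$ when $\ell\ge2$, and then $\xi^{2}=t^{b\alpha(\ell-2)+b\beta m}=X^{\ell-2}Y^{m}\in A$. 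In case (2) set symmetrically $\xi=t^{ab_0-b\beta}$, with $\xi^{2}=X^{\ell}Y^{m-2}\in A$, the inequality $a\ge2\beta$ now coming from $m\ge2$. Thus the hypotheses $\ell\ge2$ and $m\ge2$ are exactly what guarantee $\xi\in V$. In every case $\xi^{2}\in A$, and a short argument from $\GCD(a,b)=1$ and $\GCD(\alpha,\beta)=1$ shows $\rmo(\xi)\notin H$, i.e. $\xi\notin A$.

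\textbf{Gorensteinness of $B$ --- the main point.} It remains to prove that $B=A[\xi]$ is Gorenstein. Since $B=k[[t^{b\alpha},t^{b\beta},t^{a},t^{d}]]$ with $d=\rmo(\xi)$ and $2d\in H$, its value semigroup is $H_{1}=\left<b\alpha,b\beta,a,d\right>=H\cup(d+H)$, and $B$ is Gorenstein if and only if $H_{1}$ is symmetric. The approach I would take is to exhibit $H_{1}$ explicitly as a \emph{gluing}, so that $k[[H_{1}]]$ is a complete intersection and hence Gorenstein. For case (3), $t^{b\beta}=(t^{b\beta_0})^{2}$ gives $B=k[[t^{b\alpha},t^{b\beta_0},t^{a}]]$, and $H_{1}=\left<b\alpha,b\beta_0,a\right>$ is precisely the gluing of $\left<\alpha,\beta_0\right>$ and $\Bbb N$ with respect to $a$ and $b$ (note $\GCD(\alpha,\beta_0)=1$ and $a\in\left<\alpha,\beta_0\right>$), hence symmetric; the case $\alpha$ even is identical. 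For cases (1) and (2) the same idea applies: when $b=2$ one finds $H_{1}=\left<b\alpha,b\beta,d\right>$, the gluing of $\left<\alpha,\beta\right>$ and $\Bbb N$ with respect to $d$ and $b$, and for general even $b$ one must rewrite $H_{1}$ as the gluing of a suitable two-generated semigroup with $\Bbb N$. I expect this last bookkeeping to be the main obstacle: the naive presentation $\left<b\alpha,b\beta,d\right>$ ceases to be a numerical semigroup once $b>2$ (its generators acquire the common factor $b/2$), so it must be replaced by a genuinely different gluing presentation, and finding such a presentation uniformly in $b$, $\ell$, $m$ is the delicate part. Once $H_{1}$ is shown to be symmetric in each case, the reduction above delivers the Ulrich ideal $I=A:_AB$ and completes the proof.
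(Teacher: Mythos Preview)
Your reduction step is correct and, in fact, the overring $B=A[\xi]$ you construct is precisely the blowup $A^{I}$ of the Ulrich ideal the paper writes down, so the two approaches are dual to one another and produce the same ideal $I=A:_AB$. The gap you flag is genuine: for cases (1) and (2) with $b>2$ you have not established that $B$ is Gorenstein, and the gluing presentation of $H_1$ you sketch does break down. The gap can be closed, but not along the semigroup route you propose; instead one should present $B$ directly as a ring. Writing $\xi=z^{b_0}/x$ in case~(1), one has the relations $x\xi=z^{b_0}$ and $\xi^{2}=x^{\ell-2}y^{m}$, and the original relation $z^{b}-x^{\ell}y^{m}$ becomes $(x\xi)^{2}-x^{2}\xi^{2}=0$, hence redundant. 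Thus
\[
B\ \cong\ k[[X,Y,Z,W]]\big/\bigl(X^{\beta}-Y^{\alpha},\ XW-Z^{b_0},\ W^{2}-X^{\ell-2}Y^{m}\bigr),
\]
a one-dimensional quotient of a four-dimensional regular local ring by three elements, i.e.\ a complete intersection; hence $B$ is Gorenstein. Case~(2) is symmetric in $X$ and $Y$.

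That said, the paper's argument is dramatically shorter. Rather than passing to $B$ and returning via duality, it simply \emph{names} the ideal---$(x,z^{b/2})$ in case~(1), $(y,z^{b/2})$ in case~(2), and $(x,y^{\alpha/2})$ or $(y,x^{\beta/2})$ in case~(3)---and checks the Ulrich conditions by a one-line colon computation in the complete-intersection presentation. For case~(1), for instance, since $\ell\ge 2$ one has $(X,\,X^{\beta}-Y^{\alpha},\,Z^{b}-X^{\ell}Y^{m})=(X,Y^{\alpha},Z^{b})$ in $k[[X,Y,Z]]$, and $(X,Y^{\alpha},Z^{b}):Z^{b/2}=(X,Y^{\alpha},Z^{b/2})$ immediately gives $(x):_A I=I$; moreover $A/I\cong k[[Y]]/(Y^{\alpha})$ is Gorenstein, so $I$ is Ulrich by \cite[Corollary~2.6(b)]{GOTWY}. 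Your approach is a nice illustration of the correspondence between two-generated Ulrich ideals and Gorenstein double covers, but for this particular proposition it is working much harder than necessary.
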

 
\begin{proof} We identify $A=k[[X, Y, Z]]/(X^{\beta} - Y^{\alpha}, Z^{b} - X^{\ell}Y^m)$ and let $x$, $y$, and $z$ respectively denote the images of $X$, $Y$, and $Z$ in $A$. We set $I_1 =(x, z^{\frac{b}{2}})$ for Case (1), $I_2 = (y, z^{\frac{b}{2}})$ for Case (2), $I_3= (x, y^{\frac{\alpha}{2}})$ for Case (3) if $\alpha$ is even, and $I = (x, y^{\frac{\beta}{2}})$ for Case (3) if $\beta$ is even. Then, these ideals are Ulrich ideals of $A$. Let us check Case (1) only. The others are similarly proved.

$(1)$~Since $z^{b} = x^{\ell}y^m$ and $\ell \ge 2$, $I^2 = xI + (z^{b)} = xI$. Notice that $$(X, X^{\beta} - Y^{\alpha}, Z^{b} - X^{\ell}Y^m) = (X, Y^{\alpha}, Z^{b}).$$ We then have $(x):_AI = I$, because $$(X, Y^{\alpha}, Z^{b}):_{k[[X,Y,Z]]}Z^{\frac{{b}}{2}} = (X, Y^{\alpha}, Z^{\frac{{b}}{2}}),$$ so that $I$ is a good ideal of $A$ in the sense of \cite{GIW}.  Therefore, $I \in \calX_A$ by \cite[Corollary 2.6 (b)]{GOTWY}, because $A/I \cong k[[Y]]/(Y^{\alpha})$ is a Gorenstein ring.
\end{proof}

For example, choose an even integer $\ell \ge 8$ so that $\operatorname{GCD}(3,\ell)=1$ and set $H'=\left<3, \ell\right>$. Let $a \in H'$ and $b \in \Bbb  N$ such that $a > 0$, $b > 1$, $a \not\in \{3, \ell\}$, and $\GCD(a, b) = 1$. Then, thanks to Theorem \ref{e=3} and Proposition \ref{5.6}, the semigroup ring $k[[t^{3b}, t^{b\ell}, t^{a}]]$ of $H=\left<3b, b\ell, a\right>$ admits the Ulrich ideal $I = (t^{3b}, t^{\frac{b\ell^2}{2}})$. We have $\calX_{k[[t^8,t^9,t^{15}]]} = \emptyset$ and the symmetric semigroup $H=\left<8,9,15 \right>$ does not satisfy any one of the conditions stated in Proposition \ref{5.6}.  On the other hand, as for $H = \left<8, 15, 25\right>$ or $H = \left<8, 21, 35\right>$, we are sure that the semigroup ring $A=k[[H]]$ contains no Ulrich ideals generated by monomials in $t$, although we don't know whether $\calX_A=\emptyset$ or not. Thus, at this moment, for a general three-generated symmetric semigroup $H$ the answer to the question of how large the set $\calX_{k[[H]]}$ is remains open rather far from comprehension.

\begin{ac}
{\rm The authors are grateful to S. Iai and N. Matsuoka for their helpful and simultaneous discussions during this research. The authors are grateful also to H. Matsui for his valuable advice, which has led the authors to Corollary \ref{4.2}.}
\end{ac}


\end{document}